\newtheorem{theorem}{Theorem}
\newtheorem{lemma}[theorem]{Lemma}
\theoremstyle{definition}
\newtheorem{definition}[theorem]{Definition}
\numberwithin{theorem}{section}
\newcommand{\TheTitle}{Arnold-Winther Mixed Finite Elements for Stokes Eigenvalue Problems} 
\newcommand{\TheShortTitle}{Arnold-Winther MFEM for Stokes Eigenvalue Problems} 
\newcommand{\TheAuthors}{J. Gedicke, A. Khan}
\DeclareMathOperator{\bmdiv}{\mathbf{div}}
\DeclareMathOperator{\ddiv}{div}
\newcommand{\figurewidtha}{0.48}
\newcommand{\figurewidthb}{0.48}
\title{{\TheTitle}%
\thanks{
The first author has been funded by the Austrian Science Fund (FWF) 
through the project P 29197-N32.
The second author has been funded
by the Mathematics Center Heidelberg (Match) at University of Heidelberg, Germany.}}
\author{
  Joscha Gedicke\thanks{ Faculty of Mathematics, University of Vienna, 1090 Vienna, Austria
    (\mbox{joscha.gedicke@univie.ac.at}).}
  \and
  Arbaz Khan\thanks{Corresponding author: 
 School of Mathematics,  University of Manchester, 
M13 9PL Manchester, UK (\mbox{arbaz.khan@manchester.ac.uk}).}
}
\date{}
\begin{document}

\maketitle

\begin{abstract}
This paper is devoted to study the Arnold-Winther mixed finite element method 
for two dimensional Stokes eigenvalue problems using the stress-velocity formulation. 
A~priori error estimates for the eigenvalue and eigenfunction errors are presented. 
To improve the approximation for both eigenvalues and eigenfunctions, we propose a local post-processing. 
With the help of the local post-processing, we derive a reliable a~posteriori error estimator which is shown
to be empirically efficient.
We confirm numerically the proven higher order convergence of the post-processed eigenvalues for convex domains
with smooth eigenfunctions.
On adaptively refined meshes we obtain numerically optimal higher orders of convergence of the post-processed eigenvalues 
even on nonconvex domains.
\end{abstract}

{\small\noindent\textbf{Keywords}
a priori analysis, a posteriori analysis, Arnold-Winther finite element, mixed finite element, Stokes eigenvalue problem

\noindent
\textbf{AMS subject classification}
65N15,   	
65N25,   	
65N30   	
}

\section{Introduction}
\label{intro}
Over the last decade, the numerical analysis of the finite element method for eigenvalue problems
has been of increasing interest because of various practical applications.
Specifically, the numerical analysis of the Stokes eigenvalue problem is a broad research area. 
Huang et al. \cite{PHYHXF} discuss the numerical analysis
of several stabilized finite element methods for the Stokes eigenvalue problem. In \cite{MSDMRR}, Meddahi et al. proposed a finite 
element analysis of a pseudo-stress formulation for the Stokes eigenvalue problem. In \cite{OTDBRC}, T\"{u}rk et al. introduced a 
stabilized finite element method for two-field and three-field Stokes eigenvalue problems.
From \cite{BJJP}, one can also study a variety of mixed or hybrid finite element methods
for eigenvalue problems.

In the literature, most of the results based on the a~posteriori error analysis for the finite element method 
(see \cite{MAJTO,RV} and the references therein) consider the source problem. 
In comparison there are only a few results for the a~posteriori error analysis of the Stokes eigenvalue problem available.
In \cite{LCMLRS}, Lovadina et al. presented the numerical analysis for a residual-based a~posteriori error estimator for the 
finite element discretization of the Stokes eigenvalue problem. 
In \cite{LHGWWSYN}, Liu et al.  proposed the finite element approximation of the Stokes eigenvalue 
problem based on a projection method, and derive some superconvergence results and the related recovery type a~posteriori error 
estimators. A posteriori error estimators for stabilized low-order mixed finite elements for the Stokes eigenvalue problem are presented by 
Armentano et al. \cite{AMGVM}. A new adaptive mixed finite element method based on residual type a~posteriori error estimators for the 
Stokes eigenvalue problem is proposed by Han et al. \cite{HJZZYY}. In \cite{PH}, Huang presented two stabilized finite element 
methods for the Stokes eigenvalue problem based on the lowest equal-order finite element pair and also discussed 
a~posteriori lower and upper bounds of Stokes eigenvalues.

Arnold and Winther introduced the strongly symmetric Arnold-Winther mixed finite element (MFEM) for linear elastic problems in 
\cite{ADRW} and proved its stability for any material parameters.  Hence, the proposed Arnold-Winther MFEM is also stable for the 
Stokes problem as a limit case of linear elasticity. In \cite{CCJGEJP}, Carstensen et al. presented the Arnold-Winther mixed finite element 
formulation for the Stokes source problem. 

In this paper, we are presenting the Arnold-Winther mixed finite element formulation for the two dimensional Stokes eigenvalue problem 
using the stress-velocity formulation. In principal, the  stress-velocity formulation is originated from a physical model where 
incompressible Newtonian flows are modeled by the conservation of momentum and the constitute law. 
The stress-velocity formulation for the Stokes eigenvalue problem reads as follows:
find a symmetric stress tensor $\bm{\sigma}$, a nonzero eigenfunction $\bm{u}$, and an eigenvalue $\lambda$ such that
\begin{align*}
    -\bmdiv  \bm{\sigma}& =\lambda \bm{u}\quad \mbox{in}\;\Omega,\quad
    \mathcal{A}\bm{\sigma}-\bm{\epsilon}(\bm{u})=0\quad\mbox{in}\;\Omega,\quad
    \bm{u}=0\quad \mbox{on}\;\partial\Omega,
\end{align*}
for a bounded Lipschitz domain $\Omega\subset \mathbb{R}^2$. 
Here $\bm{\sigma}, \mathcal{A}\bm{\sigma} $ and $\bm{\epsilon}(\bm{u})$ 
are the stress tensor,  the deviatoric stress tensor and the deformation rate tensor, respectively. 

For simplicity, we restrict ourselves in this paper to the analysis for simple eigenvalues $\lambda$.
This simplifies the a priori error analysis in that the eigenspace to $\lambda$ is one-dimensional.
Note that the presented a posteriori error estimator can directly be extended to multiple eigenvalues,
in the way that the sum over the squared error estimators, for all the discrete eigenvectors that approximate eigenvectors
to the same multiple eigenvalue, provides an upper bound of the eigenvalue error.

The remaining parts of this paper are organized as follows: 
Section~\ref{formu} presents the necessary notation, the formulation of the problem, 
the discretization of the domain and the mixed finite element formulation. 
Section~\ref{numep} is devoted to study the a~priori error analysis of the eigenvalue problem.
Section~\ref{secpost} presents the  local post-processing and its higher order convergence analysis. 
The a~posteriori error analysis is introduced in Section~\ref{postera}. 
Finally, we verify the reliability and efficiency
of the a~posteriori error estimator 
and the higher order convergence of post-processed eigenvalues
in three numerical experiments
in Section~\ref{comre}.

\section{Preliminaries} \label{formu}
Let $H^s(\omega)$ be the standard Sobolev space  with the associated norm $\|\cdot\|_{s,\omega}$ for $s\ge0$. In case of $
\omega=\Omega$, we use $\|\cdot\|_{s}$ instead of $\|\cdot\|_{s,\Omega}$. Let $H^{-s}(\omega) :=(H^{s}(\omega))^*$ be the dual 
space of $H^{s}(\omega)$. Now we extend the definitions for vector and matrix-valued function. Let $\bm{H}^{s}(\omega)=\bm{H}^{s}
(\omega;\mathbb{R}^2)$ and  $\bm{H}^{s}(\omega,\mathbb{R}^{2\times 2})$ be the Sobolev spaces over the set of $2$-dimensional 
vector and $2\times 2$ matrix-valued function, respectively.
Define $\bm{v}=(v_1,v_2)^t\in\mathbb{R}^2, \bm{\tau}=(\tau_{ij})_{2\times 2}$ and $\bm{\sigma}=(\sigma_{ij})_{2\times 2}\in\mathbb{R}
^{2\times2}$, then
\begin{align*}
    \nabla\bm{v}&:=\Bigg(\begin{array}{cc}
    \frac{\partial v_1}{\partial x}&\frac{\partial v_1}{\partial y}\\
    \frac{\partial v_2}{\partial x}&\frac{\partial v_2}{\partial y}
    \end{array}\Bigg),\quad \bmdiv(\tau):=\Bigg(\begin{array}{c}
    \frac{\partial \tau_{11}}{\partial x}+\frac{\partial \tau_{12}}{\partial y}\\
    \frac{\partial \tau_{21}}{\partial x}+\frac{\partial \tau_{22}}{\partial y}
    \end{array}\Bigg),\quad\mbox{tr}\,\bm{\tau} :=\tau_{11}+\tau_{22},\\
    \bm{\tau\,v} &:=\Big(\begin{array}{c}
    \tau_{11}v_1+\tau_{12}v_2\\
    \tau_{21}v_1+\tau_{22}v_{2}
    \end{array}\Big),\quad\bm{\tau} :\bm{\sigma}:=\sum_{i,j}\tau_{ij}\sigma_{ij},\quad\bm{\delta}:=2\times2\; \mbox{unit matrix}. 
\end{align*}
Let the  divergence conforming stress space $\bm{H}(\bmdiv,\Omega,\mathbb{R}^{2\times 2})$ be defined as
\begin{align*}
    \bm{H}(\bmdiv,\Omega,\mathbb{R}^{2\times 2})
    :=\{\bm{\tau}\in \bm{L}^2(\Omega;\mathbb{R}^{2\times2}) | \bmdiv\bm{\tau}\in \bm{L}^2(\Omega)\},
\end{align*}
equipped with the norm
\begin{align*}
    \|\bm{\tau}\|_{\bm{H}(\bmdiv,\Omega,\mathbb{R}^{2\times 2})}^2 :=(\bm{\tau},\bm{\tau})+(\bmdiv\bm{\tau},\bmdiv\bm{\tau}).
\end{align*}
In this paper, we are using the notation $(\cdot,\cdot)_{\omega}$ for the $\bm{L}^2(\omega;\mathbb{R}^{2\times2})$ inner product 
$\int_\omega \bm{\tau}:\bm{\tau}dx$ as well as the $\bm{L}^2(\omega)$ inner product $\int_\omega \bm{\tau}\cdot\bm{\tau}dx$. 
For $w=\Omega$, the notation $(\cdot,\cdot)$ is used instead of $(\cdot,\cdot)_{\Omega}$.
The symbols $\lesssim$ and $\gtrsim$ are used throughout the paper to denote inequalities 
which are valid up to positive constants that are
independent of the local mesh size $h$ but may depend on the size of the
eigenvalue $\lambda$ and the coefficient $\nu$.
\par
Let $\Omega\subset \mathbb{R}^2$ be a bounded and connected Lipschitz domain. 
Consider the Stokes eigenvalue problem
\begin{align}\label{pre12}
\begin{split}
    -\nu \bigtriangleup \bm{u}+\nabla p &=\lambda \bm{u}\quad \,\mbox{in }\Omega,  \\
    \ddiv\bm{u}&=0\quad\quad\mbox{in }\Omega,\\
    \bm{u}&=0\quad\quad\mbox{on }\partial\Omega,
\end{split}
\end{align}  
with the compatibility condition on the pressure
\begin{align}\label{pre11}
    \int_{\Omega} p\;dx=0.
\end{align}
Define $\bm{\sigma}=(\sigma_{i,j})_{2\times 2}$ as a stress tensor and the deformation rate tensor as
\begin{align*}
    \bm{\epsilon}(\bm{u}):=\frac{1}{2}(\nabla \bm{u}+(\nabla \bm{u})^t).
\end{align*} 
From \eqref{pre12} we can derive 
the stress-velocity-pressure formulation for the Stokes eigenvalue problem, 
which is the set of original physical equations for 
incompressible Newtonian flow,
\begin{align*}
    -\bmdiv  \bm{\sigma}& =\lambda \bm{u}\quad \,\mbox{in }\Omega,\\
    \bm{\sigma}+p\bm{\delta}-2\nu\bm{\epsilon}(\bm{u})&=0\quad\quad \mbox{in }\Omega,\\
    \ddiv\bm{u}&=0\quad\quad \mbox{in }\Omega,\\
    \bm{u}&=0\quad\quad\mbox{on }\partial\Omega.
\end{align*}
Next, we define the deviatoric operator
$\mathcal{A}:\mathbb{S}\rightarrow \mathbb{S}$,
where  $\mathbb{S}$ is the space of symmetric tensors 
$\mathbb{S}=\{\bm{\tau}\in \mathbb{R}^{2\times 2}|\bm{\tau}=\bm{\tau}^t\}$. 
Then we can define the deviator $\mathcal{A}\bm{\tau}$ of $\bm{\tau}$ by
\begin{align*}
    \mathcal{A}\bm{\tau} :=\frac{1}{2}\big(\bm{\tau}-\frac{1}{2}(\mbox{tr}\; \bm{\tau})\bm{\delta}\big)
    \quad \mbox{for all}\; \bm{\tau}\in \mathbb{S}. 
\end{align*} 
Here, $\mbox{Ker}(\mathcal{A})=\{q\bm{\delta}\in\mathbb{S}\;|\;q\in \mathbb{R}\}$ 
and $\mathcal{A}\tau$ is a trace-free tensor.
In addition, $\mathcal{A}$ satisfies for all $\bm{\tau},\bm{\sigma}\in \mathbb{S}$
the following properties, 
\begin{align*}
    (\mathcal{A}\bm{\tau},\bm{\sigma}) &=(\bm{\tau},\mathcal{A}\bm{\sigma}),\\
    (\mathcal{A}\bm{\tau},2\nu \mathcal{A}\bm{\sigma}) &=(\mathcal{A}\bm{\sigma},\bm{\tau})
    =\frac{1}{2\nu}\Big((\bm{\sigma},\bm{\tau})-\frac{1}{2}(\mbox{tr}\;\bm{\sigma},\mbox{tr}\;\bm{\tau})\Big),\\
    \|\mathcal{A}\bm{\tau}\|&\le\frac{1}{2}\|\bm{\tau}\|.
\end{align*}
\par
Using the deviatoric tensor $\mathcal{A}$, we arrive at the stress-velocity formulation of the Stokes eigenvalue problem
\begin{align}\label{prob11}
\begin{split}
    -\bmdiv  \bm{\sigma}& =\lambda \bm{u}\quad \,\mbox{in }\Omega,\\
    \mathcal{A}\bm{\sigma}-\bm{\epsilon}(\bm{u})&=0\quad\quad \mbox{in }\Omega,\\
    \bm{u}&=0\quad\quad\mbox{on }\partial\Omega.
\end{split}
\end{align}
Using the compatibility condition \eqref{pre11}, we have
\begin{align*}
    \int_{\Omega}\mbox{tr} \;\bm{\sigma}\,dx=0.
\end{align*}
Defining $\bm{V} :=\bm{L}^2(\Omega)$ and
\begin{align*}
    \bm{\Phi} :=\bm{H}(\bmdiv,\Omega;\mathbb{S})/\mathbb{R}
    \sim \Big\{\bm{\tau}\in\bm{H}(\bmdiv,\Omega,\mathbb{S})|\int_{\Omega}\mbox{tr}\,\bm{\tau}\, dx=0\Big\},
\end{align*}
we can now derive the following weak form for problem \eqref{prob11}:
find $\bm{\sigma}\in\bm{\Phi}$,
$\bm{u}\in\bm{V}$ with $\|\bm{u}\|_0=1$,
and $\lambda\in \mathbb{R}_+$ such that
\begin{align}\label{prob12}
\begin{split}
    (\mathcal{A}\bm{\sigma},\bm{\tau})+(\bmdiv  \bm{\tau},\bm{u})
    &=0\qquad\qquad\;\mbox{for all}\;\bm{\tau}\in{\bm{\Phi}},\\
    (\bmdiv  \bm{\sigma}, \bm{v})& =-\lambda (\bm{u},\bm{v})\quad\mbox{for all}\;\bm{v}\in{\bm{V}}.
\end{split}
\end{align}
\par
Let $\{\mathcal{T}_{h}\}$ denote a family of regular triangulations of $\bar{\Omega}$ into triangles $K$ of diameter $h_K$. For 
each $\mathcal{T}_h$, we define $\mathcal{E}_h$ as the set of all edges of $\mathcal{T}_h$ and
$h_E$ as the length of the edge $E\in\mathcal{E}_h$. Furthermore, let $[\bm{w}]$ denote the jump of $\bm{w}$,
\begin{align*}
    [\bm{w}]|_{E}:=(\bm{w}|_{K_+})|_{E}-(\bm{w}|_{K_-})|_{E}\quad\mbox{if}\quad E=\bar{K}_{+}\cap\bar{K}_{-}.
\end{align*}
Let $\nabla_{\mathcal{T}_h}$ and $\bm{\epsilon}_{\mathcal{T}_h}$ denote the piecewise gradient and
the piecewise symmetric gradient, i.e.
$(\nabla_{\mathcal{T}_h}\cdot)|_{K}=\nabla(\cdot|_{K})$ and $(\bm{\epsilon}_{\mathcal{T}_h}\cdot)|_{K}=\bm{\epsilon}(\cdot|_{K})$,
for all $K\in\mathcal{T}_h$.
\par
Finally, we define the following finite element spaces associated 
with the triangulation $\mathcal{T}_h$,
\begin{align*}
    AW_{k}(K)&:=\{\bm{\tau}\in P_{k+2}(K;\mathbb{S})\;|\;\bmdiv\bm{\tau}\in P_{k}(K;\mathbb{R}^2)\},\\
    \bm{\Phi}_h&:=\{\bm{\tau}\in \bm{\Phi}\;|\;\bm{\tau}|_K\in AW_{k}(K)\},\\
    \bm{{V}}_h&:=\{\bm{v}\in \bm{L}^{2}(\Omega)\;|\;\bm{v}|_K\in P_{k}(K;\mathbb{R}^2)\}.
\end{align*}
Here, $AW_{k}(K)$ denotes the Arnold-Winther MFEM of index $k\ge 1$ of \cite{ADRW} and $P_{k}(K)$ denotes the set of all 
polynomials of total degree up to $k$ on the domain $K$. The space $\Phi_h$ consists of all symmetric polynomial matrix fields of 
degree at most $k+1$ together with the divergence free matrix fields of degree $k+2$.
Note that $\bm{\Phi}_{h}\subset \bm{\Phi}$. When $\bm{\tau}_h\in \bm{\Phi}_h$ we have that $\bm{\tau}_{h}$ has continuous normal 
components and $\bm{\tau}_{h}$ satisfies $\int_{\Omega}\mbox{tr}\, \bm{\tau}_h dx=0$.
\par
The Arnold-Winther MFEM of the Stokes eigenvalue problem becomes the following: find $\bm{\sigma}_h\in\bm{\Phi}_h$,  
$\bm{u}_h\in\bm{V}_h$ with $\|\bm{u_h}\|_0=1$, and $\lambda_h\in\mathbb{R}_+$ such that
\begin{align}\label{mfpro11}
\begin{split}
    (\mathcal{A}\bm{\sigma}_h,\bm{\tau}_h)+(\bmdiv  \bm{\tau}_h,\bm{u}_h)&=0
    \qquad\qquad\qquad\!\mbox{for all}\;\bm{\tau}_h\in{\bm{\Phi}_h},\\
    (\bmdiv  \bm{\sigma}_h, \bm{v}_h)& =-\lambda_h (\bm{u}_h,\bm{v}_h)\quad\mbox{for all}\;\bm{v}_h\in{\bm{V}_h}.
\end{split}
\end{align}
%
\section{A priori error analysis}\label{numep}
Our main aim is to show that the solutions of the Arnold-Winther MFEM of the Stokes 
eigenvalue problem
converge to the solution of the corresponding spectral problem which comes to apply the classical spectral approximation theory 
presented in \cite{IBJO} to the associated source problem.
\par
Consider the stress-velocity formulation for the associated source problem
\begin{align}\label{sprob11}
\begin{split}
    -\bmdiv  \bm{\sigma^f}& =\bm{f}\quad \mbox{in}\;\Omega,\\
    \mathcal{A}\bm{\sigma^f}-\bm{\epsilon}(\bm{u^f})&=0\quad\; \mbox{in}\;\Omega,\\
    \bm{u^f}&=0\quad \;\mbox{on}\;\partial\Omega.
\end{split}
\end{align}
Using the compatibility condition \eqref{pre11}, we have
\begin{align*}
    \int_{\Omega}\mbox{tr} \;\bm{\sigma^f}\,dx=0.
\end{align*}
For the above problem \eqref{sprob11} there exist well-known regularity results 
for convex domains with sufficiently smooth boundary $\partial\Omega$.
If $\bm{f}\in \bm{L}^{2}(\Omega)$ then the solution of problem \eqref{sprob11} satisfies 
$\bm{u}\in \bm{H}^{2}(\Omega)\cap\bm{H}^{1}(\bar{\Omega})$, $p\in{H}^{1}(\Omega)/\mathbb{R}$, $\bm{\sigma}\in \bm{H}^{1}(\Omega;\mathbb{S})$, 
and 
\begin{align}\label{soreg11}
    \|\bm{u}\|_{2}+\|p\|_{1}+\|\bm{\sigma}\|_1\le\|\bm{f}\|_0 .
\end{align}
Using the well-posedness of problem \eqref{sprob11}, the 
operators $S:\bm{V}\rightarrow\bm{\Phi}$  and $T:\bm{V}\rightarrow\bm{V}$
are well defined for any $\bm{f}\in\bm{V}$ such that
$S\bm{f}=\bm{\sigma^f}$, and $T\bm{f}=\bm{u}^f$ are the stress 
and velocity solutions, respectively.
\par
We now define the following weak form for problem \eqref{prob11}:
for given $\bm{f}\in \bm{V}$, $(S\bm{f},T\bm{f})\in (\bm{\Phi},\bm{V})$ is the solution of
\begin{align}\label{nmsprob12}
\begin{split}
    (\mathcal{A}( S\bm{f}),\bm{\tau})+(\bmdiv  \bm{\tau},T\bm{f})&=0\qquad\qquad\mbox{for all }\bm{\tau}\in{\bm{\Phi}},\\
    (\bmdiv  ( S\bm{f}), \bm{v})& = -(\bm{f},\bm{v})\quad\,\mbox{for all }\bm{v}\in{\bm{V}}.
\end{split}
\end{align}
The above problem \eqref{nmsprob12} has unique solution from the well known 
inf-sup condition of the mixed formulation and \cite[Lemma~2.1]{CCJGEJP}.
\par
For the discrete solution operators $S_h:\bm{V}\rightarrow\bm{\Phi}_h$ and $T_h:\bm{V}\rightarrow\bm{V}_h$, 
the Arnold-Winther MFEM of the Stokes source problem becomes the following: 
find $S_h\bm{f}\in\bm{\Phi}_h$ and  $T_h\bm{f}\in\bm{V}_h$ such that
\begin{align}\label{nmsprob131}
\begin{split}
    (\mathcal{A}( S_h\bm{f}),\bm{\tau}_h)+(\bmdiv  \bm{\tau}_h,T_h\bm{f})
    &=0\quad\quad\quad\;\,\mbox{for all}\;\bm{\tau}_h\in{\bm{\Phi}_h},\\
    (\bmdiv  ( S_h\bm{f}), \bm{v}_h)& = -(\bm{f},\bm{v}_h)\quad\mbox{for all}\;\bm{v}_h\in{\bm{V}_h}.
\end{split}
\end{align}
From \cite{CCJGEJP}, the discrete source problem is well-posed and has a unique solution.
From \cite{ADRW} we have the following a~priori estimates 
for $(\bm{\sigma^f},\bm{u^f})\in(\bm{\Phi}\cap\bm{H}^{k+2}(\Omega;\mathbb{S}))\times\bm{H}^{k+2}(\Omega)$ 
and $\bm{f}\in \bm{H}^{k}(\Omega)$
\begin{align}\label{eqsoei11}
    \|S\bm{f}-S_h\bm{f}\|_0&\lesssim h^{m}\|\bm{\sigma}^f\|_{m},\quad 1\le m\le k+2,\\
    \|\bmdiv (S\bm{f}-S_h\bm{f})\|_0&\lesssim h^{m}\|\bmdiv \bm{\sigma}^f\|_{m},\quad 0\le m\le k+1,\label{eqsoei12}\\
    \|T\bm{f}-T_h\bm{f}\|_0&\lesssim h^{m}\|\bm{u}^f\|_{m+1},\quad 1\le m\le k+1.\label{eqsoei13}
\end{align}
\par
Hence, we can state the following convergence results by \eqref{eqsoei11} and \eqref{eqsoei13}
\begin{align}\label{convere11}
    \|T-T_h\|_{\mathcal{L}(\bm{V},\bm{V})}\rightarrow 0\quad\mbox{if}\quad h\rightarrow 0,\\
    \|S-S_h\|_{\mathcal{L}(\bm{V},\bm{\Phi})}\rightarrow 0\quad\mbox{if}\quad h\rightarrow 0.\label{convere12}
\end{align}
The above results \eqref{convere11} and  \eqref{convere12} are equivalent to the convergence of eigenvalues and eigenfunctions. 
Thus, using the abstract theory from \cite{DB,BJJP} 
and the a~priori results \eqref{eqsoei11} and \eqref{eqsoei13}, we have
\begin{align}
    \|\bm{u}-\bm{u}_h\|_{0}&\lesssim h^m, \quad 1\le m\le k+1,\label{erreigu11}\\
    \|\bm{\sigma}-\bm{\sigma}_h\|_{0}&\lesssim h^m,\quad 1\le m\le k+2.\label{erreigs12}
\end{align}
Note that $p=-tr\, \bm{\sigma}/2$, hence the approximation of the pressure is defined by $p_h=-tr\, \bm{\sigma}_h/2$ and satisfies the 
following estimate
\begin{align*}
    \|p-p_h\|_0=\frac{1}{2}\|\mbox{tr}\,\bm{\sigma}- \mbox{tr}\,\bm{\sigma}_h\|_0
    \leq\|\bm{\sigma}-\bm{\sigma}_h\|_0\lesssim h^{m},\; 1\le m\le k+2.
\end{align*}
The next lemma establishes a connection between the errors in the 
eigenvalues and in the eigenfunctions.
\begin{lemma}\label{eigerrlem11}
Let $(\bm{\sigma},\bm{u},\lambda)$ and $(\bm{\sigma}_h,\bm{u}_h,\lambda_h)$ 
be solutions of the continuous eigenvalue problem \eqref{prob12} and the discrete 
eigenvalue problem \eqref{mfpro11}, respectively. 
Then, we have the identity
\begin{align}\label{eigenlam11}
    \lambda-\lambda_h= 2\nu\|\mathcal{A}(\bm{\sigma}-\bm{\sigma}_h)\|^{2}_{0}-\lambda_h\|\bm{u}-\bm{u}_h\|^{2}_{0}.
\end{align}
\end{lemma}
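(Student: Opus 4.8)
The plan is to express both $\lambda$ and $\lambda_h$ as deviatoric energies of their stresses and then to expand the energy of the stress error $\bm{\sigma}-\bm{\sigma}_h$. The algebraic engine is the stated property of $\mathcal{A}$: setting $\bm{\sigma}=\bm{\tau}$ in $(\mathcal{A}\bm{\tau},2\nu\mathcal{A}\bm{\sigma})=(\mathcal{A}\bm{\sigma},\bm{\tau})$ gives $2\nu\|\mathcal{A}\bm{\tau}\|_0^2=(\mathcal{A}\bm{\tau},\bm{\tau})$ for every symmetric $\bm{\tau}$. Applying this to $\bm{\tau}=\bm{\sigma}-\bm{\sigma}_h$ and using the symmetry $(\mathcal{A}\bm{\sigma},\bm{\sigma}_h)=(\mathcal{A}\bm{\sigma}_h,\bm{\sigma})$ reduces the right-hand side to
\begin{align*}
 2\nu\|\mathcal{A}(\bm{\sigma}-\bm{\sigma}_h)\|_0^2=(\mathcal{A}\bm{\sigma},\bm{\sigma})+(\mathcal{A}\bm{\sigma}_h,\bm{\sigma}_h)-2(\mathcal{A}\bm{\sigma},\bm{\sigma}_h),
\end{align*}
so everything comes down to evaluating these three inner products.

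First I would treat the two diagonal terms. Choosing $\bm{\tau}=\bm{\sigma}$ in the first equation of \eqref{prob12} and $\bm{v}=\bm{u}$ in the second, together with the normalization $\|\bm{u}\|_0=1$, yields $(\mathcal{A}\bm{\sigma},\bm{\sigma})=-(\bmdiv\bm{\sigma},\bm{u})=\lambda$. The identical argument on \eqref{mfpro11} with $\bm{\tau}_h=\bm{\sigma}_h$, $\bm{v}_h=\bm{u}_h$ and $\|\bm{u}_h\|_0=1$ gives $(\mathcal{A}\bm{\sigma}_h,\bm{\sigma}_h)=\lambda_h$. Substituting these into the expansion above leaves only the cross term $(\mathcal{A}\bm{\sigma},\bm{\sigma}_h)$ to be identified.

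The key step, and the one that relies on the structure of the discretization, is this cross term. Since $\bm{\Phi}_h\subset\bm{\Phi}$, the discrete stress $\bm{\sigma}_h$ is an admissible test function in the continuous problem, so testing the first equation of \eqref{prob12} with $\bm{\tau}=\bm{\sigma}_h$ gives $(\mathcal{A}\bm{\sigma},\bm{\sigma}_h)=-(\bmdiv\bm{\sigma}_h,\bm{u})$. I would then exploit that, by the very definition of $AW_{k}(K)$, the field $\bmdiv\bm{\sigma}_h$ is piecewise in $P_{k}(K;\mathbb{R}^2)$ and hence $\bmdiv\bm{\sigma}_h\in\bm{V}_h$; the second equation of \eqref{mfpro11} therefore upgrades from a relation tested against $\bm{V}_h$ to the pointwise identity $\bmdiv\bm{\sigma}_h=-\lambda_h\bm{u}_h$ in $\bm{L}^2(\Omega)$. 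This yields $(\mathcal{A}\bm{\sigma},\bm{\sigma}_h)=-(\bmdiv\bm{\sigma}_h,\bm{u})=\lambda_h(\bm{u}_h,\bm{u})$, and the polarization identity $2(\bm{u},\bm{u}_h)=\|\bm{u}\|_0^2+\|\bm{u}_h\|_0^2-\|\bm{u}-\bm{u}_h\|_0^2=2-\|\bm{u}-\bm{u}_h\|_0^2$ turns $-2(\mathcal{A}\bm{\sigma},\bm{\sigma}_h)$ into $-2\lambda_h+\lambda_h\|\bm{u}-\bm{u}_h\|_0^2$. Collecting the three contributions gives $2\nu\|\mathcal{A}(\bm{\sigma}-\bm{\sigma}_h)\|_0^2=\lambda-\lambda_h+\lambda_h\|\bm{u}-\bm{u}_h\|_0^2$, which rearranges to \eqref{eigenlam11}. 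I expect the only genuine subtlety to be the membership $\bmdiv\bm{\sigma}_h\in\bm{V}_h$, because it is precisely this that lets the discrete momentum equation be paired against the non-discrete function $\bm{u}$; the rest is symmetric bookkeeping of the three inner products.
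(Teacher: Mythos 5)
Your proof is correct and follows essentially the same route as the paper: both establish $(\mathcal{A}\bm{\sigma},\bm{\sigma})=\lambda$, $(\mathcal{A}\bm{\sigma}_h,\bm{\sigma}_h)=\lambda_h$, and the cross term $(\mathcal{A}\bm{\sigma},\bm{\sigma}_h)=\lambda_h(\bm{u}_h,\bm{u})$, then expand the deviatoric energy of $\bm{\sigma}-\bm{\sigma}_h$ and apply the polarization identity $\|\bm{u}-\bm{u}_h\|_0^2=2-2(\bm{u},\bm{u}_h)$. Your explicit justification of the cross term via $\bmdiv\bm{\sigma}_h\in\bm{V}_h$ (hence $\bmdiv\bm{\sigma}_h=-\lambda_h\bm{u}_h$ in $\bm{L}^2$) is a detail the paper leaves implicit but uses elsewhere, and it is exactly right.
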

\begin{proof}
From \eqref{prob12} and \eqref{mfpro11}, using $(\bm{u},\bm{u})=1$ and  $(\bm{u}_h,\bm{u}_h)=1$, we have
\begin{align*}
    (2\nu\mathcal{A}\bm{\sigma},\mathcal{A}\bm{\sigma})&=\lambda(\bm{u},\bm{u})=\lambda,\\
    (2\nu\mathcal{A}\bm{\sigma}_h,\mathcal{A}\bm{\sigma}_h)&=\lambda_h(\bm{u}_h,\bm{u}_h)=\lambda_h,\\
    (2\nu\mathcal{A}\bm{\sigma},\mathcal{A}\bm{\sigma}_h)&=\lambda_h(\bm{u}_h,\bm{u}).
\end{align*}
Moreover, it follows
\begin{align*}
    2\nu\|\mathcal{A}(\bm{\sigma}-\bm{\sigma}_h)\|_{0}^2
    &=2\nu(\mathcal{A}(\bm{\sigma}-\bm{\sigma}_h),\mathcal{A}(\bm{\sigma}-\bm{\sigma}_h))\\
    &=2\nu(\mathcal{A}\bm{\sigma},\mathcal{A}\bm{\sigma})+2\nu(\mathcal{A}\bm{\sigma}_h,\mathcal{A}\bm{\sigma}_h)
    -4\nu(\mathcal{A}\bm{\sigma},\mathcal{A}\bm{\sigma}_h)\\
    &=\lambda+\lambda_h-2\lambda_h(\bm{u}_h,\bm{u}).
\end{align*}
Then we obtain
\begin{align*}
    \lambda- \lambda_h=2\nu\|\mathcal{A}(\bm{\sigma}-\bm{\sigma}_h)\|_{0}^2-2\lambda_h+2\lambda_h(\bm{u}_h,\bm{u}).
\end{align*}
Using 
$\|\bm{u}-\bm{u}_h\|^2_{0}=2-2(\bm{u}_h,\bm{u})$ yields
\begin{align*}
    \lambda- \lambda_h=2\nu\|\mathcal{A}(\bm{\sigma}-\bm{\sigma}_h)\|_{0}^2-\lambda_h\|\bm{u}-\bm{u}_h\|^2_{0}.
\end{align*}
\end{proof}
An immediate consequence of Lemma~\ref{eigerrlem11}
are the following two lemmas.
\begin{lemma}
For sufficiently smooth
$\bm{u}\in \bm{H}^{k+2}(\Omega)$,
$\bm{\sigma}\in\bm{H}^{k+2}(\Omega;\mathbb{S})$,
the following a~priori error estimate holds
\begin{align}\label{eigenlam12}
   |\lambda - \lambda_h| \lesssim h^{2m}, \quad 0\le m\le k+1.
\end{align}
\end{lemma}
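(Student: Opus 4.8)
The plan is to read the result straight off the algebraic identity \eqref{eigenlam11} established in Lemma~\ref{eigerrlem11}, whose right-hand side is quadratic in the eigenfunction errors. Since the eigenvalue error equals a difference of two squared-norm quantities, each controlled at twice the convergence rate of the corresponding eigenfunction approximation, the bound \eqref{eigenlam12} should follow immediately with the doubled exponent $2m$.

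First I would take absolute values in \eqref{eigenlam11} and apply the triangle inequality to obtain
\begin{align*}
    |\lambda-\lambda_h|
    \le 2\nu\|\mathcal{A}(\bm{\sigma}-\bm{\sigma}_h)\|^{2}_{0}
    +\lambda_h\|\bm{u}-\bm{u}_h\|^{2}_{0}.
\end{align*}
For the stress contribution I would use the contraction property $\|\mathcal{A}\bm{\tau}\|\le\frac{1}{2}\|\bm{\tau}\|$ of the deviatoric operator, giving $2\nu\|\mathcal{A}(\bm{\sigma}-\bm{\sigma}_h)\|^{2}_{0}\le\frac{\nu}{2}\|\bm{\sigma}-\bm{\sigma}_h\|^{2}_{0}$, and then square the eigenfunction estimate \eqref{erreigs12} to get $\|\bm{\sigma}-\bm{\sigma}_h\|^{2}_{0}\lesssim h^{2m}$ for $1\le m\le k+2$.

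For the velocity contribution I would first note that $\lambda_h$ is uniformly bounded, since the discrete eigenvalues converge to $\lambda$ by \eqref{convere11}--\eqref{convere12}, so it may be absorbed into the hidden constant. I would then square the estimate \eqref{erreigu11}, giving $\lambda_h\|\bm{u}-\bm{u}_h\|^{2}_{0}\lesssim h^{2m}$ for $1\le m\le k+1$. Combining the two bounds, both terms are of order $h^{2m}$ precisely on the common range $1\le m\le k+1$, while the case $m=0$ is trivial because $|\lambda-\lambda_h|$ is bounded; this produces \eqref{eigenlam12} on the stated range $0\le m\le k+1$.

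I do not anticipate a genuine obstacle here, as the argument is a short application of the identity together with the already-established a~priori estimates. The only point requiring care is the restriction of the exponent: although the stress error alone converges at the higher rate $h^{k+2}$, the velocity error caps the common rate at $h^{k+1}$, so the binding constraint—and the reason the range stops at $m=k+1$ rather than $k+2$—comes entirely from the velocity estimate \eqref{erreigu11}.
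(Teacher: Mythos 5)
Your proposal is correct and follows exactly the route the paper intends: the paper's proof is the one-line statement that the assertion follows from \eqref{erreigu11}--\eqref{eigenlam11}, and your argument simply fills in the obvious details (triangle inequality in the identity \eqref{eigenlam11}, the bound $\|\mathcal{A}\bm{\tau}\|\le\tfrac12\|\bm{\tau}\|$, squaring \eqref{erreigu11} and \eqref{erreigs12}, boundedness of $\lambda_h$, and the trivial case $m=0$). No discrepancies.
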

\begin{proof}
The assertion follows from \eqref{erreigu11}--\eqref{eigenlam11}.
\end{proof}
\begin{lemma}\label{eigerr2}
For sufficiently smooth
$\bm{u}\in \bm{H}^{k+2}(\Omega)$,
$\bm{\sigma}\in\bm{H}^{k+2}(\Omega;\mathbb{S})$,
the following a~priori error estimate holds
\begin{align}\label{erreigdiv}
    \|\bmdiv(\bm{\sigma}-\bm{\sigma}_h)\|_0\lesssim h^{m}, \quad 0\le m\le k+1.
\end{align}
\end{lemma}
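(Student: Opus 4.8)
The plan is to reduce the divergence error to the eigenfunction and eigenvalue errors, both of which are already controlled by \eqref{erreigu11} and \eqref{eigenlam12}. The starting observation is that, because the Arnold--Winther space satisfies $\bmdiv\, AW_k(K)=P_k(K;\mathbb{R}^2)$, the discrete divergence $\bmdiv\bm{\sigma}_h$ lies in $\bm{V}_h$. Hence I would test the second equation of \eqref{mfpro11} with the admissible choice $\bm{v}_h=\bmdiv\bm{\sigma}_h+\lambda_h\bm{u}_h\in\bm{V}_h$, which rewrites that equation as $(\bmdiv\bm{\sigma}_h+\lambda_h\bm{u}_h,\bm{v}_h)=0$ and therefore forces $\|\bmdiv\bm{\sigma}_h+\lambda_h\bm{u}_h\|_0^2=0$. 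Thus the discrete equilibrium holds pointwise,
\begin{align*}
    \bmdiv\bm{\sigma}_h=-\lambda_h\bm{u}_h,
\end{align*}
while its continuous counterpart $\bmdiv\bm{\sigma}=-\lambda\bm{u}$ is exactly the strong form \eqref{prob11}.

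Subtracting the two identities gives $\bmdiv(\bm{\sigma}-\bm{\sigma}_h)=-(\lambda\bm{u}-\lambda_h\bm{u}_h)$, so it remains to bound $\|\lambda\bm{u}-\lambda_h\bm{u}_h\|_0$. I would split this after inserting $\pm\lambda\bm{u}_h$,
\begin{align*}
    \lambda\bm{u}-\lambda_h\bm{u}_h=\lambda(\bm{u}-\bm{u}_h)+(\lambda-\lambda_h)\bm{u}_h,
\end{align*}
and then use the normalization $\|\bm{u}_h\|_0=1$ together with the triangle inequality to arrive at
\begin{align*}
    \|\bmdiv(\bm{\sigma}-\bm{\sigma}_h)\|_0\le\lambda\|\bm{u}-\bm{u}_h\|_0+|\lambda-\lambda_h|.
\end{align*}

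To finish, I would insert the rates already established: \eqref{erreigu11} gives $\|\bm{u}-\bm{u}_h\|_0\lesssim h^m$ for $1\le m\le k+1$, and \eqref{eigenlam12} gives $|\lambda-\lambda_h|\lesssim h^{2m}$ on the same range. Since $\lambda$ is a fixed constant absorbed into $\lesssim$, and the eigenfunction term decays only like $h^m$ whereas the eigenvalue term decays like $h^{2m}$, the eigenfunction contribution dominates and yields $\|\bmdiv(\bm{\sigma}-\bm{\sigma}_h)\|_0\lesssim h^m$ for $1\le m\le k+1$; the case $m=0$ reduces to the trivial bound by a constant, so the full range $0\le m\le k+1$ follows.

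The only genuinely non-routine step is the first one: the reduction to a pointwise discrete equilibrium relies critically on the surjectivity property $\bmdiv\, AW_k(K)=P_k(K;\mathbb{R}^2)$ of the Arnold--Winther element, which is what guarantees $\bmdiv\bm{\sigma}_h\in\bm{V}_h$ and hence makes $\bm{v}_h=\bmdiv\bm{\sigma}_h+\lambda_h\bm{u}_h$ a legitimate test function. Everything after that is an elementary triangle-inequality argument. I would also note explicitly that the normalization $\|\bm{u}_h\|_0=1$ and the simple-eigenvalue sign convention making $\|\bm{u}-\bm{u}_h\|_0$ small are both already part of the standing hypotheses.
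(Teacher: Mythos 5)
Your proposal is correct and follows essentially the same route as the paper: both reduce the claim to the pointwise identities $\bmdiv\bm{\sigma}=-\lambda\bm{u}$ and $\bmdiv\bm{\sigma}_h=-\lambda_h\bm{u}_h$ and then conclude by the triangle inequality together with \eqref{erreigu11} and the eigenvalue estimate. The only difference is that you justify the discrete identity explicitly via $\bmdiv\bm{\sigma}_h\in\bm{V}_h$ and the test function $\bm{v}_h=\bmdiv\bm{\sigma}_h+\lambda_h\bm{u}_h$, a step the paper simply asserts.
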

\begin{proof}
From \eqref{prob12} and \eqref{mfpro11}, we have $\bmdiv(\bm{\sigma}) = -\lambda \bm{u}$ and $\bmdiv(\bm{\sigma}_h) = -\lambda_h \bm{u}_h$.
Therefore we get
\begin{align*}
\|\bmdiv(\bm{\sigma}-\bm{\sigma}_h) \|_0
&= \| \lambda_h \bm{u}_h - \lambda \bm{u}\|_0
\leq (\lambda_h-\lambda)\|\bm{u}_h\|_0 + \lambda\| \bm{u}_h - \bm{u}\|_0.
\end{align*}
The assertion follows from 
\eqref{erreigu11}--\eqref{eigenlam11}, and $\|\bm{u}_h\|_0=1$.
\end{proof}
\par
Let $\bm{P}_h$ denote the $\bm{L}^2$ projection onto $\bm{V}_h$ with the well known approximation property
\begin{align}\label{serr15}
    \|\bm{P}_h\bm{v}-\bm{v}\|_0&\lesssim h^{m}\|\bm{v}\|_{m},\quad\mbox{for all} \;\bm{v}\in\bm{H}^{m}(\Omega).
\end{align}
\par
In the following we relate the discrete eigenfunctions $(\bm{\sigma}_h,\bm{u}_h)$ 
to the discrete approximations $(\bm{\sigma}_{\lambda,h},\bm{u}_{\lambda,h})$
of the associated source problem \eqref{nmsprob131} with right hand side 
$\bm{f}=\lambda\bm{u}\in\bm{V}$.
Then  $\bm{\sigma}_{\lambda,h} = \lambda S_h\bm{u}$ and $\bm{u}_{\lambda,h} = \lambda T_h\bm{u}$
and we have the following lemma.
\begin{lemma}[{\cite[Theorem 3.4]{CCJGEJP}}]\label{mposterr12}
For sufficiently smooth boundary $\partial \Omega$, $\bm{\sigma}\in\bm{H}^{k+2}(\Omega;\mathbb{S})$, 
and $\bmdiv  \bm{\sigma}\in \bm{H}^{k+1}(\Omega)$, it holds that
\begin{align}
    \|\bm{P}_h\bm{u}-\bm{u}_{\lambda,h}\|_0&\lesssim h^{k+3}(\|\bm{\sigma}\|_{k+2}+\|\bmdiv\bm{\sigma}\|_{k+1}). 
\end{align}
\end{lemma}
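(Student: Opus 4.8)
The plan is to run an Aubin--Nitsche duality argument, exploiting that the source problem \eqref{nmsprob131} driven by $\bm{f}=\lambda\bm{u}$ has the exact eigenpair $(\bm{\sigma},\bm{u})$ as its continuous solution. First I would record the divergence relations that make the argument work. Testing the second equation of \eqref{nmsprob131} and using the Arnold--Winther commuting property $\bmdiv\bm{\Phi}_h\subseteq\bm{V}_h$ gives $\bmdiv\bm{\sigma}_{\lambda,h}=-\lambda\bm{P}_h\bm{u}$, while on the continuous level $\bmdiv\bm{\sigma}=-\lambda\bm{u}$; hence $\bmdiv(\bm{\sigma}-\bm{\sigma}_{\lambda,h})=-\lambda(\bm{u}-\bm{P}_h\bm{u})$. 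I would also note the two primal error estimates I will feed in: from \eqref{eqsoei11} with $m=k+2$, $\|\bm{\sigma}-\bm{\sigma}_{\lambda,h}\|_0=\lambda\|S\bm{u}-S_h\bm{u}\|_0\lesssim h^{k+2}\|\bm{\sigma}\|_{k+2}$, and $\|\bm{u}-\bm{P}_h\bm{u}\|_0\lesssim h^{k+1}\|\bm{u}\|_{k+1}\lesssim h^{k+1}\|\bmdiv\bm{\sigma}\|_{k+1}$ via \eqref{serr15} and $\bmdiv\bm{\sigma}=-\lambda\bm{u}$.

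Next, writing $\bm{e}_u:=\bm{P}_h\bm{u}-\bm{u}_{\lambda,h}\in\bm{V}_h$, I introduce the dual source problem with right-hand side $\bm{e}_u$, i.e. its continuous solution $(\bm{\psi},\bm{w})=(S\bm{e}_u,T\bm{e}_u)$ and discrete solution $(\bm{\psi}_h,\bm{w}_h)=(S_h\bm{e}_u,T_h\bm{e}_u)$. Since $\bm{e}_u\in\bm{L}^2(\Omega)$ and $\partial\Omega$ is smooth, the regularity \eqref{soreg11} yields $\|\bm{\psi}\|_1+\|\bm{w}\|_2\lesssim\|\bm{e}_u\|_0$. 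The key identity is obtained by testing the discrete dual equation with $\bm{v}_h=\bm{e}_u$ to get $\|\bm{e}_u\|_0^2=-(\bmdiv\bm{\psi}_h,\bm{e}_u)$, then replacing $\bm{e}_u=\bm{P}_h\bm{u}-\bm{u}_{\lambda,h}$, using $\bmdiv\bm{\psi}_h\in\bm{V}_h$ to swap $\bm{P}_h\bm{u}$ for $\bm{u}$, and inserting the first equations of the continuous and discrete primal problems tested against $\bm{\psi}_h\in\bm{\Phi}_h\subseteq\bm{\Phi}$. This collapses to
\begin{align*}
    \|\bm{e}_u\|_0^2=\big(\mathcal{A}(\bm{\sigma}-\bm{\sigma}_{\lambda,h}),\bm{\psi}_h\big).
\end{align*}

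Then I split $\bm{\psi}_h=\bm{\psi}-(\bm{\psi}-\bm{\psi}_h)$ and estimate the two resulting terms so that each acquires two extra powers of $h$. For the first term, symmetry of $\mathcal{A}$ and the continuous dual equation tested with $\bm{\tau}=\bm{\sigma}-\bm{\sigma}_{\lambda,h}$ give $(\mathcal{A}(\bm{\sigma}-\bm{\sigma}_{\lambda,h}),\bm{\psi})=-(\bmdiv(\bm{\sigma}-\bm{\sigma}_{\lambda,h}),\bm{w})=\lambda(\bm{u}-\bm{P}_h\bm{u},\bm{w})$; the $\bm{L}^2$-orthogonality $\bm{u}-\bm{P}_h\bm{u}\perp\bm{V}_h$ lets me subtract $\bm{P}_h\bm{w}$ and bound this by $\|\bm{u}-\bm{P}_h\bm{u}\|_0\,\|\bm{w}-\bm{P}_h\bm{w}\|_0\lesssim h^{k+1}\|\bmdiv\bm{\sigma}\|_{k+1}\cdot h^2\|\bm{e}_u\|_0$. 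For the second term, Cauchy--Schwarz with $\|\mathcal{A}\bm{\tau}\|\le\tfrac12\|\bm{\tau}\|$, the primal bound $h^{k+2}$, and the dual stress estimate $\|\bm{\psi}-\bm{\psi}_h\|_0\lesssim h\|\bm{\psi}\|_1\lesssim h\|\bm{e}_u\|_0$ (from \eqref{eqsoei11} with $m=1$) give $\lesssim h^{k+2}\|\bm{\sigma}\|_{k+2}\cdot h\|\bm{e}_u\|_0$. Adding the two and dividing by $\|\bm{e}_u\|_0$ yields the claim. The main obstacle is arranging the duality so that the extra order comes \emph{paired}: one factor of $h$ from the low dual regularity against the sharp $h^{k+2}$ primal stress error, and $h^2$ from the $H^2$-regular dual velocity against the $h^{k+1}$ velocity projection error; the commuting property $\bmdiv\bm{\Phi}_h\subseteq\bm{V}_h$ is exactly what makes the projection $\bm{P}_h\bm{u}$ (rather than $\bm{u}$) appear and drives both cancellations.
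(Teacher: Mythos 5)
Your argument is correct, but note that the paper does not prove this lemma at all: it is imported verbatim as \cite[Theorem~3.4]{CCJGEJP}, so there is no in-paper proof to match against. What you have written is the standard Aubin--Nitsche duality argument for superconvergence of mixed methods, which is essentially the mechanism behind the cited result. All the key steps check out: the commuting property $\bmdiv\bm{\Phi}_h\subseteq\bm{V}_h$ does give $\bmdiv\bm{\sigma}_{\lambda,h}=-\lambda\bm{P}_h\bm{u}$ and hence $\bmdiv(\bm{\sigma}-\bm{\sigma}_{\lambda,h})=-\lambda(\bm{u}-\bm{P}_h\bm{u})$; testing the discrete dual problem with $\bm{v}_h=\bm{e}_u$ and the first equations of \eqref{nmsprob12} and \eqref{nmsprob131} with $\bm{\tau}=\bm{\tau}_h=\bm{\psi}_h$ does collapse to $\|\bm{e}_u\|_0^2=(\mathcal{A}(\bm{\sigma}-\bm{\sigma}_{\lambda,h}),\bm{\psi}_h)$ (the swap of $\bm{P}_h\bm{u}$ for $\bm{u}$ being legitimate because $\bmdiv\bm{\psi}_h\in\bm{V}_h$); and the two powers of $h$ are gained exactly as you say, $h^2\|\bm{w}\|_2$ against the $L^2$-orthogonality of $\bm{u}-\bm{P}_h\bm{u}$ in one term and $h\|\bm{\psi}\|_1$ from \eqref{eqsoei11} with $m=1$ against the sharp $h^{k+2}$ stress error in the other, both controlled by $\|\bm{e}_u\|_0$ through the regularity \eqref{soreg11}. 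The only hypotheses you implicitly use that deserve a word are that $\bm{\sigma}-\bm{\sigma}_{\lambda,h}$ has vanishing mean trace (true, since both members lie in $\bm{\Phi}$) and that $k\ge 1$ so that $\bm{V}_h\supseteq P_1$ yields the $h^2$ projection estimate for $\bm{w}\in\bm{H}^2(\Omega)$; both hold in the Arnold--Winther setting. In short: a complete, correct, self-contained proof of a statement the paper only cites.
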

Next, we prove a similar estimate for the eigenfunction $\bm{u}_h$ which is used 
in Section~\ref{secpost} to derive an error estimate for the post-processed eigenfunction.
\begin{theorem}\label{errthm11}
For sufficiently smooth boundary $\partial \Omega, \bm{\sigma}\in\bm{H}^{k+2}(\Omega;\mathbb{S})$, 
$\bmdiv\bm{\sigma}\in \bm{H}^{k+1}(\Omega)$, it holds that
\begin{align}
    \|\bm{P}_h\bm{u}-\bm{u}_h\|_0&\lesssim h^{k+3}( h^{k-1} + \|\bm{\sigma}\|_{k+2}+\|\bmdiv\bm{\sigma}\|_{k+1}).
\end{align}
\end{theorem}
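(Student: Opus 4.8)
The plan is to exploit the fact that $\bm{P}_h\bm{u}$ is an approximate eigenvector of the discrete solution operator $T_h$ and to compare it to the exact discrete eigenfunction $\bm{u}_h$ by a spectral gap argument, rather than by a direct triangle inequality through $\bm{u}_{\lambda,h}$. First I would record the structural facts about $T_h$ restricted to $\bm{V}_h$: from the symmetry of $\mathcal{A}$ and the mixed formulation \eqref{nmsprob131} one checks that $(T_h\bm{f},\bm{g})=(\mathcal{A}(S_h\bm{f}),S_h\bm{g})=(\bm{f},T_h\bm{g})$ for all $\bm{f},\bm{g}\in\bm{V}_h$, so $T_h|_{\bm{V}_h}$ is self-adjoint and positive on $\bm{V}_h$. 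Hence there is an $\bm{L}^2$-orthonormal eigenbasis $\{\bm{u}_{h,i}\}$ of $\bm{V}_h$ with $T_h\bm{u}_{h,i}=\mu_{h,i}\bm{u}_{h,i}$ and $\mu_{h,i}=1/\lambda_{h,i}$, and I identify the target pair as $(\bm{u}_{h,1},\mu_{h,1})=(\bm{u}_h,1/\lambda_h)$.

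The key step is to show that $\bm{P}_h\bm{u}$ is a near eigenvector with residual of order $h^{k+3}$. Writing $\mu=1/\lambda$ and using that $\bm{u}$ and $\bm{P}_h\bm{u}$ produce the same right hand side functional on $\bm{V}_h$, so that $T_h\bm{P}_h\bm{u}=T_h\bm{u}=\mu\,\bm{u}_{\lambda,h}$ (recall $\bm{u}_{\lambda,h}=\lambda T_h\bm{u}$), I obtain the identity $T_h\bm{P}_h\bm{u}-\mu\,\bm{P}_h\bm{u}=\mu(\bm{u}_{\lambda,h}-\bm{P}_h\bm{u})$. Lemma~\ref{mposterr12} then bounds the residual, $\|T_h\bm{P}_h\bm{u}-\mu\,\bm{P}_h\bm{u}\|_0\lesssim h^{k+3}(\|\bm{\sigma}\|_{k+2}+\|\bmdiv\bm{\sigma}\|_{k+1})=:R$. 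Expanding $\bm{P}_h\bm{u}=\sum_i a_i\bm{u}_{h,i}$, the squared residual equals $\sum_i a_i^2(\mu_{h,i}-\mu)^2=R^2$; since $\lambda$ is simple, the norm convergence \eqref{convere11} of $T_h$ to $T$ guarantees that for $h$ small the eigenvalues $\mu_{h,i}$, $i\ge2$, stay at distance bounded below by a fixed gap $\gamma>0$ from $\mu$, whence $\sum_{i\ge2}a_i^2\le R^2/\gamma^2$ and therefore $\|\bm{P}_h\bm{u}-a_1\bm{u}_h\|_0\lesssim R$.

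It remains to control the coefficient $a_1=(\bm{P}_h\bm{u},\bm{u}_h)$. Since $\bm{u}_h\in\bm{V}_h$ we have $a_1=(\bm{u},\bm{u}_h)$, and choosing the sign of $\bm{u}_h$ so that this inner product is positive, the identity $\|\bm{u}-\bm{u}_h\|_0^2=2-2(\bm{u},\bm{u}_h)$ already used in Lemma~\ref{eigerrlem11} gives $|a_1-1|=\tfrac12\|\bm{u}-\bm{u}_h\|_0^2\lesssim h^{2k+2}$ by \eqref{erreigu11}. Combining with $\|\bm{u}_h\|_0=1$, the triangle inequality $\|\bm{P}_h\bm{u}-\bm{u}_h\|_0\le\|\bm{P}_h\bm{u}-a_1\bm{u}_h\|_0+|a_1-1|$ yields the claimed bound $\lesssim h^{k+3}(\|\bm{\sigma}\|_{k+2}+\|\bmdiv\bm{\sigma}\|_{k+1})+h^{2k+2}=h^{k+3}(h^{k-1}+\|\bm{\sigma}\|_{k+2}+\|\bmdiv\bm{\sigma}\|_{k+1})$.

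The main obstacle is the spectral gap argument in the second paragraph: it requires both the self-adjointness of $T_h|_{\bm{V}_h}$, so that an orthonormal eigenbasis exists and the residual splits cleanly across eigendirections, and a uniform-in-$h$ separation of $1/\lambda$ from the remaining discrete eigenvalues. The latter is where simplicity of $\lambda$ is essential, and it must be extracted from the convergence $\|T-T_h\|_{\mathcal{L}(\bm{V},\bm{V})}\to0$. I expect this to be the delicate point because a direct stability estimate for $\|\bm{u}_{\lambda,h}-\bm{u}_h\|_0$ would instead produce a circular bound in terms of $\|\bm{P}_h\bm{u}-\bm{u}_h\|_0$ with a constant that cannot in general be absorbed, which is precisely what routing through the gap argument circumvents.
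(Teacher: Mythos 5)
Your proof is correct, but it takes a genuinely different route from the paper's. The paper splits $\|\bm{P}_h\bm{u}-\bm{u}_h\|_0\le\|\bm{P}_h\bm{u}-\bm{u}_{\lambda,h}\|_0+\|\bm{u}_{\lambda,h}-\bm{u}_h\|_0$, handles the first term by Lemma~\ref{mposterr12}, and devotes a separate lemma (Lemma~\ref{mposterr12311}) to the second: there the key tool is the bounded inverse of $(I-\lambda T)$ on the \emph{continuous} orthogonal complement $\bm{U}^{\perp,\bm{V}}$, followed by a four-term decomposition $\mathcal{I}_1,\dots,\mathcal{I}_4$, a duality computation for $(\bm{u}_{\lambda,h}-\bm{u},\bm{u})$, and an absorption step for small $h$ because two of the terms still contain $\|\bm{u}_{\lambda,h}-\bm{u}_h\|_0$. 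You instead treat $\bm{P}_h\bm{u}$ directly as an approximate eigenvector of the discrete self-adjoint operator $T_h|_{\bm{V}_h}$, use the identity $T_h\bm{P}_h\bm{u}-\mu\bm{P}_h\bm{u}=\mu(\bm{u}_{\lambda,h}-\bm{P}_h\bm{u})$ (so Lemma~\ref{mposterr12} again supplies the $h^{k+3}$ residual), and conclude by the spectral theorem together with a uniform discrete spectral gap; the $h^{2k+2}=h^{k+3}\cdot h^{k-1}$ contribution enters through $|a_1-1|=\tfrac12\|\bm{u}-\bm{u}_h\|_0^2$ exactly as the paper's $\tfrac12\|\bm{u}-\bm{u}_h\|_0^2$ term does. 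The trade-off: your argument is shorter and avoids the absorption step, but it requires transferring the gap from $T$ to $T_h$ (uniform separation of $\mu_{h,i}$, $i\ge2$, from $1/\lambda$), which is standard under the norm convergence \eqref{convere11} for compact self-adjoint operators but should be stated as a citation to the Babu\v{s}ka--Osborn theory rather than asserted; you should also say ``positive semi-definite'' rather than ``positive'' for $T_h|_{\bm{V}_h}$, since $T_h$ annihilates discrete gradients --- this is harmless because those directions carry eigenvalue $0$, which is also separated from $\mu=1/\lambda$. Both proofs use simplicity of $\lambda$ and the consistent sign normalization of $\bm{u}_h$ in the same essential way.
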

The proof of Theorem~\ref{errthm11} does not follow directly from Lemma~\ref{mposterr12}, 
because the eigenvalue problem does not satisfy an orthogonality condition, i.e.
$(\bmdiv  (\bm{\sigma}-\bm{\sigma}_{h}), \bm{v}_h)\neq 0$,
therefore we need the following lemma.
\begin{lemma}\label{mposterr12311}
For sufficiently smooth boundary $\partial \Omega$, 
$\bm{\sigma}\in\bm{H}^{k+2}(\Omega;\mathbb{S})$, 
and $\bmdiv  \bm{\sigma}\in \bm{H}^{k+1}(\Omega)$,
the difference between the discrete eigenfunction $\bm{u}_h$ and the discrete solution of 
the associated source problem $\bm{u}_{\lambda,h}$ can be estimated by
\begin{align*}
    \|\bm{u}_{\lambda,h}-\bm{u}_h\|\lesssim h^{k+3}(h^{k-1} + \|\bm{\sigma}\|_{k+2}+\|\bmdiv \bm{\sigma}\|_{k+1}).
\end{align*}
\end{lemma}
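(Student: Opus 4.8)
The plan is to exploit that both the source approximation $\bm{u}_{\lambda,h}=\lambda T_h\bm{u}$ and the discrete eigenfunction $\bm{u}_h$ are images of the self-adjoint discrete solution operator $T_h$ restricted to $\bm{V}_h$, and to turn their difference into a perturbed discrete eigen-relation. Writing $\bm{e}:=\bm{u}_{\lambda,h}-\bm{u}_h\in\bm{V}_h$ and recalling that the discrete eigenvalue problem \eqref{mfpro11} is equivalent to $\bm{u}_h=\lambda_h T_h\bm{u}_h$, linearity of $T_h$ gives $\bm{e}=T_h\bm{g}$ with $\bm{g}=\lambda\bm{u}-\lambda_h\bm{u}_h$. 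I would then split $\bm{g}=\lambda(\bm{u}-\bm{u}_h)+(\lambda-\lambda_h)\bm{u}_h$ and insert the telescoping $\bm{u}-\bm{u}_h=(\bm{u}-\bm{P}_h\bm{u})+(\bm{P}_h\bm{u}-\bm{u}_{\lambda,h})+\bm{e}$. Since the data enter \eqref{nmsprob131} only through $(\bm{f},\bm{v}_h)$, the operator $T_h$ factors through $\bm{P}_h$, so $T_h(\bm{u}-\bm{P}_h\bm{u})=0$; together with $T_h\bm{u}_h=\lambda_h^{-1}\bm{u}_h$ this yields the identity $(\mathrm{Id}-\lambda T_h)\bm{e}=\lambda T_h(\bm{P}_h\bm{u}-\bm{u}_{\lambda,h})+(\lambda-\lambda_h)T_h\bm{u}_h$.

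Next I would decompose $\bm{e}$ into its component along $\bm{u}_h$ and its $\bm{L}^2$-orthogonal complement in $\bm{V}_h$. Let $\Pi_h$ be the orthogonal projection in $\bm{V}_h$ onto the complement of $\mathrm{span}\{\bm{u}_h\}$. Because $\bm{u}_h$ is an eigenvector of the self-adjoint $T_h$, $\Pi_h$ commutes with $T_h$, hence with $\mathrm{Id}-\lambda T_h$, and $\Pi_h T_h\bm{u}_h=\lambda_h^{-1}\Pi_h\bm{u}_h=0$. Applying $\Pi_h$ to the identity therefore deletes the $(\lambda-\lambda_h)T_h\bm{u}_h$ term and leaves $(\mathrm{Id}-\lambda T_h)\Pi_h\bm{e}=\lambda\,\Pi_h T_h(\bm{P}_h\bm{u}-\bm{u}_{\lambda,h})$. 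Using the uniform bound $\|T_h\|\lesssim 1$ (from the convergence $T_h\to T$) and Lemma~\ref{mposterr12}, this controls $\|\Pi_h\bm{e}\|_0\lesssim\|\bm{P}_h\bm{u}-\bm{u}_{\lambda,h}\|_0\lesssim h^{k+3}(\|\bm{\sigma}\|_{k+2}+\|\bmdiv\bm{\sigma}\|_{k+1})$, provided $(\mathrm{Id}-\lambda T_h)^{-1}$ is bounded uniformly in $h$ on $\mathrm{ran}\,\Pi_h$.

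For the remaining component I would compute the scalar $(\bm{e},\bm{u}_h)$ directly. By self-adjointness $(\bm{u}_{\lambda,h},\bm{u}_h)=\lambda(\bm{u},T_h\bm{u}_h)=(\lambda/\lambda_h)(\bm{u},\bm{u}_h)$, and since $\|\bm{u}_h\|_0=1$ this gives $(\bm{e},\bm{u}_h)=(\lambda/\lambda_h)(\bm{u},\bm{u}_h)-1$. Fixing the sign of $\bm{u}_h$ so that $(\bm{u},\bm{u}_h)=1-\tfrac12\|\bm{u}-\bm{u}_h\|_0^2$ (as in Lemma~\ref{eigerrlem11}), this equals $(\lambda-\lambda_h)/\lambda_h-(\lambda/2\lambda_h)\|\bm{u}-\bm{u}_h\|_0^2$, which by the eigenvalue estimate \eqref{eigenlam12} and the eigenfunction estimate \eqref{erreigu11}, both with $m=k+1$, is $\lesssim h^{2k+2}=h^{k+3}h^{k-1}$. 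Since $\bm{e}=\Pi_h\bm{e}+(\bm{e},\bm{u}_h)\bm{u}_h$ is an orthogonal splitting, $\|\bm{e}\|_0^2=\|\Pi_h\bm{e}\|_0^2+|(\bm{e},\bm{u}_h)|^2$, and combining the two bounds yields the claimed estimate.

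I expect the one genuinely nontrivial point to be the uniform-in-$h$ invertibility of $\mathrm{Id}-\lambda T_h$ on $\mathrm{ran}\,\Pi_h$, i.e.\ a discrete spectral gap: the eigenvalues of $\lambda T_h$ other than the one attached to $\bm{u}_h$ must stay bounded away from $1$. This is where the assumption that $\lambda$ is a simple eigenvalue, together with the operator-norm convergence $\|T-T_h\|_{\mathcal{L}(\bm{V},\bm{V})}\to0$ from \eqref{convere11}, enters, guaranteeing for $h$ small enough a gap inherited from the continuous spectrum. Everything else in the argument reduces to the already established estimates \eqref{erreigu11}, \eqref{eigenlam12} and Lemma~\ref{mposterr12}.
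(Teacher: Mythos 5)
Your argument is correct and reaches the stated bound, but it executes the proof differently from the paper in two substantive ways. First, you project off the \emph{discrete} eigenfunction $\bm{u}_h$ and invert $\mathrm{Id}-\lambda T_h$ on the $T_h$-invariant complement of $\mathrm{span}\{\bm{u}_h\}$ in $\bm{V}_h$; the paper instead projects off the \emph{continuous} eigenfunction $\bm{u}$ (via $\bm{\delta}_h:=\bm{u}_{\lambda,h}-\bm{u}_h-(\bm{u}_{\lambda,h}-\bm{u}_h,\bm{u})\bm{u}$) and uses boundedness of $(I-\lambda T)^{-1}$ on $\bm{U}^{\perp,\bm{V}}$, which is immediate from the continuous spectral theory but forces it to carry perturbation terms $(\lambda_hT_h-\lambda T)(\bm{u}_{\lambda,h}-\bm{u}_h)$ that are only absorbed for $h$ small. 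Your route trades that bookkeeping for the uniform discrete spectral gap on $\mathrm{ran}\,\Pi_h$ — you correctly flag this as the one nontrivial point, and it does follow from simplicity of $\lambda$ together with the norm convergence \eqref{convere11} for $h$ small enough, so nothing is missing, but note that both routes ultimately require $h$ small. Second, your treatment of the scalar component is genuinely slicker: using self-adjointness of $T_h$ to get $(\bm{u}_{\lambda,h},\bm{u}_h)=(\lambda/\lambda_h)(\bm{u},\bm{u}_h)$ reduces $(\bm{e},\bm{u}_h)$ to $(\lambda-\lambda_h)/\lambda_h$ plus a term quadratic in $\|\bm{u}-\bm{u}_h\|_0$, both $O(h^{2k+2})$ by \eqref{eigenlam12} and \eqref{erreigu11}; the paper instead estimates $|(\bm{u}_{\lambda,h}-\bm{u},\bm{u})|$ through a page of variational manipulations (its equations \eqref{newdispost13}--\eqref{eq:J2}) relying on the Galerkin orthogonality of the source problem. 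The common ingredients are Lemma~\ref{mposterr12}, the observation that $T_h(\bm{u}-\bm{P}_h\bm{u})=0$, and the identity $1-(\bm{u},\bm{u}_h)=\tfrac12\|\bm{u}-\bm{u}_h\|_0^2$; your exact cancellation of the $(\lambda-\lambda_h)T_h\bm{u}_h$ term under $\Pi_h$ and your orthogonal splitting $\|\bm{e}\|_0^2=\|\Pi_h\bm{e}\|_0^2+|(\bm{e},\bm{u}_h)|^2$ make the final assembly cleaner than the paper's triangle-inequality-plus-absorption step.
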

\begin{proof}
First we define some relations
\begin{align*}
    \lambda T \bm{u}=\bm{u},\quad \lambda_h T_h \bm{u}_h
    =\bm{u}_h\quad \mbox{and}\quad \lambda T_h \bm{u}=\bm{u}_{\lambda,h}.
\end{align*}
Since $\lambda$ is a simple eigenvalue, the eigenspace of the eigenvalue $\lambda$ is spanned by $\bm{u}$. 
The operator $T:\bm{V}\rightarrow\bm{V}$ is self adjoint. Therefore 
the orthogonal complement of $\bm{u}$ is a $T$-invariant subspace denoted by $\bm{U}^{\perp,\bm{V}}$.
Moreover the eigenvalue $\lambda$ does not belong to the spectrum 
of $T|_{{\bm{U}^{\perp,\bm{V}}}}$ which is defined as 
$T|_{{\bm{U}^{\perp,\bm{V}}}}: {\bm{U}^{\perp,\bm{V}}}\rightarrow {\bm{U}^{\perp,\bm{V}}}$. 
Thus, we can define the following invertible map
\begin{align*}
    (I-\lambda T): {\bm{U}^{\perp,\bm{V}}}\rightarrow {\bm{U}^{\perp,\bm{V}}}.
\end{align*}
Here $(I-\lambda T)^{-1}$ is bounded.
Define $\bm{\delta}_h:= \bm{u}_{\lambda,h}-\bm{u}_h-(\bm{u}_{\lambda,h}-\bm{u}_h,\bm{u})\bm{u}$.
Using $\|\bm{u}\|_0=1$, it follows
\begin{align*}
    (\bm{\delta}_h,\bm{u})=(\bm{u}_{\lambda,h}-\bm{u}_h-(\bm{u}_{\lambda,h}-\bm{u}_h,\bm{u})\bm{u},\bm{u})
    =0.
\end{align*}
Hence we have
\begin{align*}
    \bm{\delta}_h\in {\bm{U}^{\perp,\bm{V}}}\quad\mbox{and}\quad \|\bm{\delta}_h\|_0
    \lesssim \|(I-\lambda T)\bm{\delta}_h\|_0.
\end{align*}
Since $(I-\lambda T)\bm{u}=0$, the following estimate holds
\begin{align}\label{mpost110}
    \|\bm{\delta}_h\|_0\lesssim \|(I-\lambda T)(\bm{u}_{\lambda,h}-\bm{u}_h)\|_0.
\end{align}
Moreover, we obtain the following inequality 
\begin{align}\label{mpost11}
\begin{split}
    &(I-\lambda T)(\bm{u}_{\lambda,h}-\bm{u}_h)\\
    &\quad=(\lambda_h T_h-\lambda T)(\bm{u}_{\lambda,h}-\bm{u}_h)+\bm{u}_{\lambda,h}-\bm{u}_h
    -\lambda_h T_h(\bm{u}_{\lambda,h}-\bm{u})-\lambda_h T_h(\bm{u}-\bm{u}_h)\\
    &\quad=(\lambda_h T_h-\lambda T)(\bm{u}_{\lambda,h}-\bm{u}_h)+(\lambda-\lambda_h)T_h\bm{u}
    -\lambda_h T_h(\bm{u}_{\lambda,h}-\bm{u}).
\end{split}
\end{align} 
Inserting the result of \eqref{mpost11} into \eqref{mpost110} and using the triangle inequality, implies
\begin{align*}
    \|\bm{\delta}_h\|_0
    &\lesssim \|(\lambda_hT_h-\lambda T_h)(\bm{u}_{\lambda,h}-\bm{u}_h)\|_0
    +\|(\lambda T_h-\lambda T)(\bm{u}_{\lambda,h}-\bm{u}_h)\|_0\\
    &\quad+\|(\lambda-\lambda_h)T_h\bm{u}\|_0
    +\|\lambda_h T_h(\bm{u}_{\lambda,h}-\bm{u})\|_0,\\
    &\lesssim \mathcal{I}_1+\mathcal{I}_2+\mathcal{I}_3+\mathcal{I}_4,
\end{align*}
where 
\begin{align*}
    \mathcal{I}_1&=\|(\lambda_h T_h-\lambda T_h)(\bm{u}_{\lambda,h}-\bm{u}_h)\|_0,\quad
    \mathcal{I}_2=\|(\lambda T_h-\lambda T)(\bm{u}_{\lambda,h}-\bm{u}_h)\|_0,\\
    \mathcal{I}_3&=\|(\lambda-\lambda_h)T_h\bm{u}\|_0,\quad\quad\quad\quad\quad\quad\;\;
    \mathcal{I}_4=\|\lambda_h T_h(\bm{u}_{\lambda,h}-\bm{u})\|_0.
\end{align*}
Using the boundedness of $T_h$, the estimates of $\mathcal{I}_1$ and $\mathcal{I}_3$ read
\begin{align}\label{eq:I13}
    \mathcal{I}_1&\lesssim |\lambda-\lambda_h| \|\bm{u}_{\lambda,h}-\bm{u}_h\|_0 \quad\mbox{and}\quad
    \mathcal{I}_3\lesssim |\lambda-\lambda_h|.
\end{align}
Using $\bm{f}=(\bm{u}_{\lambda,h}-\bm{u}_h)$ in \eqref{eqsoei13} and inserting the regularity result \eqref{soreg11}, we obtain
\begin{align}\label{eq:I2}
    \mathcal{I}_2=\|(\lambda T_h-\lambda T)(\bm{u}_{\lambda,h}-\bm{u}_h)\|_0\lesssim \lambda h\|(\bm{u}_{\lambda,h}-\bm{u}_h)\|_0.
\end{align}
Adding and subtracting $T_h(\bm{P}_h\bm{u})$ in $\mathcal{I}_4$ leads to
\begin{align}\label{lemph11}
    \mathcal{I}_4\leq \lambda_h \|T_h(\bm{u}_{\lambda,h}-\bm{P}_h\bm{u})\|_0
    +\lambda_h \|T_h(\bm{P}_h\bm{u}-\bm{u})\|_0.
\end{align}
Applying Lemma \ref{mposterr12} in first term of the estimate \eqref{lemph11}, implies
\begin{align*}
    \|T_h(\bm{u}_{\lambda,h}-\bm{P}_h\bm{u})\|_0\lesssim h^{k+3}(\|\bm{\sigma}\|_{k+2}+\|\bmdiv\bm{\sigma}\|_{k+1}).
\end{align*} 
Note that the second term in the estimate \eqref{lemph11} is equal to zero due to definition of $T_h$,
since the right hand side of 
problem \eqref{nmsprob131} vanishes for $\bm{f}= \bm{P}_h\bm{u}-\bm{u}$. 
Hence 
\begin{align*}
    \mathcal{I}_4\lesssim \lambda_h h^{k+3}(\|\bm{\sigma}\|_{k+2}+\|\bmdiv\bm{\sigma}\|_{k+1}).
\end{align*} 
Using the definition of $\bm{\delta}_h$, leads to
\begin{align}\label{deltapost12}
    \|\bm{u}_{\lambda,h}-\bm{u}_h\|_0\leq \|\bm{\delta}_h\|_0+\|(\bm{u}_{\lambda,h}-\bm{u}_h,\bm{u})\bm{u}\|_0.
\end{align}
The first term of the estimate \eqref{deltapost12} is already estimated. 
To estimate the second term, observe that with $\|\bm{u}\|_0=1$, we get
\begin{align*}
    \|(\bm{u}_{\lambda,h}-\bm{u}_h,\bm{u})\bm{u}\|_0=|(\bm{u}_{\lambda,h}-\bm{u}_h,\bm{u})|.
\end{align*} 
Moreover, we have
\begin{align*}
    \|(\bm{u}_{\lambda,h}-\bm{u}_h,\bm{u})\bm{u}\|_0\leq |(\bm{u}_{\lambda,h}-\bm{u},\bm{u})|+|(\bm{u}-\bm{u}_h,\bm{u})|.
\end{align*}
Since $\|\bm{u}\|_0=1$ and $\|\bm{u}_h\|_0=1$, we get
\begin{align}\label{eq:J1}
    |(\bm{u}-\bm{u}_h,\bm{u})| = 1-(\bm{u}_h,\bm{u})  = \frac{1}{2}\|\bm{u}-\bm{u}_h\|_0^2 \lesssim h^{2k+2}.
\end{align}
Next, we estimate $|(\bm{u}_{\lambda,h}-\bm{u},\bm{u})|$. 
Choosing the test functions $\bm{\tau}=\bm{\sigma}_{\lambda,h}-\bm{\sigma}$ and
$\bm{v}=\bm{u}_{\lambda,h}-\bm{u}$ in \eqref{prob12}, gives
\begin{align*}
    (\mathcal{A}\bm{\sigma},\bm{\sigma}_{\lambda,h}-\bm{\sigma})+(\bmdiv  (\bm{\sigma}_{\lambda,h}-\bm{\sigma}),\bm{u})&=0,\\
    (\bmdiv  \bm{\sigma}, \bm{u}_{\lambda,h}-\bm{u})& =-\lambda (\bm{u},\bm{u}_{\lambda,h}-\bm{u}).
\end{align*}
From that, we obtain
\begin{align*}
    -\lambda(\bm{u}_{\lambda,h}-\bm{u},\bm{u})=(\bmdiv  \bm{\sigma}, \bm{u}_{\lambda,h}-\bm{u})
    +(\mathcal{A}\bm{\sigma},\bm{\sigma}_{\lambda,h}-\bm{\sigma})+(\bmdiv  (\bm{\sigma}_{\lambda,h}-\bm{\sigma}),\bm{u}).
\end{align*}
Moreover, we have
\begin{align}\label{newdispost13}
\begin{split}
    -\lambda(\bm{u}_{\lambda,h}-\bm{u},\bm{u})
    &=(\bmdiv  (\bm{\sigma}-\bm{\sigma}_{\lambda,h}), \bm{u}_{\lambda,h}-\bm{u})+(\bmdiv  \bm{\sigma}_{\lambda,h}, \bm{u}_{\lambda,h}-\bm{u})\\
    &\quad+(\mathcal{A}(\bm{\sigma}-\bm{\sigma}_{\lambda,h}),\bm{\sigma}_{\lambda,h}-\bm{\sigma})+(\mathcal{A}\bm{\sigma}_{\lambda,h},\bm{\sigma}_{\lambda,h}-\bm{\sigma})\\
    &\quad+(\bmdiv  (\bm{\sigma}_{\lambda,h}-\bm{\sigma}),\bm{u}-\bm{u}_{\lambda,h})+(\bmdiv  (\bm{\sigma}_{\lambda,h}-\bm{\sigma}),\bm{u}_{\lambda,h}).
\end{split}
\end{align}
Choosing $\bm{\tau}_h=\bm{\sigma}_{\lambda,h}$, $\bm{v}_h=\bm{u}_{\lambda,h}$, $\bm{f}=\lambda\bm{u}$ in \eqref{nmsprob131} and  $\bm{\tau}=\bm{\sigma}_{\lambda,h}$, 
$\bm{v}=\bm{u}_{\lambda,h}$ in \eqref{prob12}, and subtracting \eqref{prob12} from \eqref{nmsprob131}, 
we obtain
\begin{align}\label{newdispost12}
\begin{split}
    (\mathcal{A}(\bm{\sigma}_{\lambda,h}-\bm{\sigma}),\bm{\sigma}_{\lambda,h})+(\bmdiv  \bm{\sigma}_{\lambda,h},\bm{u}_{\lambda,h}-\bm{u})&=0,\\
    (\bmdiv  (\bm{\sigma}_{\lambda,h}-\bm{\sigma}), \bm{u}_{\lambda,h})& =0.
\end{split}
\end{align}
Inserting the result from \eqref{newdispost12} in \eqref{newdispost13}, implies
\begin{align}\label{eq:J2}
\begin{split}
    -\lambda(\bm{u}_{\lambda,h}-\bm{u},\bm{u})&=(\mathcal{A}(\bm{\sigma}-\bm{\sigma}_{\lambda,h}),\bm{\sigma}_{\lambda,h}-\bm{\sigma})
    +2(\bmdiv(\bm{\sigma}-\bm{\sigma}_{\lambda,h}), \bm{u}_{\lambda,h}-\bm{u})\\
    &\lesssim \|\bm{\sigma}-\bm{\sigma}_{\lambda,h}\|^2_0
    +\|\bmdiv(\bm{\sigma}-\bm{\sigma}_{\lambda,h})\|^2_0+\|\bm{u}-\bm{u}_{\lambda,h}\|_0^2\\
    &\lesssim h^{2k+4}\|\bm{\sigma}\|_{k+2}^2+h^{2k+2}\|\bmdiv\bm{\sigma}\|_{k+1}^2 + h^{2k+2}\|\bm{u}\|_{k+2}^2\\
    &\lesssim h^{2k+2}.
\end{split}
\end{align}
Summing up the estimates \eqref{eq:I13}--\eqref{lemph11} and \eqref{eq:J1}, \eqref{eq:J2}, we obtain
\begin{align*}
    \|\bm{u}_{\lambda,h}-\bm{u}_h\|_0
    &\lesssim |\lambda-\lambda_h| \|\bm{u}_{\lambda,h}-\bm{u}_h\|_0
    + h\|\bm{u}_{\lambda,h}-\bm{u}_h\|_0 
    + |\lambda-\lambda_h|\\
    &\quad+h^{k+3}(\|\bm{\sigma}\|_{k+2}+\|\bmdiv \bm{\sigma}\|_{k+1})
    + h^{2k+2}.
\end{align*}
Choosing $h$ small enough and using $|\lambda-\lambda_h|\lesssim h^{2k+2}$, the following estimate holds
\begin{align*}
    \|\bm{u}_{\lambda,h}-\bm{u}_h\|_0\lesssim h^{k+3}(h^{k-1} + \|\bm{\sigma}\|_{k+2}+\|\bmdiv \bm{\sigma}\|_{k+1}).
\end{align*}
\end{proof}
\begin{proof}[Proof of Theorem \ref{errthm11}]
Combining Lemma \ref{mposterr12} and \ref{mposterr12311}, implies the desired result.
\end{proof}
%
\section{Local post-processing}\label{secpost}
In this section we present an improved approximation of the eigenfunction 
by a local post-processing similar to the one in \cite{CCJGEJP} for the source problem. 
Define for $m\ge k+2$
\begin{align*}
    \bm{W}^*_h=\{\bm{v}\in \bm{L}^2(\Omega)\;|\;\bm{v}|_{K}\in P_m(K;\mathbb{R}^2)\;\mbox{for all}\;K\in\mathcal{T}_h\}.
\end{align*}
Choose $\bm{u}_h^*\in\bm{W}^*_h $ on each $K\in\mathcal{T}_h$ with $\bm{P}_K=\bm{P}_h|_K$ 
as a solution of the system
\begin{align}\label{postpro11}
\begin{split}
    \bm{P}_K\bm{u}_h^*&=\bm{u}_h,\\
    (\bm{\epsilon}(\bm{u}_h^*),\bm{\epsilon}(\bm{v}))_K&=(\mathcal{A}\bm{\sigma}_h,\bm{\epsilon}(\bm{v}))_K
    \quad\mbox{for all}\;\bm{v}\in(\bm{\delta}-\bm{P}_K) \bm{W}^*_h|_K.
\end{split}
\end{align}
Here, $\bm{u}_h^*$ is the Riesz representation of the linear functional $(\mathcal{A}\bm{\sigma}_h,\bm{\epsilon}(\cdot))_K$ in the 
Hilbert space 
\[
(\bm{\delta}-\bm{P}_K) \bm{W}^*_h|_K
\equiv\{\bm{v}_m\in P_m(K;\mathbb{R}^2)| (\bm{v}_m,\bm{w}_k)_K=0 \;\mbox{for all} \;\{\bm{w}_k\in P_k(K;\mathbb{R}^2) \},
\]
which is equipped with the scalar product $(\bm{\epsilon}(\cdot),\bm{\epsilon}(\cdot))_K$.
We define the local post-processing on each triangle with Lagrange multiplier 
$\bm{\mu}_k\in P_k(K;\mathbb{R}^2)$
as follows
\begin{align}\label{postiden11}
\begin{split}
    (\bm{\epsilon}(\bm{u}_h^*),\bm{\epsilon}(\bm{v}_m))_K+(\bm{\mu}_k,\bm{v}_m)_K
    &=(\mathcal{A}\bm{\sigma}_h,\bm{\epsilon}(\bm{v}_m))_K\quad\mbox{for all}\;\bm{v}_m\in P_m(K;\mathbb{R}^2),\\
    (\bm{u}_h^*,\bm{w}_k)_K&=(\bm{u}_h,\bm{w}_k)_K\quad\quad\;\;\mbox{for all}\;\bm{w}_k\in P_k(K;\mathbb{R}^2).
\end{split}
\end{align}  
From Korn's inequality, we get positive definiteness of $(\bm{\epsilon}(\cdot),\bm{\epsilon}(\cdot))_K$ 
on $(\bm{\delta}-\bm{P}_K) \bm{W}^*_h|_K$, and 
since $P_k(K;\mathbb{R}^2)\subset P_m(K;\mathbb{R}^2)$, we obtain 
\begin{align*}
    \sup_{0\neq\bm{v}_{m}\in P_m(K;\mathbb{R}^2)}\frac{(\bm{v}_m,\bm{\mu}_k)_K}{\|\bm{v}_m\|_{1,K}}\ge C\|\bm{\mu}_k\|_{0,K}
    \quad\mbox{for all}\;\bm{\mu}_k\in P_k(K;\mathbb{R}^2).
\end{align*}
Hence, there exist unique solutions on each triangle, c.f. \cite{FB}. Combining the identity 
$\mathcal{A}\bm{\sigma}=\bm{\epsilon}(\bm{u})$ and \eqref{postpro11}, gives the following error identity
\begin{align*}
    (\bm{\epsilon}(\bm{u}-\bm{u}_h^*),\bm{\epsilon}(\bm{v}))_K
    =(\mathcal{A}(\bm{\sigma}-\bm{\sigma}_h),\bm{\epsilon}(\bm{v}))_K\quad\mbox{for all}\;\bm{v}\in(\bm{\delta}-\bm{P}_K) \bm{W}^*_h|_K.
\end{align*}
\begin{theorem}\label{posterror11}
With sufficiently smooth boundary $\partial \Omega$, if $\bm{u}\in\bm{H}^{m+1}(\Omega)$,  
$\bm{\sigma}\in\bm{H}^{k+2}(\Omega;\mathbb{S})$, and $\bmdiv  \bm{\sigma}\in \bm{H}^{k+1}(\Omega)$ 
solve problem \eqref{pre12}, then the following estimates hold for the post-processed eigenfunction 
$\bm{u}^*_h\in\bm{W}^*_h$
\begin{align*}
    \|\bm{u}-\bm{u}_h^*\|_0
    &\lesssim h^{k+3}(h^{k-1}+\|\bm{\sigma}\|_{k+2}+\|\bmdiv\bm{\sigma}\|_{k+1}) +h^{m+1}\|\bm{u}\|_{m+1},\\
    \|\nabla_{\mathcal{T}_h}(\bm{u}-\bm{u}_h^*)\|_0
    &\lesssim h^{k+2}(h^{k-1}+\|\bm{\sigma}\|_{k+2}+\|\bmdiv\bm{\sigma}\|_{k+1}) +h^{m}\|\bm{u}\|_{m+1}.
\end{align*}
\end{theorem}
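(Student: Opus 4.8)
The plan is to estimate the post-processing error elementwise by splitting it, on each $K\in\mathcal{T}_h$, into the low-order part captured by $\bm{P}_K$ and the complementary high-order part, handling the latter through the error identity derived just before the theorem together with the Korn-type norm equivalence on $(\bm{\delta}-\bm{P}_K)\bm{W}^*_h|_K$ noted after \eqref{postiden11}. To obtain a polynomial with which to test, I would first introduce the elementwise $\bm{L}^2$-projection $\bm{u}_I\in\bm{W}^*_h$ of $\bm{u}$ onto $P_m(K;\mathbb{R}^2)$. Since $k\le m$, this projection preserves the moments against $P_k(K;\mathbb{R}^2)$, so $\bm{P}_K\bm{u}_I=\bm{P}_K\bm{u}=\bm{P}_h\bm{u}|_K$, and it satisfies the standard estimates $\|\bm{u}-\bm{u}_I\|_{0,K}\lesssim h_K^{m+1}\|\bm{u}\|_{m+1,K}$ and $\|\bm{\epsilon}(\bm{u}-\bm{u}_I)\|_{0,K}\lesssim h_K^{m}\|\bm{u}\|_{m+1,K}$. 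Writing $\bm{e}:=\bm{u}_I-\bm{u}_h^*\in\bm{W}^*_h$, the crucial point is that its low-order part equals $\bm{P}_K\bm{e}=\bm{P}_h\bm{u}-\bm{u}_h$, which does \emph{not} vanish as it would for the source problem, but is controlled by the superconvergent bound of Theorem~\ref{errthm11}.

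For the high-order part $(\bm{\delta}-\bm{P}_K)\bm{e}\in(\bm{\delta}-\bm{P}_K)\bm{W}^*_h|_K$ I would test the error identity with $\bm{v}=(\bm{\delta}-\bm{P}_K)\bm{e}$. Expanding $\bm{\epsilon}(\bm{u}-\bm{u}_h^*)=\bm{\epsilon}(\bm{u}-\bm{u}_I)+\bm{\epsilon}(\bm{P}_K\bm{e})+\bm{\epsilon}((\bm{\delta}-\bm{P}_K)\bm{e})$ and applying Cauchy--Schwarz yields
\begin{align*}
\|\bm{\epsilon}((\bm{\delta}-\bm{P}_K)\bm{e})\|_{0,K}
\lesssim \|\mathcal{A}(\bm{\sigma}-\bm{\sigma}_h)\|_{0,K}
+\|\bm{\epsilon}(\bm{u}-\bm{u}_I)\|_{0,K}
+\|\bm{\epsilon}(\bm{P}_K\bm{e})\|_{0,K},
\end{align*}
where the last term is treated by an inverse estimate, $\|\bm{\epsilon}(\bm{P}_K\bm{e})\|_{0,K}\lesssim h_K^{-1}\|\bm{P}_h\bm{u}-\bm{u}_h\|_{0,K}$, since $\bm{P}_K\bm{e}\in P_k(K;\mathbb{R}^2)$. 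The norm equivalence on the complement space (Korn, together with a Poincar\'e inequality since the moments against $P_k$, and in particular the rigid motions as $k\ge1$, vanish) then gives $\|(\bm{\delta}-\bm{P}_K)\bm{e}\|_{0,K}\lesssim h_K\|\bm{\epsilon}((\bm{\delta}-\bm{P}_K)\bm{e})\|_{0,K}$ and $\|\nabla((\bm{\delta}-\bm{P}_K)\bm{e})\|_{0,K}\lesssim \|\bm{\epsilon}((\bm{\delta}-\bm{P}_K)\bm{e})\|_{0,K}$.

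It then remains to assemble the decomposition $\bm{u}-\bm{u}_h^*=(\bm{u}-\bm{u}_I)+\bm{P}_K\bm{e}+(\bm{\delta}-\bm{P}_K)\bm{e}$ and sum over $K\in\mathcal{T}_h$. For the $\bm{L}^2$ estimate the three contributions give respectively $h^{m+1}\|\bm{u}\|_{m+1}$, the superconvergent term $\|\bm{P}_h\bm{u}-\bm{u}_h\|_0$ of Theorem~\ref{errthm11}, and — after inserting $\|\mathcal{A}(\bm{\sigma}-\bm{\sigma}_h)\|_0\le\tfrac12\|\bm{\sigma}-\bm{\sigma}_h\|_0\lesssim h^{k+2}$ from \eqref{erreigs12} — the terms $h^{k+3}\|\bm{\sigma}\|_{k+2}$, $h^{m+1}\|\bm{u}\|_{m+1}$ and again $\|\bm{P}_h\bm{u}-\bm{u}_h\|_0$, which together yield the first claimed bound. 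For the gradient estimate the same three pieces are differentiated: the first contributes $h^{m}\|\bm{u}\|_{m+1}$, the second, through the inverse estimate, $h^{-1}\|\bm{P}_h\bm{u}-\bm{u}_h\|_0$, and the third $\|\bm{\epsilon}((\bm{\delta}-\bm{P}_K)\bm{e})\|_0$, which produce the second claimed bound.

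The step I expect to be most delicate is the bookkeeping of the $h$-powers in the gradient estimate. Because the low-order part $\bm{P}_h\bm{u}-\bm{u}_h$ only converges rather than vanishing, the inverse estimate costs a factor $h^{-1}$; it is precisely the superconvergence order $h^{k+3}$ supplied by Theorem~\ref{errthm11} that absorbs this loss and still leaves the optimal order $h^{k+2}$. This is exactly why one cannot reduce the argument to the source-problem post-processing of Lemma~\ref{mposterr12} alone, and why the sharper Theorem~\ref{errthm11} is needed.
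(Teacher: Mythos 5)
Your proof is correct and follows essentially the same route as the paper: the same three-way splitting of $\bm{u}-\bm{u}_h^*$ via the elementwise $\bm{L}^2$-projection onto $\bm{W}^*_h$ (your $\bm{u}_I$ is the paper's $\hat{\bm{u}}$), the same use of Theorem~\ref{errthm11} to control the low-order piece $\bm{P}_h\bm{u}-\bm{u}_h$, and the same inverse-estimate bookkeeping for the gradient bound. The only difference is that you derive the estimate for the complement piece $(\bm{\delta}-\bm{P}_K)\bm{e}$ in full from the error identity plus an elementwise Poincar\'e--Korn inequality, whereas the paper quotes it from the source-problem analysis in \cite[Theorem 4.1]{CCJGEJP}; your version is arguably the more careful one, since it explicitly carries the non-vanishing low-order contribution $\bm{P}_h\bm{u}-\bm{u}_h$ through that step instead of relying on the source-problem orthogonality.
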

\begin{proof}
Let $\hat{\bm{u}}$ denote the $\bm{L}^{2}$ projection of $\bm{u}$ onto $\bm{W}^{*}_h$. 
From the triangle inequality we get
\begin{align}\label{eq:thm4.1:rhs}
    \|\bm{u}-\bm{u}_h^*\|_0\leq \|\bm{u}-\hat{\bm{u}}\|_0
    +\|\bm{P}_h(\hat{\bm{u}}-\bm{u}_h^*)\|_0
    +\|(\bm{\delta}-\bm{P}_h)(\hat{\bm{u}}-\bm{u}_h^*)\|_0.
\end{align}
Applying \eqref{serr15} in first part of the right hand side of \eqref{eq:thm4.1:rhs} leads to
\begin{align}\label{perrthm1}
    \|\bm{u}-\hat{\bm{u}}\|_0\lesssim h^{m+1}\|\bm{u}\|_{m+1}\quad\mbox{for all}\quad \bm{u}\in\bm{H}^{m+1}(\Omega).
\end{align}
Using $\bm{P}_{K}\bm{u}^*_h=\bm{u}_h$ on each $K\in\mathcal{T}_h$ shows 
\begin{align*}
    \|\bm{P}_{K}(\hat{\bm{u}}-\bm{u}_h^*)\|_{0,K}=\|\bm{P}_{K}\hat{\bm{u}}-\bm{u}_h\|_{0,K}.
\end{align*}  
Here $\bm{V}_{h}\subset \bm{W}^*_h$, so we can apply the result of Theorem \ref{errthm11}
to deduce
\begin{align}\label{perrthm2}
    \|\bm{P}_{h}(\hat{\bm{u}}-\bm{u}_h^*)\|_0 = \|\bm{P}_{h}\bm{u}-\bm{u}_h\|_0\lesssim h^{k+3}(h^{k-1}+\|\bm{\sigma}\|_{k+2}+\|\bmdiv\bm{\sigma}\|_{k+1}).
\end{align} 
The estimates for the last term in \eqref{eq:thm4.1:rhs}
are identical to the estimates for the post-processing for
the source problem, hence we omit the details and quote the final estimate
from the proof of \cite[Theorem $4.1$]{CCJGEJP}
\begin{align}\label{perrthm3}
    \|(\bm{\delta}-\bm{P}_h)(\hat{\bm{u}}-\bm{u}_h^*)\|_0
    \lesssim h^{k+3}(\|\bm{\sigma}\|_{k+2}+\|\bmdiv\bm{\sigma}\|_{k+1})+h^{m+1}\|\bm{u}
\|_{m+1}.
\end{align}
Combining \eqref{eq:thm4.1:rhs} with \eqref{perrthm1}--\eqref{perrthm3} proves the first estimate.
\par
To prove the second estimate, we use the triangle inequality 
\begin{align*}
    |\bm{u}-\bm{u}_h^*|_{1,K}\leq |\bm{u}-\hat{\bm{u}}|_{1,K}+|\bm{P}_h(\hat{\bm{u}}-\bm{u}_h^*)|_{1,K}
    +|(\bm{\delta}-\bm{P}_h)(\hat{\bm{u}}-\bm{u}_h^*)|_{1,K}.
\end{align*}
Applying an discrete inverse inequality leads to
\begin{align*}
    |\bm{u}-\bm{u}_h^*|_{1,K}\leq |\bm{u}-\hat{\bm{u}}|_{1,K}
    +h^{-1}\|\bm{P}_h(\hat{\bm{u}}-\bm{u}_h^*)\|_{0,K}
    +h^{-1}\|(\bm{\delta}-\bm{P}_h)(\hat{\bm{u}}-\bm{u}_h^*)\|_{0,K}.
\end{align*}
Combining \eqref{perrthm2} and \eqref{perrthm3}, with the estimate
\begin{align*}
|\bm{u}-\hat{\bm{u}}|_{1,K} \lesssim h^m \|\bm{u}\|_{m+1,K}
\end{align*}
obtained from interpolation type arguments and inverse estimates as in the proof of \cite[Theorem $4.1$]{CCJGEJP},
proves the second estimate.  
\end{proof}
\par
\begin{definition}
For $(\bm{u}^*_h,\bm{u}^*_h)\neq 0$, we define the post-processed eigenvalue as the value of the 
Rayleigh quotient of the post-processed eigenfunction 
\begin{align}\label{postrayle11}
    \lambda_h^*:=-\frac{(\bmdiv \bm{\sigma}_h, \bm{u}^*_h)}{(\bm{u}^*_h,\bm{u}^*_h)}.
\end{align}
\end{definition}
\begin{theorem}
Let  $(\bm{\sigma},\bm{u},\lambda)\in (\bm{\Phi}\cap\bm{H}^{k+2}(\Omega;\mathbb{S}))\times\bm{H}^{k+2}(\Omega)\times \mathbb{R}_+$ 
be the solution of \eqref{prob12}, with $\|\bm{u}\|_0=1$.
For sufficiently small $h$,
the following a priori estimate 
for the post-processed eigenvalue  $\lambda_h^*$ holds
\begin{align*}
    |\lambda-\lambda_h^*|\lesssim h^{2k+4}.
\end{align*}
\end{theorem}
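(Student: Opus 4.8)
The goal is to show the post-processed eigenvalue converges at the doubled rate $h^{2k+4}$, matching the $h^{k+2}$ rate of the post-processed eigenfunction in the energy-type norm. The natural strategy is to start from the Rayleigh-quotient definition \eqref{postrayle11} and compare it with the exact Rayleigh quotient $\lambda = -(\bmdiv\bm{\sigma},\bm{u})/(\bm{u},\bm{u})$, which holds since $\bmdiv\bm{\sigma}=-\lambda\bm{u}$ and $\|\bm{u}\|_0=1$. The plan is to write the difference $\lambda - \lambda_h^*$ as a single fraction and exploit the fact that the Rayleigh quotient of a symmetric variational eigenvalue problem is stationary at the exact eigenpair, so the leading-order eigenfunction error should contribute only \emph{quadratically} to the eigenvalue error.

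Concretely, I would first derive a variational error identity. Using $\mathcal{A}\bm{\sigma}=\bm{\epsilon}(\bm{u})$ and $\bmdiv\bm{\sigma}=-\lambda\bm{u}$ together with the first equation of \eqref{prob12}, one can express $-(\bmdiv\bm{\sigma}_h,\bm{u}_h^*)$ in a form that isolates the symmetric bilinear form $2\nu(\mathcal{A}\bm{\sigma}_h,\mathcal{A}\bm{\sigma}_h)$ or the energy $(\bm{\epsilon}(\bm{u}_h^*),\bm{\epsilon}(\bm{u}_h^*))$. The key algebraic step is the standard Rayleigh-quotient identity: for a self-adjoint problem one has
\begin{align*}
    \lambda_h^* - \lambda
    = \frac{a(\bm{u}_h^*-\bm{u},\bm{u}_h^*-\bm{u}) - \lambda\,\|\bm{u}_h^*-\bm{u}\|_0^2}{\|\bm{u}_h^*\|_0^2},
\end{align*}
where $a(\cdot,\cdot)$ denotes the relevant (piecewise) energy form $(\bm{\epsilon}_{\mathcal{T}_h}\cdot,\bm{\epsilon}_{\mathcal{T}_h}\cdot)$ rescaled by $2\nu$. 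I would establish this identity by expanding the numerator, carefully accounting for the piecewise (broken) nature of $\bm{u}_h^*$ and for the fact that $\bmdiv\bm{\sigma}_h=-\lambda_h\bm{u}_h$ is only the \emph{discrete} momentum balance, so cross terms involving $\lambda-\lambda_h$ and the $\bm{L}^2$-orthogonality $\bm{P}_K\bm{u}_h^*=\bm{u}_h$ from \eqref{postpro11} must be tracked.

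Once this identity is in hand, the estimate is immediate from the energy-norm bound of Theorem~\ref{posterror11}: the numerator is controlled by $\|\nabla_{\mathcal{T}_h}(\bm{u}-\bm{u}_h^*)\|_0^2 \lesssim h^{2(k+2)}$ (taking $m=k+2$ so that the $h^m\|\bm{u}\|_{m+1}$ term also scales like $h^{k+2}$), plus the genuinely lower-order $\|\bm{u}-\bm{u}_h^*\|_0^2$ contribution which is even smaller. The denominator $\|\bm{u}_h^*\|_0^2$ tends to $1$ for small $h$ and is bounded below. I would conclude $|\lambda-\lambda_h^*|\lesssim h^{2k+4}$.

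The main obstacle is the derivation of the quadratic Rayleigh-quotient identity in this \emph{broken}, mixed setting rather than in a conforming primal one. Two complications must be handled with care: first, $\bm{u}_h^*$ is not $\bm{H}^1$-conforming and $\bm{\sigma}_h$ enters through its divergence, so the symmetric bilinear form must be reconstructed from the mixed equations \eqref{mfpro11} and the post-processing equations \eqref{postiden11} rather than being given outright; second, the discrete eigenvalue problem lacks the Galerkin orthogonality of the source problem (as emphasized before Lemma~\ref{mposterr12311}), so one cannot simply drop the first-order eigenfunction error — the cancellation that produces the squared term relies on combining the stationarity of the Rayleigh quotient with the superconvergent bound $|(\bm{u}-\bm{u}_h^*,\bm{u})|$-type terms, which in turn trace back to Theorem~\ref{errthm11} and the $h^{2k+2}$ estimates in \eqref{eq:J1}--\eqref{eq:J2}. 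Verifying that these lower-order-looking contributions actually enter only squared, and hence do not spoil the $h^{2k+4}$ rate, is where the real work lies.
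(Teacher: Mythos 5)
Your plan hinges on the identity
\begin{align*}
    \lambda_h^* - \lambda
    = \frac{a_{\mathcal{T}_h}(\bm{u}_h^*-\bm{u},\bm{u}_h^*-\bm{u}) - \lambda\,\|\bm{u}_h^*-\bm{u}\|_0^2}{\|\bm{u}_h^*\|_0^2},
    \qquad a_{\mathcal{T}_h}(\cdot,\cdot)=2\nu(\bm{\epsilon}_{\mathcal{T}_h}\cdot,\bm{\epsilon}_{\mathcal{T}_h}\cdot),
\end{align*}
and this is where the gap lies: the identity is not valid for the $\lambda_h^*$ of \eqref{postrayle11}. Taking $\bm{\tau}_h=\bm{\sigma}_h$ in \eqref{mfpro11} and using $\bm{P}_K\bm{u}_h^*=\bm{u}_h$ together with $\bmdiv\bm{\sigma}_h\in\bm{V}_h$ gives $-(\bmdiv\bm{\sigma}_h,\bm{u}_h^*)=2\nu\|\mathcal{A}\bm{\sigma}_h\|_0^2$, so the numerator of $\lambda_h^*$ is the \emph{stress} energy $2\nu\|\mathcal{A}\bm{\sigma}_h\|_0^2$ and not $a_{\mathcal{T}_h}(\bm{u}_h^*,\bm{u}_h^*)$; the post-processing \eqref{postpro11} only equates $(\bm{\epsilon}(\bm{u}_h^*),\bm{\epsilon}(\bm{v}))_K$ with $(\mathcal{A}\bm{\sigma}_h,\bm{\epsilon}(\bm{v}))_K$ on the complement $(\bm{\delta}-\bm{P}_K)\bm{W}_h^*|_K$, so the two energies differ at leading order. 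On top of that, $\bm{u}_h^*$ is nonconforming, so even for the correct quotient the cross term $a_{\mathcal{T}_h}(\bm{u},\bm{u}_h^*)$ cannot be replaced by $\lambda(\bm{u},\bm{u}_h^*)$ without producing edge-jump consistency terms that are not obviously of order $h^{2k+4}$. You flag both obstacles yourself and defer them as ``where the real work lies,'' but resolving them \emph{is} the proof; as written the argument does not close.

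The paper avoids the primal Rayleigh-quotient route entirely. It expands $2\nu\|\mathcal{A}(\bm{\sigma}-\bm{\sigma}_h)\|_0^2$ using $2\nu\|\mathcal{A}\bm{\sigma}\|_0^2=\lambda$ and $2\nu\|\mathcal{A}\bm{\sigma}_h\|_0^2=\lambda_h^*\|\bm{u}_h^*\|_0^2$, arriving at the identity \eqref{eq:p4.3}
\begin{align*}
    \lambda-\lambda_h^*
    =2\nu\|\mathcal{A}(\bm{\sigma}-\bm{\sigma}_h)\|_{0}^2-\lambda_h^*\|\bm{u}-\bm{u}_h^*\|^2_{0}
    -2(\bmdiv\bm{\sigma}_h+\lambda_h^*\bm{u}_h^*,\bm{u}-\bm{u}_h^*),
\end{align*}
and then rewrites the residual pairing via $\bmdiv\bm{\sigma}=-\lambda\bm{u}$ so that it becomes $2(\bmdiv(\bm{\sigma}-\bm{\sigma}_h),\bm{u}-\bm{u}_h^*)+2(\lambda-\lambda_h^*)(\bm{u},\bm{u}-\bm{u}_h^*)$ plus a squared term. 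The rate $h^{2k+4}$ then comes not from a single squared energy norm but from the \emph{product} of a low-order and a superconvergent factor, $\|\bmdiv(\bm{\sigma}-\bm{\sigma}_h)\|_0\|\bm{u}-\bm{u}_h^*\|_0\lesssim h^{k+1}\cdot h^{k+3}$, using \eqref{erreigdiv}, \eqref{erreigs12} and Theorem~\ref{posterror11}, with the $(\lambda-\lambda_h^*)^2$ term absorbed for small $h$. This mechanism — trading nonconformity and the lack of Galerkin orthogonality for a product of two independently estimated errors — is the idea your proposal is missing.
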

\begin{proof}
From  \eqref{prob12}, \eqref{mfpro11}, \eqref{postiden11}, \eqref{postrayle11}, and 
$(\bm{u},\bm{u})=1$, we have
\begin{align*}
    (2\nu\mathcal{A}\bm{\sigma},\mathcal{A}\bm{\sigma})=\lambda(\bm{u},\bm{u})=\lambda,
    \quad\textrm{and}\quad
    (2\nu\mathcal{A}\bm{\sigma}_h,\mathcal{A}\bm{\sigma}_h)=\lambda_h^*(\bm{u}_h^*,\bm{u}_h^*).
\end{align*}
Moreover, it follows
\begin{align*}
    2\nu\|\mathcal{A}(\bm{\sigma}-\bm{\sigma}_h)\|_{0}^2
    &=2\nu(\mathcal{A}(\bm{\sigma}-\bm{\sigma}_h),\mathcal{A}(\bm{\sigma}-\bm{\sigma}_h))\\
    &=2\nu(\mathcal{A}\bm{\sigma},\mathcal{A}\bm{\sigma})+2\nu(\mathcal{A}\bm{\sigma}_h,\mathcal{A}\bm{\sigma}_h)
    -4\nu(\mathcal{A}\bm{\sigma},\mathcal{A}\bm{\sigma}_h)\\
    &=\lambda+\lambda_h^*(\bm{u}_h^*,\bm{u}_h^*)-2\lambda_h^*(\bm{u},\bm{u}_h^*)+2(\bmdiv\bm{\sigma}_h,\bm{u})
    +2\lambda_h^*(\bm{u},\bm{u}_h^*).
\end{align*}
Therefore we have
\begin{align*}
    \lambda- \lambda_h^*
    =2\nu\|\mathcal{A}(\bm{\sigma}-\bm{\sigma}_h)\|_{0}^2-\lambda_h^*-\lambda_h^*(\bm{u}^*_h, \bm{u}^*_h)
    +2\lambda_h^*(\bm{u},\bm{u}_h^*)-2(\bmdiv\bm{\sigma}_h+\lambda_h^*\bm{u}_h^*,\bm{u}).
\end{align*}
Using 
$(\bm{u},\bm{u})+(\bm{u}^*_h,\bm{u}^*_h)-2(\bm{u},\bm{u}_h^*)=\|\bm{u}-\bm{u}_h^*\|^2_{0}$,
and $(\bmdiv\bm{\sigma}_h+\lambda_h^*\bm{u}_h^*,\bm{u}_h^*)=0$
we get
\begin{align}\label{eq:p4.3}
    \lambda- \lambda_h^*=
    &2\nu\|\mathcal{A}(\bm{\sigma}-\bm{\sigma}_h)\|_{0}^2-\lambda_h^*\|\bm{u}-\bm{u}_h^*\|^2_{0}
    -2(\bmdiv\bm{\sigma}_h+\lambda_h^*\bm{u}_h^*,\bm{u}-\bm{u}_h^*),
\end{align}
which leads to
\begin{align*}
    \lambda- \lambda_h^*=
    &2\nu\|\mathcal{A}(\bm{\sigma}-\bm{\sigma}_h)\|_{0}^2-\lambda_h^*\|\bm{u}-\bm{u}_h^*\|^2_{0}
    -2(\bmdiv\bm{\sigma}_h-\bmdiv\bm{\sigma},\bm{u}-\bm{u}_h^*)\\
    &-2(\lambda_h^*\bm{u}_h^*-\lambda\bm{u},\bm{u}-\bm{u}_h^*)\\
    =&2\nu\|\mathcal{A}(\bm{\sigma}-\bm{\sigma}_h)\|_{0}^2-\lambda_h^*\|\bm{u}-\bm{u}_h^*\|^2_{0}
    -2(\bmdiv\bm{\sigma}_h-\bmdiv\bm{\sigma},\bm{u}-\bm{u}_h^*)\\
    &-2(\lambda_h^*(\bm{u}_h^*-\bm{u}),\bm{u}-\bm{u}_h^*)-2((\lambda_h^*-\lambda)\bm{u},\bm{u}-\bm{u}_h^*)\\
    =&2\nu\|\mathcal{A}(\bm{\sigma}-\bm{\sigma}_h)\|_{0}^2+\lambda_h^*\|\bm{u}-\bm{u}_h^*\|^2_{0}
    +2(\bmdiv(\bm{\sigma}-\bm{\sigma}_h),\bm{u}-\bm{u}_h^*)\\
    &+2(\lambda-\lambda_h^*)(\bm{u},\bm{u}-\bm{u}_h^*).
\end{align*}
From Cauchy-Schwarz inequality it follows
\begin{align*}
    |\lambda-\lambda_h^*|&\lesssim 
    2\nu\|\mathcal{A}(\bm{\sigma}-\bm{\sigma}_h)\|^{2}_{0}+\lambda_h^*\|\bm{u}-\bm{u}_h^*\|^{2}_{0}
    +2\|\bmdiv(\bm{\sigma}-\bm{\sigma}_h)\|_0\|\bm{u}-\bm{u}_h^*\|_0\\
    &\quad+2|\lambda-\lambda_h^*| \|\bm{u}-\bm{u}_h^*\|_0.
\end{align*} 
This results in
\begin{align*}
    |\lambda-\lambda_h^*|\lesssim \|\bm{\sigma}-\bm{\sigma}_h\|^{2}_{0}
    +\|\bm{u}-\bm{u}_h^*\|^{2}_{0}+\|\bmdiv(\bm{\sigma}-\bm{\sigma}_h)\|_0\|\bm{u}-\bm{u}_h^*\|_0
    +|\lambda-\lambda_h^*|^2.
\end{align*} 
Using  the estimates \eqref{erreigs12} and \eqref{erreigdiv}, and 
Theorem~\ref{posterror11}, we obtain for $h$ small enough
\begin{align*}
    |\lambda-\lambda_h^*|\lesssim h^{2k+4}.
\end{align*}
\end{proof}
%
\section{A posteriori error analysis}\label{postera}
First, we define $\tilde{\bm{u}}_h\in \bm{H}=\bm{H}^{1}_0(\Omega)$. 
Here $\tilde{\bm{u}}_h$
is closely related to the discontinuous approximation $\bm{u}_h^*$. 
\par
The primal mixed formulation of the Stokes source 
problem with right hand side $\lambda\bm{u}$ reads as follows: 
find $\bm{\sigma}\in\bm{L}$ and  $\bm{u}\in \bm{H}$ such that
\begin{align}\label{posprob11}
\begin{split}
    -(\bm{\sigma}, \bm{\epsilon}(\bm{v}))& =-\lambda (\bm{u},\bm{v})\quad\mbox{for all}\;\bm{v}\in{\bm{H}},\\
    (2\nu\mathcal{A}\bm{\sigma},\mathcal{A}\bm{\tau})-(\bm{\tau},\bm{\epsilon}(\bm{u}))&=0
    \qquad\qquad\;\,\mbox{for all}\;\bm{\tau}\in{\bm{L}}.
\end{split}
\end{align}
Following \cite[Section 1.3]{CC}, we have the following lemma
\begin{lemma}
The operator $A:X\rightarrow X^*$, defined for $(\bm{\sigma},\bm{u})\in X:= \bm{L}\times \bm{H }$ by
\begin{align*}
    (A(\bm{\sigma},\bm{u}))(\bm{\tau},\bm{v}):= 
    (2\nu\mathcal{A}\bm{\sigma},\mathcal{A}\bm{\tau})-(\bm{\sigma}, \bm{\epsilon}(\bm{v}))-(\bm{\tau},\bm{\epsilon}(\bm{u})),
\end{align*}
is linear, bounded, and bijective.
\end{lemma}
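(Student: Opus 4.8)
The statement to prove is that the operator $A:X\rightarrow X^*$ defined by the bilinear form
\[
(A(\bm{\sigma},\bm{u}))(\bm{\tau},\bm{v}):= (2\nu\mathcal{A}\bm{\sigma},\mathcal{A}\bm{\tau})-(\bm{\sigma}, \bm{\epsilon}(\bm{v}))-(\bm{\tau},\bm{\epsilon}(\bm{u}))
\]
on $X:=\bm{L}\times\bm{H}$ is linear, bounded, and bijective. This is the well-posedness of the primal mixed formulation \eqref{posprob11}, and the natural route is the theory of saddle-point (mixed) problems via the Babuška--Brezzi conditions.

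Let me think about the spaces. The citation is to [CC, Section 1.3], which I'll take to fix $\bm{L}$ as the space of (symmetric, $L^2$) stress tensors and $\bm{H}=\bm{H}^1_0(\Omega)$ for the velocity. Linearity is immediate from the definition. For boundedness, I would invoke Cauchy–Schwarz on each of the three terms together with the property $\|\mathcal{A}\bm{\tau}\|\le\tfrac12\|\bm{\tau}\|$ established in the preliminaries, so that $|(A(\bm{\sigma},\bm{u}))(\bm{\tau},\bm{v})|\lesssim(\|\bm{\sigma}\|_{\bm{L}}+\|\bm{u}\|_{\bm{H}})(\|\bm{\tau}\|_{\bm{L}}+\|\bm{v}\|_{\bm{H}})$, giving continuity of $A$ as a map into $X^*$.

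For bijectivity, I would recognize the form as a symmetric saddle-point structure with the symmetric $\bm{L}$-block $a(\bm{\sigma},\bm{\tau}):=(2\nu\mathcal{A}\bm{\sigma},\mathcal{A}\bm{\tau})$ and the coupling $b(\bm{\tau},\bm{v}):=-(\bm{\tau},\bm{\epsilon}(\bm{v}))$, and apply the Brezzi theory. This requires two conditions. First, coercivity of $a$ on the kernel $Z:=\{\bm{\tau}\in\bm{L}\mid b(\bm{\tau},\bm{v})=0\ \forall\bm{v}\in\bm{H}\}$: here $a$ controls only the deviatoric part $\mathcal{A}\bm{\tau}$, so coercivity on $Z$ hinges on showing that the trace part is controlled on the kernel — this is exactly where the incompressibility/kernel characterization enters and is where the deviatoric operator's null space $\mathrm{Ker}(\mathcal{A})=\{q\bm{\delta}\}$ must be reconciled with the constraint, using the condition $\int_\Omega\mathrm{tr}\,\bm{\tau}\,dx=0$ built into $\bm{L}$. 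Second, the inf–sup (LBB) condition on $b$: for every $\bm{v}\in\bm{H}$ one must produce a $\bm{\tau}\in\bm{L}$ with $(\bm{\tau},\bm{\epsilon}(\bm{v}))\gtrsim\|\bm{v}\|_1\|\bm{\tau}\|$, which follows from choosing $\bm{\tau}=\bm{\epsilon}(\bm{v})$ (or a symmetric tensor aligned with $\bm{\epsilon}(\bm{v})$) together with Korn's inequality on $\bm{H}^1_0(\Omega)$ to recover the full $\bm{H}^1$ norm of $\bm{v}$. Given these two conditions and the boundedness already shown, the Brezzi theorem yields that $A$ is an isomorphism onto $X^*$, hence bijective.

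The main obstacle I expect is the kernel coercivity of $a$. Since $a$ only sees the deviatoric (trace-free) part via $\mathcal{A}$, one must verify that elements of the constraint kernel $Z$ cannot hide energy in their trace part; concretely, I would show that $\bm{\tau}\in Z$ forces enough control on $\mathrm{tr}\,\bm{\tau}$ (through the orthogonality to all $\bm{\epsilon}(\bm{v})$ and the quotient/mean-zero normalization) that $a(\bm{\tau},\bm{\tau})=2\nu\|\mathcal{A}\bm{\tau}\|_0^2\gtrsim\|\bm{\tau}\|_{\bm{L}}^2$. Alternatively, rather than reproving these conditions from scratch, the cleanest path — and the one the citation to \cite[Section 1.3]{CC} suggests — is to map problem \eqref{posprob11} to the standard velocity–pressure Stokes saddle-point system by identifying $p=-\tfrac12\mathrm{tr}\,\bm{\sigma}$, whose well-posedness (Korn plus the classical Stokes inf–sup for $\ddiv:\bm{H}^1_0\to L^2_0$) is standard, and then transfer bijectivity back to $A$ through the equivalence of the two formulations. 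I would present the argument in that reduced form, invoking the abstract Brezzi result and the classical Stokes inf–sup, and thereby conclude that $A$ is linear, bounded, and bijective.
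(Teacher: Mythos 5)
The paper does not actually prove this lemma: it is stated as a direct consequence of \cite[Section~1.3]{CC}, so your blind reconstruction is necessarily a ``different route'' in the sense that you supply the argument the paper only cites. Your route --- linearity and boundedness by Cauchy--Schwarz together with $\|\mathcal{A}\bm{\tau}\|\le\tfrac12\|\bm{\tau}\|$, then bijectivity via the Brezzi conditions for the blocks $a(\bm{\sigma},\bm{\tau})=(2\nu\mathcal{A}\bm{\sigma},\mathcal{A}\bm{\tau})$ and $b(\bm{\tau},\bm{v})=-(\bm{\tau},\bm{\epsilon}(\bm{v}))$ --- is exactly the standard argument underlying that citation, and both of your verifications are sound: the inf--sup condition follows from the choice $\bm{\tau}=\bm{\epsilon}(\bm{v})$ (admissible in $\bm{L}$ because $\operatorname{tr}\bm{\epsilon}(\bm{v})=\ddiv\bm{v}$ has zero mean for $\bm{v}\in\bm{H}^1_0(\Omega)$) plus Korn's first inequality. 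The one point you flag but do not close is the kernel coercivity, and it is worth noting that it does not need to be reproved from scratch: the kernel $Z$ of $b$ consists of symmetric, distributionally divergence-free tensors with mean-zero trace, and for such $\bm{\tau}$ the estimate $\|\bm{\tau}\|_0\lesssim\|\mathcal{A}\bm{\tau}\|_0+\|\bmdiv\bm{\tau}\|_{-1}=\|\mathcal{A}\bm{\tau}\|_0$ is precisely the trace estimate of \cite[Proposition~3.1, IV.3]{BF} that this paper itself invokes later in the a~posteriori analysis; citing it here makes your argument complete. Your alternative suggestion of transferring well-posedness from the classical velocity--pressure Stokes system via $p=-\tfrac12\operatorname{tr}\bm{\sigma}$ would also work and is closer in spirit to what \cite{CC} does. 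What your explicit Brezzi argument buys over the paper's bare citation is a self-contained verification that the specific spaces used here (in particular the mean-zero-trace normalization of $\bm{L}$) satisfy the required conditions, which is exactly where a careless choice of spaces would break the lemma.
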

From the above lemma, we obtain 
\begin{align}\label{eq:resequiv}
    \|\mathcal{A}(\bm{\sigma}-\bm{\sigma}_h)\|_0+\|\bm{\epsilon}(\bm{u}-\tilde{\bm{u}}_h)\|_0 
    \approx\|Res_L\|_{\bm{L}^*} + \|Res_H\|_{\bm{H}^*},
\end{align}
for any approximation $(\bm{\sigma}_h,\tilde{\bm{u}}_h)\in \bm{L}\times \bm{H }$
of the primal source problem with right hand side $\lambda \bm{u}$,
where 
\begin{align*}
    Res_H(\bm{v})&:= {\lambda}({\bm{u}},\bm{v})+(\bm{\sigma}_h,\bm{\epsilon}({\bm{v}}))\qquad\quad\;\mbox{for all}\;\bm{v}\in{\bm{H}},\\
    Res_L(\bm{\tau})&:= (2\nu\mathcal{A}\bm{\sigma}_h,\mathcal{A}\bm{\tau})-(\bm{\tau},\bm{\epsilon}(\tilde{\bm{u}}_h))
    \quad\mbox{for all}\;\bm{\tau}\in{\bm{L}}.
\end{align*}
\begin{lemma}\label{posprob12}
Let $(\bm{\sigma},\bm{u},\lambda)\in \bm{L}\times \bm{H}\times \mathbb{R}_+$ be a solution of \eqref{prob12}. 
Then the approximation 
$(\bm{\sigma}_h,\tilde{\bm{u}}_h)\in \bm{L}\times \bm{H}$ 
satisfies 
\begin{align*}
    &\|\mathcal{A}(\bm{\sigma}-\bm{\sigma}_h)\|_0+\|\bm{\epsilon}_{\mathcal{T}_h}(\bm{u}-\tilde{\bm{u}}_h)\|_0\\
    &\qquad\lesssim 
    \|\mathcal{A}{\bm{\sigma}}_h-\bm{\epsilon}(\tilde{\bm{u}}_h)\|_{K,0}
    +\Big(\sum_{K\in\mathcal{T}_h}h_{K}^2\|\lambda \bm{u}+\bmdiv\bm{\sigma}_h\|^2_0\Big)^{1/2}
    +\bm{\Theta},
\end{align*}
with the higher order term
\begin{align*}
    \bm{\Theta} &:= \lambda_h \| \bm{u} - \bm{u}_h^*\|_0 + |\lambda-\lambda_h|.
\end{align*}
\end{lemma}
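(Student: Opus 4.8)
The plan is to start from the abstract equivalence \eqref{eq:resequiv}, which bounds the left-hand side by the dual norms of the two residuals $Res_L$ and $Res_H$ for the primal source problem with right-hand side $\lambda\bm{u}$. Since $(\bm{\sigma}_h,\tilde{\bm{u}}_h)$ is the chosen approximation, I would estimate each residual norm separately. For $Res_L(\bm{\tau}) = (2\nu\mathcal{A}\bm{\sigma}_h,\mathcal{A}\bm{\tau})-(\bm{\tau},\bm{\epsilon}(\tilde{\bm{u}}_h))$, I would use the identity $(2\nu\mathcal{A}\bm{\sigma}_h,\mathcal{A}\bm{\tau})=(\mathcal{A}\bm{\sigma}_h,\bm{\tau})$ coming from the listed algebraic properties of $\mathcal{A}$, so that $Res_L(\bm{\tau})=(\mathcal{A}\bm{\sigma}_h-\bm{\epsilon}(\tilde{\bm{u}}_h),\bm{\tau})$. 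A Cauchy--Schwarz estimate over each element then yields $\|Res_L\|_{\bm{L}^*}\lesssim \|\mathcal{A}\bm{\sigma}_h-\bm{\epsilon}(\tilde{\bm{u}}_h)\|_{K,0}$, which is exactly the first term on the right-hand side of the claim.

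For the second residual, I would first rewrite $Res_H(\bm{v}) = \lambda(\bm{u},\bm{v})+(\bm{\sigma}_h,\bm{\epsilon}(\bm{v}))$. The natural move is an elementwise integration by parts of $(\bm{\sigma}_h,\bm{\epsilon}(\bm{v}))$: since $\bm{\sigma}_h$ is symmetric, $(\bm{\sigma}_h,\bm{\epsilon}(\bm{v}))_K=(\bm{\sigma}_h,\nabla\bm{v})_K=-(\bmdiv\bm{\sigma}_h,\bm{v})_K+\text{boundary terms}$. Because $\bm{\sigma}_h\in\bm{\Phi}_h\subset\bm{H}(\bmdiv,\Omega;\mathbb{S})$ has continuous normal components across interior edges and $\bm{v}\in\bm{H}^1_0(\Omega)$ is single-valued with zero boundary trace, the edge jump contributions cancel, so no jump term survives. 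This leaves $Res_H(\bm{v})=(\lambda\bm{u}+\bmdiv\bm{\sigma}_h,\bm{v})$. A standard elementwise Cauchy--Schwarz together with a Poincar\'e/scaling inequality on each $K$, exploiting that $\bm{v}\in\bm{H}^1_0$, should produce the weighted residual term $\big(\sum_{K}h_K^2\|\lambda\bm{u}+\bmdiv\bm{\sigma}_h\|_0^2\big)^{1/2}$ bounded by $\|\bm{\epsilon}(\bm{v})\|_0$, giving this contribution to $\|Res_H\|_{\bm{H}^*}$.

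The remaining, and I expect main, obstacle is the term $\lambda\bm{u}$ in $Res_H$: the true right-hand side driving $\tilde{\bm{u}}_h$ is not $\lambda\bm{u}$ but an approximation involving $\lambda_h$ and $\bm{u}_h^*$, so the residual is only controlled up to a consistency mismatch. The plan is to insert and subtract the discrete data by writing $\lambda\bm{u}=\lambda_h\bm{u}_h^*+(\lambda\bm{u}-\lambda_h\bm{u}_h^*)$ and then splitting $\lambda\bm{u}-\lambda_h\bm{u}_h^*=\lambda_h(\bm{u}-\bm{u}_h^*)+(\lambda-\lambda_h)\bm{u}$. The part $\lambda_h\bm{u}_h^*$ is absorbed into the computable volume residual (its pairing with $\bm{v}$ recombines with $\bmdiv\bm{\sigma}_h$ into the estimator term), while the two correction pieces are bounded by $\lambda_h\|\bm{u}-\bm{u}_h^*\|_0$ and $|\lambda-\lambda_h|\,\|\bm{u}\|_0$ after Cauchy--Schwarz, using $\|\bm{u}\|_0=1$. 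These are precisely the two summands in the higher order term $\bm{\Theta}=\lambda_h\|\bm{u}-\bm{u}_h^*\|_0+|\lambda-\lambda_h|$. Collecting the three bounds and feeding them back through \eqref{eq:resequiv} yields the stated estimate; I would close by remarking that by \eqref{eigenlam12} and Theorem~\ref{posterror11} the term $\bm{\Theta}$ is indeed of higher order, which justifies its name.
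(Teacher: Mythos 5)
Your overall skeleton---invoking \eqref{eq:resequiv} and estimating $\|Res_L\|_{\bm{L}^*}$ and $\|Res_H\|_{\bm{H}^*}$ separately---matches the paper, and your treatment of $Res_L$ and of the elementwise integration by parts giving $Res_H(\bm{v})=(\lambda\bm{u}+\bmdiv\bm{\sigma}_h,\bm{v})$ (jumps cancel by normal continuity of $\bm{\sigma}_h$ and $\bm{v}\in\bm{H}^1_0(\Omega)$) is correct. The gap is in how you extract the $h_K$-weighted volume term. There is no elementwise Poincar\'e inequality $h_K^{-1}\|\bm{v}\|_{0,K}\lesssim\|\nabla\bm{v}\|_{0,K}$ for a general $\bm{v}\in\bm{H}^1_0(\Omega)$: take $\bm{v}$ equal to a nonzero constant on an interior element and it fails. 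The weights must come from a quasi-interpolant. The paper inserts the Scott--Zhang interpolant $\bm{v}_h$ and splits $Res_H(\bm{v})=(\lambda\bm{u}+\bmdiv\bm{\sigma}_h,\bm{v}-\bm{v}_h)+(\lambda\bm{u}+\bmdiv\bm{\sigma}_h,\bm{v}_h)$; the first summand yields the weighted term via $\sum_K h_K^{-2}\|\bm{v}-\bm{v}_h\|_{0,K}^2\lesssim|\bm{v}|_1^2$, and the second summand is where $\bm{\Theta}$ actually originates: by \eqref{mfpro11} one has $(\bmdiv\bm{\sigma}_h,\bm{v}_h)=-\lambda_h(\bm{u}_h,\bm{v}_h)$, and the constraint $\bm{P}_K\bm{u}_h^*=\bm{u}_h$ from \eqref{postpro11} permits replacing $\bm{u}_h$ by $\bm{u}_h^*$ precisely because $\bm{v}_h$ is a piecewise polynomial of degree at most $k$, giving $(\lambda-\lambda_h)(\bm{u},\bm{v}_h)+\lambda_h(\bm{u}-\bm{u}_h^*,\bm{v}_h)\lesssim\bm{\Theta}\,|\bm{v}|_1$.

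Your alternative route to $\bm{\Theta}$, splitting the datum as $\lambda\bm{u}=\lambda_h\bm{u}_h^*+\lambda_h(\bm{u}-\bm{u}_h^*)+(\lambda-\lambda_h)\bm{u}$ inside the pairing with the full test function $\bm{v}$, does not close. First, the leftover term $(\lambda_h\bm{u}_h^*+\bmdiv\bm{\sigma}_h,\bm{v})$ is not the volume residual stated in the lemma, which carries the exact datum $\lambda\bm{u}$ (the passage to computable data occurs only in Theorem~\ref{mpost11n11}, at the cost of $\bm{\Upsilon}$). Second, since $\bmdiv\bm{\sigma}_h\in\bm{V}_h$ and hence $\bmdiv\bm{\sigma}_h=-\lambda_h\bm{u}_h$ identically, that leftover equals $\lambda_h(\bm{u}_h^*-\bm{u}_h,\bm{v})$, and any direct bound of it reintroduces $\lambda_h\|\bm{u}-\bm{u}_h\|_0$, which is of the same order as the error being controlled, not higher order. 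The substitution $\bm{u}_h\mapsto\bm{u}_h^*$ is only free when tested against discrete functions, which is exactly why the interpolation operator is indispensable to this argument.
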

\begin{proof}
Gauss theorem implies for any $\bm{v}\in\bm{H}$
\begin{align*}
    Res_H(\bm{v})&= ({\lambda} {\bm{u}}+\bmdiv\bm{\sigma}_h,\bm{v}).
\end{align*}
Let $\bm{v}_h$ denote the Scott-Zhang interpolation \cite{SZ} of $\bm{v}$, then it holds that
\begin{align*}
    Res_H(\bm{v})=(\lambda\bm{u}+\bmdiv\bm{\sigma}_h,\bm{v}-\bm{v}_h) 
    +(\lambda\bm{u}+\bmdiv\bm{\sigma}_h,\bm{v}_h).
\end{align*}
For the second term on the right hand side, \eqref{mfpro11} and \eqref{postpro11} show
\begin{align*}
(\lambda\bm{u}+\bmdiv\bm{\sigma}_h,\bm{v}_h)
&= (\lambda\bm{u}-\lambda_h\bm{u}_h,\bm{v}_h)
= (\lambda\bm{u}-\lambda_h\bm{u}^*_h,\bm{v}_h)\\
&= (\lambda - \lambda_h)(\bm{u},\bm{v}_h) + \lambda_h (\bm{u}-\bm{u}_h^*,\bm{v}_h).
\end{align*}
Applying Cauchy-Schwarz inequality, it follows
\begin{align*}
    Res_H(\bm{v})&\lesssim 
    \left(\sum_{K\in\mathcal{T}_h}h_{K}^2\|\lambda\bm{u}+\bmdiv\bm{\sigma}_h\|^2_{K,0}\right)^{1/2}
    \left(\sum_{K\in\mathcal{T}_h}h_{K}^{-2}\|(\bm{v}-\bm{v}_h) \|^2_{K,0}\right)^{1/2}\\
    &\quad+\left( \lambda_h \| \bm{u} - \bm{u}_h^*\| + |\lambda-\lambda_h| \right)
    \|\bm{v}_h \|_0.
\end{align*}
Poincare's inequality, and stability and approximation properties of the Scott-Zhang interpolation
show
\begin{align*}
    Res_H(\bm{v})\lesssim \left(\left(\sum_{K\in\mathcal{T}_h}h_{K}^2\|\lambda\bm{u}
    +\bmdiv\bm{\sigma}_h\|^2_{K,0}\right)^{1/2}
    +\bm{\Theta}\right)|\bm{v}|_1.
\end{align*}
Finally, $\|Res_L\|_{\bm{L}^*}$ is estimated as
\begin{align*}
    Res_L(\bm{\tau})=
    \int_{\Omega}(\mathcal{A} \bm{\sigma}_h-\bm{\epsilon}(\tilde{\bm{u}}_h)):
    \bm{\tau} dx\le \|\mathcal{A} \bm{\sigma}_h-\bm{\epsilon}(\tilde{\bm{u}}_h)\|_0
    \|\bm{\tau}\|_0.
\end{align*}
\end{proof}
Now, we present an a posteriori error estimator that
involves the discontinuous post-processed approximation 
\[ 
\bm{u}_h^*\in H^{1}(\mathcal{T}_h) :=\{\bm{v}\in \bm{L}^2(\Omega)| \bm{v}|_T\in \bm{H}^1(T)\;\mbox{for all} \;T\in\mathcal{T}_h\}.
\]
In the following let $\tilde{\bm{u}}_h \in \bm{H}$ be the conforming approximation
to $\bm{u}$,
that is obtained from the  discontinuous post-processed function $\bm{u}_h^*$
by taking the arithmetic mean value
\begin{align*}
\tilde{\bm{u}}_h(z):=\frac{1}{|\{K\in \mathcal{T}_h : z\in K\}|}\sum_{K\in \mathcal{T}_h : z\in K} \bm{u}_h^*(z)|_K
\end{align*}
for each vertex and edge degree of freedom in $z\in \mathbb{R}^2$.
A discrete scaling argument and \cite[Theorem 2.2]{KP} show
\begin{align}\label{karakashian}
\sum_{K\in\mathcal{T}_h} h_{K}^{-2} \| \bm{u}_h^* - \tilde{\bm{u}}_h\|_{K,0}^2
+ \sum_{K\in\mathcal{T}_h} \| \bm{\epsilon}_{\mathcal{T}_h}(\bm{u}_h^*) - \bm{\epsilon}(\tilde{\bm{u}}_h)\|_{K,0}^2
\lesssim
\sum_{E\in \mathcal{E}_h}h_{E}^{-1}\|[\bm{u}_h^*]\|_{E,0}^2.
\end{align}
\begin{theorem}\label{mpost11n11}
Let $(\bm{\sigma},\bm{u},\lambda)\in \bm{L}\times\bm{H}\times\mathbb{R}_+$ be a solution of \eqref{prob12}. 
The post-processed eigenfunction $\bm{u}_{h}^{*}\in H^{1}(\mathcal{T}_h)$ satisfies the following
reliability estimate
\begin{align}\label{mpost121}
    \|\mathcal{A}(\bm{\sigma}-\bm{\sigma}_h)\|_0+\|\bm{\epsilon}_{\mathcal{T}_h}(\bm{u}-\bm{u}_h^*)\|_0
    &\lesssim \eta+\bm{\Upsilon},
\end{align}
for the a~posteriori error estimator
\begin{align*}
    \eta^2 :=  \|\mathcal{A}\bm{\sigma}_h-\bm{\epsilon}_{\mathcal{T}_h} (\bm{u}_h^*)\|^2_0
    +\sum_{K\in\mathcal{T}_h}h_{K}^2\|\lambda^*_h \bm{u}^*_h+\bmdiv\bm{\sigma}_h\|^2_{K,0}
    +\sum_{E\in \mathcal{E}_h}h_{E}^{-1}\|[\bm{u}_h^*]\|^2_{E,0},
\end{align*}
and the higher order term
\begin{align*}    
    \bm{\Upsilon} := 
    \left(\sum_{T\in\mathcal{T}_h}h_{K}^2\|\lambda\bm{u} - \lambda_h^{*}\bm{u}_h^*\|^2_{K,0}\right)^{1/2}
    +\bm{\Theta}.
\end{align*} 
\end{theorem}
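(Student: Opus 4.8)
The plan is to derive the reliability estimate \eqref{mpost121} by reducing the left-hand side, which involves the \emph{discontinuous} post-processed function $\bm{u}_h^*$, to the conforming approximation $\tilde{\bm{u}}_h\in\bm{H}$ for which the abstract estimate \eqref{eq:resequiv} and Lemma~\ref{posprob12} already apply. First I would invoke the triangle inequality elementwise to split
\begin{align*}
    \|\bm{\epsilon}_{\mathcal{T}_h}(\bm{u}-\bm{u}_h^*)\|_0
    \leq \|\bm{\epsilon}_{\mathcal{T}_h}(\bm{u}-\tilde{\bm{u}}_h)\|_0
    + \|\bm{\epsilon}_{\mathcal{T}_h}(\tilde{\bm{u}}_h-\bm{u}_h^*)\|_0,
\end{align*}
so that the conforming error $\|\mathcal{A}(\bm{\sigma}-\bm{\sigma}_h)\|_0+\|\bm{\epsilon}_{\mathcal{T}_h}(\bm{u}-\tilde{\bm{u}}_h)\|_0$ is controlled by Lemma~\ref{posprob12}, while the nonconformity term $\|\bm{\epsilon}_{\mathcal{T}_h}(\tilde{\bm{u}}_h-\bm{u}_h^*)\|_0$ is exactly the quantity bounded by the jump term in \eqref{karakashian}.

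Next I would reconcile the two volume residuals. Lemma~\ref{posprob12} produces the term $\big(\sum_K h_K^2\|\lambda\bm{u}+\bmdiv\bm{\sigma}_h\|_{K,0}^2\big)^{1/2}$ involving the \emph{exact} data $\lambda\bm{u}$, whereas the computable estimator $\eta$ uses $\lambda_h^*\bm{u}_h^*+\bmdiv\bm{\sigma}_h$. I would bridge these by adding and subtracting $\lambda_h^*\bm{u}_h^*$ inside the norm, so that
\begin{align*}
    \Big(\sum_{K\in\mathcal{T}_h}h_K^2\|\lambda\bm{u}+\bmdiv\bm{\sigma}_h\|_{K,0}^2\Big)^{1/2}
    \lesssim
    \Big(\sum_{K\in\mathcal{T}_h}h_K^2\|\lambda_h^*\bm{u}_h^*+\bmdiv\bm{\sigma}_h\|_{K,0}^2\Big)^{1/2}
    + \Big(\sum_{K\in\mathcal{T}_h}h_K^2\|\lambda\bm{u}-\lambda_h^*\bm{u}_h^*\|_{K,0}^2\Big)^{1/2},
\end{align*}
where the first summand is the second contribution to $\eta^2$ and the second is absorbed into the higher order term $\bm{\Upsilon}$. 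Similarly, the first-residual term $\|\mathcal{A}\bm{\sigma}_h-\bm{\epsilon}(\tilde{\bm{u}}_h)\|_0$ from Lemma~\ref{posprob12} must be converted to $\|\mathcal{A}\bm{\sigma}_h-\bm{\epsilon}_{\mathcal{T}_h}(\bm{u}_h^*)\|_0$ by adding and subtracting $\bm{\epsilon}_{\mathcal{T}_h}(\bm{u}_h^*)$ and again using the nonconformity bound \eqref{karakashian}.

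The main obstacle will be the careful bookkeeping of the higher order term $\bm{\Theta}$ already present in Lemma~\ref{posprob12}, together with the new mesh-weighted data-oscillation term, and verifying that all of them genuinely collect into $\bm{\Upsilon}=\big(\sum_K h_K^2\|\lambda\bm{u}-\lambda_h^*\bm{u}_h^*\|_{K,0}^2\big)^{1/2}+\bm{\Theta}$ without leaving any non-computable residual of the leading order. I expect each piece to be controllable: $\bm{\Theta}=\lambda_h\|\bm{u}-\bm{u}_h^*\|_0+|\lambda-\lambda_h|$ is higher order by the a~priori estimates \eqref{eigenlam12} and Theorem~\ref{posterror11}, and the jump term $\sum_E h_E^{-1}\|[\bm{u}_h^*]\|_{E,0}^2$ enters $\eta^2$ directly. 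Assembling the elementwise triangle inequalities, Lemma~\ref{posprob12}, the data-splitting above, and \eqref{karakashian}, and finally squaring and summing to match the definition of $\eta^2$, yields \eqref{mpost121}.
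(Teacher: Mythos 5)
Your proposal is correct and follows essentially the same route as the paper's proof: a triangle inequality to pass from $\bm{u}_h^*$ to the conforming $\tilde{\bm{u}}_h$, an application of Lemma~\ref{posprob12}, the add-and-subtract of $\lambda_h^*\bm{u}_h^*$ and of $\bm{\epsilon}_{\mathcal{T}_h}(\bm{u}_h^*)$ to make the residuals computable, and finally \eqref{karakashian} to absorb the nonconformity into the jump term. No gaps.
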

\begin{proof}
Adding and subtracting $\bm{\epsilon}(\tilde{\bm{u}}_h)$ in the second term of \eqref{mpost121} 
and  using the triangle inequality, we have
\begin{align*}
    \|\mathcal{A}(\bm{\sigma}-\bm{\sigma}_h)\|_0+\|\bm{\epsilon}_{\mathcal{T}_h}(\bm{u}-\bm{u}_h^*)\|_0
    &\lesssim \|\mathcal{A}(\bm{\sigma}-\bm{\sigma}_h)\|_0+\|\bm{\epsilon}_{\mathcal{T}_h}(\bm{u}-\tilde{\bm{u}}_h)\|_0\\
    &\quad+\|\bm{\epsilon}_{\mathcal{T}_h}(\bm{u}_h^*)-\bm{\epsilon}(\tilde{\bm{u}}_h)\|_0.
\end{align*}  
Applying Theorem \ref{posprob12}, implies
\begin{align*}
    \|\mathcal{A}(\bm{\sigma}-\bm{\sigma}_h)\|_0+\|\bm{\epsilon}_{\mathcal{T}_h}(\bm{u}-\bm{u}_h^*)\|_0
    &\lesssim\|\mathcal{A} \bm{\sigma}_h-\bm{\epsilon}(\tilde{\bm{u}}_h)\|_0
    +\|\bm{\epsilon}_{\mathcal{T}_h}(\bm{u}_h^*)-\bm{\epsilon}(\tilde{\bm{u}}_h)\|_0\\
    &\quad+\left(\sum_{K\in\mathcal{T}_h}h_{K}^2\|\lambda\bm{u}+\bmdiv\bm{\sigma}_h\|^2_{K,0}\right)^{1/2}
    +\bm{\Theta}.
\end{align*}  
Another triangle inequality yields
\begin{align*}
    \|\mathcal{A}(\bm{\sigma}-\bm{\sigma}_h)\|_0+\|\bm{\epsilon}_{\mathcal{T}_h}(\bm{u}-\bm{u}_h^*)\|_0
    &\lesssim\|\mathcal{A} \bm{\sigma}_h-\bm{\epsilon}_{\mathcal{T}_h}(\bm{u}_h^*)\|_0
    +\|\bm{\epsilon}_{\mathcal{T}_h}(\bm{u}_h^*)-\bm{\epsilon}(\tilde{\bm{u}}_h)\|_0\\
    &\quad+\left(\sum_{K\in\mathcal{T}_h}h_{K}^2\|\lambda^*_h \bm{u}^*_h+\bmdiv\bm{\sigma}_h\|^2_{K,0}\right)^{1/2}
    +\bm{\Upsilon}.
\end{align*}
Using \eqref{karakashian}, the desired estimate holds.
\end{proof}
\begin{theorem}
The a~posteriori error estimator $\eta^2$
provides an upper bound of the
post-processed  eigenvalue error 
for sufficiently small mesh size h, up to the
higher order term $\bm{\Upsilon}^2$,
\begin{align*}
|\lambda-\lambda^{*}_h|\lesssim \eta^2+\bm{\Upsilon}^2.
\end{align*}
\end{theorem}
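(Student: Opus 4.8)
The plan is to derive the eigenvalue error bound directly from the already-established identity \eqref{eq:p4.3}, which expresses $\lambda-\lambda_h^*$ in terms of quantities that the estimator $\eta$ controls. Recall that the earlier proof rewrote
\begin{align*}
    \lambda-\lambda_h^*=2\nu\|\mathcal{A}(\bm{\sigma}-\bm{\sigma}_h)\|_0^2
    +\lambda_h^*\|\bm{u}-\bm{u}_h^*\|_0^2
    +2(\bmdiv(\bm{\sigma}-\bm{\sigma}_h),\bm{u}-\bm{u}_h^*)
    +2(\lambda-\lambda_h^*)(\bm{u},\bm{u}-\bm{u}_h^*).
\end{align*}
First I would absorb the last term: since $\|\bm{u}-\bm{u}_h^*\|_0\to 0$, for $h$ small enough $2|(\bm{u},\bm{u}-\bm{u}_h^*)|<1/2$, so this term can be moved to the left, leaving $|\lambda-\lambda_h^*|$ bounded by the first three terms up to a constant.

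Next I would bound each of those three terms using the reliability estimate \eqref{mpost121} of Theorem~\ref{mpost11n11}. The first term $\|\mathcal{A}(\bm{\sigma}-\bm{\sigma}_h)\|_0^2$ and the symmetric-gradient part of the second term $\|\bm{\epsilon}_{\mathcal{T}_h}(\bm{u}-\bm{u}_h^*)\|_0^2$ are squares of exactly the left-hand side of \eqref{mpost121}, hence $\lesssim (\eta+\bm{\Upsilon})^2\lesssim \eta^2+\bm{\Upsilon}^2$. For the $\bm{L}^2$ piece $\|\bm{u}-\bm{u}_h^*\|_0^2$ in the second term I would invoke a Poincar\'e--Friedrichs type inequality for the broken space (using the jump term already present in $\eta^2$ to control the nonconformity), so that $\|\bm{u}-\bm{u}_h^*\|_0\lesssim \|\bm{\epsilon}_{\mathcal{T}_h}(\bm{u}-\bm{u}_h^*)\|_0 + (\text{jump terms})\lesssim \eta+\bm{\Upsilon}$. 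For the cross term I would apply Cauchy--Schwarz, $|(\bmdiv(\bm{\sigma}-\bm{\sigma}_h),\bm{u}-\bm{u}_h^*)|\le \|\bmdiv(\bm{\sigma}-\bm{\sigma}_h)\|_0\,\|\bm{u}-\bm{u}_h^*\|_0$, and here the factor $\|\bm{u}-\bm{u}_h^*\|_0\lesssim\eta+\bm{\Upsilon}$ already carries the full estimator, while the divergence factor is itself of higher order by Lemma~\ref{eigerr2}; combining gives a bound $\lesssim(\eta+\bm{\Upsilon})\cdot(\text{h.o.t.})$ which is subsumed into $\eta^2+\bm{\Upsilon}^2$.

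Collecting these, every term on the right is controlled by $(\eta+\bm{\Upsilon})^2\lesssim \eta^2+\bm{\Upsilon}^2$, yielding the claimed inequality. The main obstacle I anticipate is the $\bm{L}^2$-control of $\|\bm{u}-\bm{u}_h^*\|_0$ by the estimator: because $\bm{u}_h^*$ is only piecewise $H^1$ and nonconforming, one cannot apply the standard Poincar\'e inequality directly, and the jump contribution $\sum_E h_E^{-1}\|[\bm{u}_h^*]\|_{E,0}^2$ in $\eta^2$ is precisely what makes a broken Poincar\'e inequality applicable. A secondary subtlety is verifying that the cross term and the residual volume term $\sum_K h_K^2\|\lambda_h^*\bm{u}_h^*+\bmdiv\bm{\sigma}_h\|_{K,0}^2$ are genuinely of quadratic or higher order so that, after squaring the linear reliability bound, nothing of order lower than $\eta^2+\bm{\Upsilon}^2$ survives. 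Once the broken Poincar\'e inequality is in hand the remaining steps are routine applications of Cauchy--Schwarz and Theorem~\ref{mpost11n11}.
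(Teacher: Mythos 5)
Your overall strategy --- start from the identity for $\lambda-\lambda_h^*$, absorb the term $2(\lambda-\lambda_h^*)(\bm{u},\bm{u}-\bm{u}_h^*)$ for $h$ small, and control what remains through the reliability estimate \eqref{mpost121} --- is the same as the paper's, and your treatment of $\|\mathcal{A}(\bm{\sigma}-\bm{\sigma}_h)\|_0^2$ and of $\lambda_h^*\|\bm{u}-\bm{u}_h^*\|_0^2$ is fine (the latter even more simply than you propose: $\|\bm{u}-\bm{u}_h^*\|_0$ already appears linearly inside $\bm{\Theta}$, so $\|\bm{u}-\bm{u}_h^*\|_0^2\lesssim\bm{\Theta}^2\lesssim\bm{\Upsilon}^2$ and no broken Poincar\'e or Korn inequality is needed). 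The genuine gap is in the cross term $2(\bmdiv(\bm{\sigma}-\bm{\sigma}_h),\bm{u}-\bm{u}_h^*)$. After Cauchy--Schwarz you are left with the factor $\|\bmdiv(\bm{\sigma}-\bm{\sigma}_h)\|_0$, which you dispose of by citing the a~priori estimate of Lemma~\ref{eigerr2}. That does not work here for two reasons. First, Lemma~\ref{eigerr2} assumes $\bm{\sigma}\in\bm{H}^{k+2}(\Omega;\mathbb{S})$ and $\bm{u}\in\bm{H}^{k+2}(\Omega)$, regularity that is not hypothesized in this theorem and that fails precisely in the situations (L-shaped and slit domains) where the a~posteriori bound is of interest; importing an a~priori rate also undermines the point of a computable bound. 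Second, even granting that rate, a product of the form $\mathcal{O}(h^m)\cdot(\eta+\bm{\Upsilon})$ is not $\lesssim\eta^2+\bm{\Upsilon}^2$ without an additional, unproved lower bound $h^m\lesssim\eta+\bm{\Upsilon}$; and if you instead apply Young's inequality you are stuck with $\|\bmdiv(\bm{\sigma}-\bm{\sigma}_h)\|_0^2$, which $\eta^2$ does not control --- the estimator only contains the $h_K$-weighted residual $h_K\|\lambda_h^*\bm{u}_h^*+\bmdiv\bm{\sigma}_h\|_{0,K}$, i.e.\ information about the divergence error in $\bm{H}^{-1}$, not in $\bm{L}^2$.

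The paper's way around this is exactly the step you skipped: insert the conforming average $\tilde{\bm{u}}_h\in\bm{H}^1_0(\Omega)$, split $\bm{u}-\bm{u}_h^*=(\bm{u}-\tilde{\bm{u}}_h)+(\tilde{\bm{u}}_h-\bm{u}_h^*)$, and integrate the conforming part by parts, $(\bmdiv(\bm{\sigma}-\bm{\sigma}_h),\bm{u}-\tilde{\bm{u}}_h)=-(\bm{\sigma}-\bm{\sigma}_h,\nabla(\bm{u}-\tilde{\bm{u}}_h))$, so that only $\|\bm{\sigma}-\bm{\sigma}_h\|_0$ is required. This quantity is then bounded by $\|\mathcal{A}(\bm{\sigma}-\bm{\sigma}_h)\|_0+\|\bmdiv(\bm{\sigma}-\bm{\sigma}_h)\|_{-1}$ via the trace estimate of \cite[Proposition 3.1, IV.3]{BF}, and the $\bm{H}^{-1}$ norm equals $\|Res_H\|_{\bm{H}^*}$, which is controlled by the estimator through \eqref{eq:resequiv} and Lemma~\ref{posprob12}. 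The nonconforming remainder $(\bmdiv\bm{\sigma}_h+\lambda_h^*\bm{u}_h^*,\tilde{\bm{u}}_h-\bm{u}_h^*)$ is handled with an $h_K^{\pm1}$-weighted Cauchy--Schwarz inequality and \eqref{karakashian}, producing precisely the volume residual and jump contributions of $\eta^2$. Without this conforming/nonconforming splitting and the integration by parts, the cross term cannot be bounded by $\eta^2+\bm{\Upsilon}^2$.
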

\begin{proof}
Adding and subtracting $\tilde{\bm{u}}_h$ in the last term of \eqref{eq:p4.3} yields
\begin{align*}
\lambda- \lambda_h^*&=2\nu\|\mathcal{A}(\bm{\sigma}-\bm{\sigma}_h)\|_{0}^2
-\lambda_h^*\|\bm{u}-\bm{u}_h^*\|^2_{0}-2(\bmdiv\bm{\sigma}_h
+\lambda_h^*\bm{u}_h^*,\bm{u}-\tilde{\bm{u}}_h)\\
&\quad-2(\bmdiv\bm{\sigma}_h+\lambda_h^*\bm{u}_h^*,\tilde{\bm{u}}_h-\bm{u}_h^*)\\
&=2\nu\|\mathcal{A}(\bm{\sigma}-\bm{\sigma}_h)\|_{0}^2
-\lambda_h^*\|\bm{u}-\bm{u}_h^*\|^2_{0}
-2(\bm{\mbox{div}}\bm{\sigma}_h-\bm{\mbox{div}}\bm{\sigma},\bm{u}-\tilde{\bm{u}}_h)\\
&\quad-2(\lambda_h^*\bm{u}_h^*-\lambda\bm{u},\bm{u}-\tilde{\bm{u}}_h)
-2(\bmdiv\bm{\sigma}_h+\lambda_h^*\bm{u}_h^*,\tilde{\bm{u}}_h-\bm{u}_h^*).
\end{align*}
Applying Gauss theorem, implies
\begin{align*}
\lambda- \lambda_h^*
&=2\nu\|\mathcal{A}(\bm{\sigma}-\bm{\sigma}_h)\|_{0}^2+\lambda_h^*\|\bm{u}-\bm{u}_h^*\|^2_{0}-2(\bm{\sigma}-\bm{\sigma}_h,\nabla(\bm{u}-\tilde{\bm{u}}_h))\\
&\quad+ 2\lambda_h^*(\bm{u}-\bm{u}_h^*,\bm{u}-\tilde{\bm{u}}_h) +2(\lambda-\lambda_h^*)(\bm{u},\bm{u}-\tilde{\bm{u}}_h)\\
&\quad-2(\bm{\mbox{div}}\bm{\sigma}_h+\lambda_h^*\bm{u}_h^*,\tilde{\bm{u}}_h-\bm{u}_h^*).
\end{align*} 
Using Cauchy-Schwarz inequality, we have
\begin{align*}
|\lambda- \lambda_h^*|
&\lesssim\|\mathcal{A}(\bm{\sigma}-\bm{\sigma}_h)\|_{0}^2+\|\bm{u}-\bm{u}_h^*\|^2_{0}
+ \|\bm{\sigma}-\bm{\sigma}_h\|_{0}^2
+\|\nabla(\bm{u}-\tilde{\bm{u}}_h)\|_0^2
+\|\bm{u}-\tilde{\bm{u}}_h\|_0^2\\
&\quad+ (\lambda-\lambda_h^*)^2
+\sum_{K\in\mathcal{T}_h}h_K^2\|\lambda_h^*\bm{u}_h^*+\bmdiv\bm{\sigma}_h\|^2_{0,K}
+ \sum_{K\in\mathcal{T}_h}h_K^{-2}\|\tilde{\bm{u}}_h-\bm{u}_h^*\|_{0,K}.
\end{align*}
From the trace estimate \cite[Proposition 3.1, IV.3]{BF}, we get
\begin{align*}
  \|\bm{\sigma}-\bm{\sigma}_h\|_{0} \lesssim \|\mathcal{A}(\bm{\sigma}-\bm{\sigma}_h)\|_{0} + \|\bmdiv(\bm{\sigma}-\bm{\sigma}_h)\|_{-1}.
\end{align*}
Note that
\begin{align*}
\|\bmdiv(\bm{\sigma}-\bm{\sigma}_h)\|_{-1} 
= \sup_{\substack{\bm{v}\in \bm{H}\\ |\bm{v}|_1=1}} | (\lambda\bm{u}+\bmdiv\bm{\sigma}_h,\bm{v}) |
= \|Res_H\|_{\bm{H}^*}.
\end{align*}
Therefore, \eqref{eq:resequiv}, Korn's and triangle inequalities lead to
\begin{align*}
|\lambda- \lambda_h^*| &\lesssim
\|\mathcal{A}(\bm{\sigma}-\bm{\sigma}_h)\|_{0}^2
+ \|\bm{\epsilon}_{\mathcal{T}_h}(\bm{u}-\bm{u}_h^*)\|_0^2
+\sum_{K\in\mathcal{T}_h}h_K^2\|\lambda_h^*\bm{u}_h^*+\bmdiv\bm{\sigma}_h\|^2_{0,K} \\
&\quad +\|\bm{u}_h^*-\tilde{\bm{u}}_h\|^2_{0}+\|\bm{\epsilon}_{\mathcal{T}_h}(\bm{u}_h^*-\tilde{\bm{u}}_h)\|_0^2
+\sum_{K\in\mathcal{T}_h}h_K^{-2}\|\bm{u}_h^*-\tilde{\bm{u}}_h\|^2_{0,K}\\
&\quad +\|\bm{u}-\bm{u}_h^*\|^2_{0} + (\lambda-\lambda_h^*)^2.
\end{align*}
For $h$ small enough such that $|\lambda-\lambda_h^*| < 1/2$, we conclude with \eqref{karakashian}
and \eqref{mpost121}
\begin{align*}
|\lambda-\lambda_h^*|&\lesssim\eta^2+\bm{\Upsilon}^2.
\end{align*}
\end{proof}
%
\section{Numerical experiments}\label{comre}
In this section, we present numerical results
for the square domain, the L-shaped domain and the slit domain.
We verify the proven (asymptotic) reliability of the a~posteriori
error estimator of Section~\ref{postera} and 
show empirically its efficiency.
We present numerical results that show sixth order
convergence of the post-processed eigenvalues of Section~\ref{secpost} on adaptively
refined meshes even for the (nonconvex) L-shaped and slit domains.
In all experiments, we take to the lowest order Arnold-Winther finite element $(k=1)$, 
the parameter $\nu=1$ and the polynomial order $P_3(\mathcal{T}_h, \mathbb{R}^2)$ 
for the post-processing. 
\par
Since the exact eigenvalues for all three domains are unknown, we
compare the computed eigenvalues to some reference values with high
accuracy. Note that the eigenvalues of the Stokes eigenvalue
problem are related to the eigenvalues of the buckling eigenvalue problem of clamped plates
via the stream function formulation.
Hence, we can use known \cite{BPEBPT} or computed reference eigenvalues for
the plate eigenvalue problem.
\par
We consider the standard adaptive finite element loop
\begin{align*}
\mbox{Solve}\rightarrow\mbox{Estimate}\rightarrow\mbox{Mark}\rightarrow\mbox{Refine},
\end{align*}  
that creates a sequence of adaptively refined (nested) regular meshes $(\mathcal{T_\ell})$
level index $\ell$.
For the algebraic eigenvalue solver we use the Matlab implementation of ARPACK \cite{ARPACK}.
To estimate the error, we compute the a~posteriori error estimator of Section~\ref{postera}
\begin{align*}
\eta_\ell^2&=\sum_{K\in\mathcal{T}_\ell}\|\mathcal{A}\bm{\sigma}_\ell-\bm{\epsilon}_{\mathcal{T}_\ell} (\bm{u}_\ell^*)\|_{0,K}^2+
\sum_{E\in \mathcal{E}_\ell}h_{E}^{-1}\|[\bm{u}_\ell^*]\|_{0,E}^2+\sum_{K\in\mathcal{T}_\ell}h_{K}^2\|\lambda^*_\ell \bm{u}^*_\ell+
\bmdiv\bm{\sigma}_\ell\|_{0,K}^2.
\end{align*}
In the above formula, $\bm{u}^*_\ell\in \bm{W}_\ell^*$ is the solution of the local post-processing 
which is given in Section \ref{secpost}.
Since the conforming function $\tilde{\bm{u}}_\ell$ of Section~\ref{postera} is closely related to $\bm{u}_\ell^*$
we also compare the error estimator $\eta_\ell^2$ to the heuristical estimator
\begin{align*}
\mu_\ell^2&=\sum_{K\in\mathcal{T}_\ell}\|\mathcal{A}\bm{\sigma}_\ell-\bm{\epsilon}(\tilde{\bm{u}}_\ell)\|_{0,K}^2+
\sum_{K\in\mathcal{T}_\ell}h_{K}^2\|\tilde{\lambda}_\ell \tilde{\bm{u}}_\ell+\bmdiv\bm{\sigma}_\ell\|_{0,K}^2,
\end{align*}
where we replaced $\bm{u}_\ell^*$ by $\tilde{\bm{u}}_\ell$, and
$\tilde{\lambda}_\ell$ is computed from \eqref{postrayle11}
with $\bm{u}_\ell^*$ replaced by $\tilde{\bm{u}}_\ell$.
We numerically demonstrate reliability and efficiency of $\mu_\ell^2$ for the eigenvalue error
$|\lambda-\tilde{\lambda}_\ell|$.
We mark triangles of the triangulation $\mathcal{T}_\ell$ in a minimal set 
of marked triangles $\mathcal{M}_\ell$ according to the 
bulk marking strategy \cite{Doerfler}, such that $\theta\eta_\ell^2\leq \eta_\ell^2(\mathcal{M}_\ell)$ 
for the bulk parameter $\theta= 1/2$, and refine the mesh
with the red-green-blue refinement strategy \cite{RV}.
\par
Let $N_\ell$ denote the degrees of freedom
$N_\ell:=\mbox{dim}(\bm{\Phi}_\ell)+\mbox{dim}(\bm{V}_\ell)$.
Note that for uniform meshes,  we have the relationship $\mathcal{O} (N_\ell^{-r})\approx \mathcal{O}(h_\ell^{2r}),\; r>0$.
%
%
\begin{figure}[tbp]
\centering
\subfigure[]{
\includegraphics[width = \figurewidtha\textwidth]{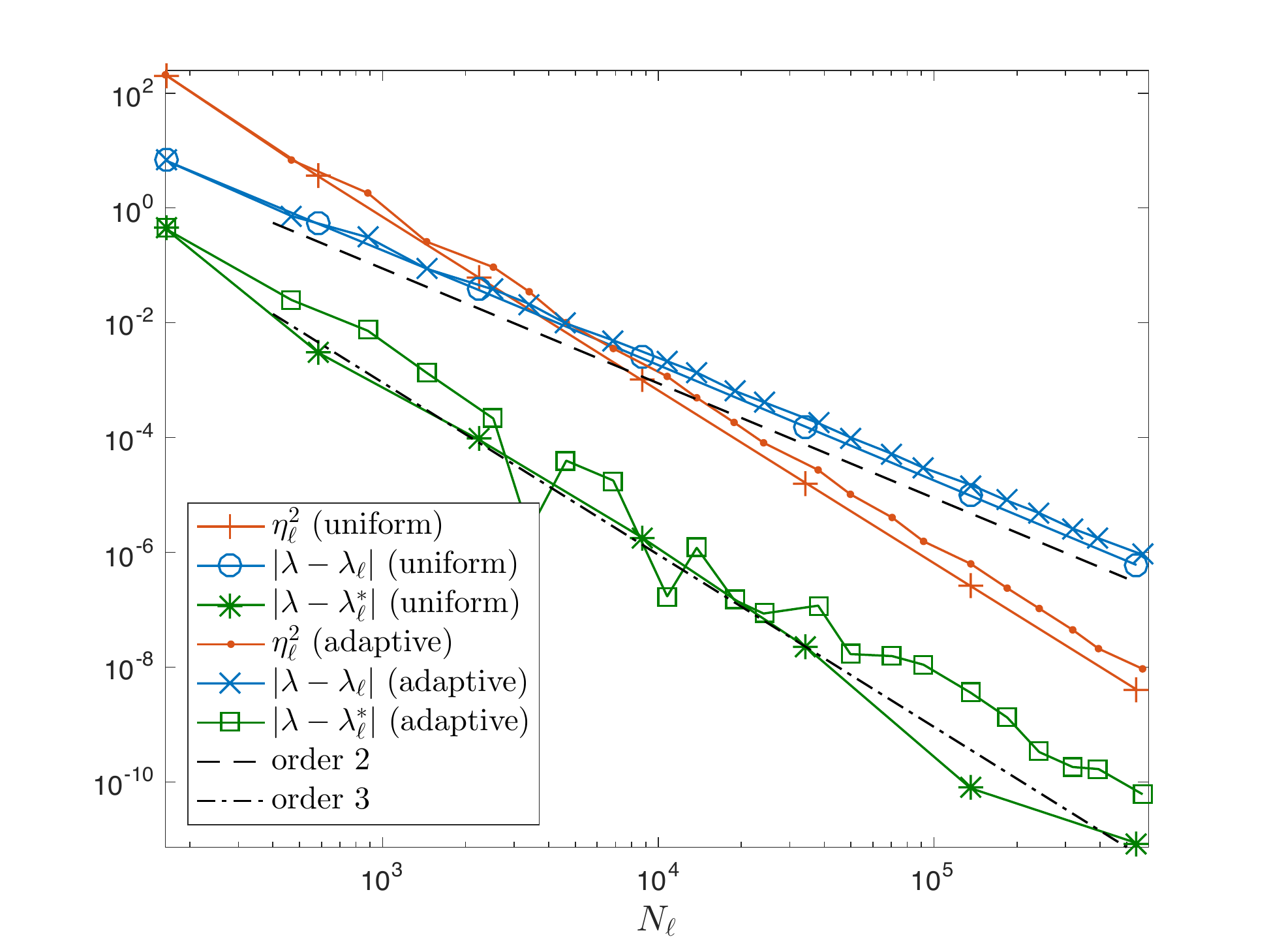}
\label{figex21}
}
\subfigure[]{
\includegraphics[width = \figurewidtha\textwidth]{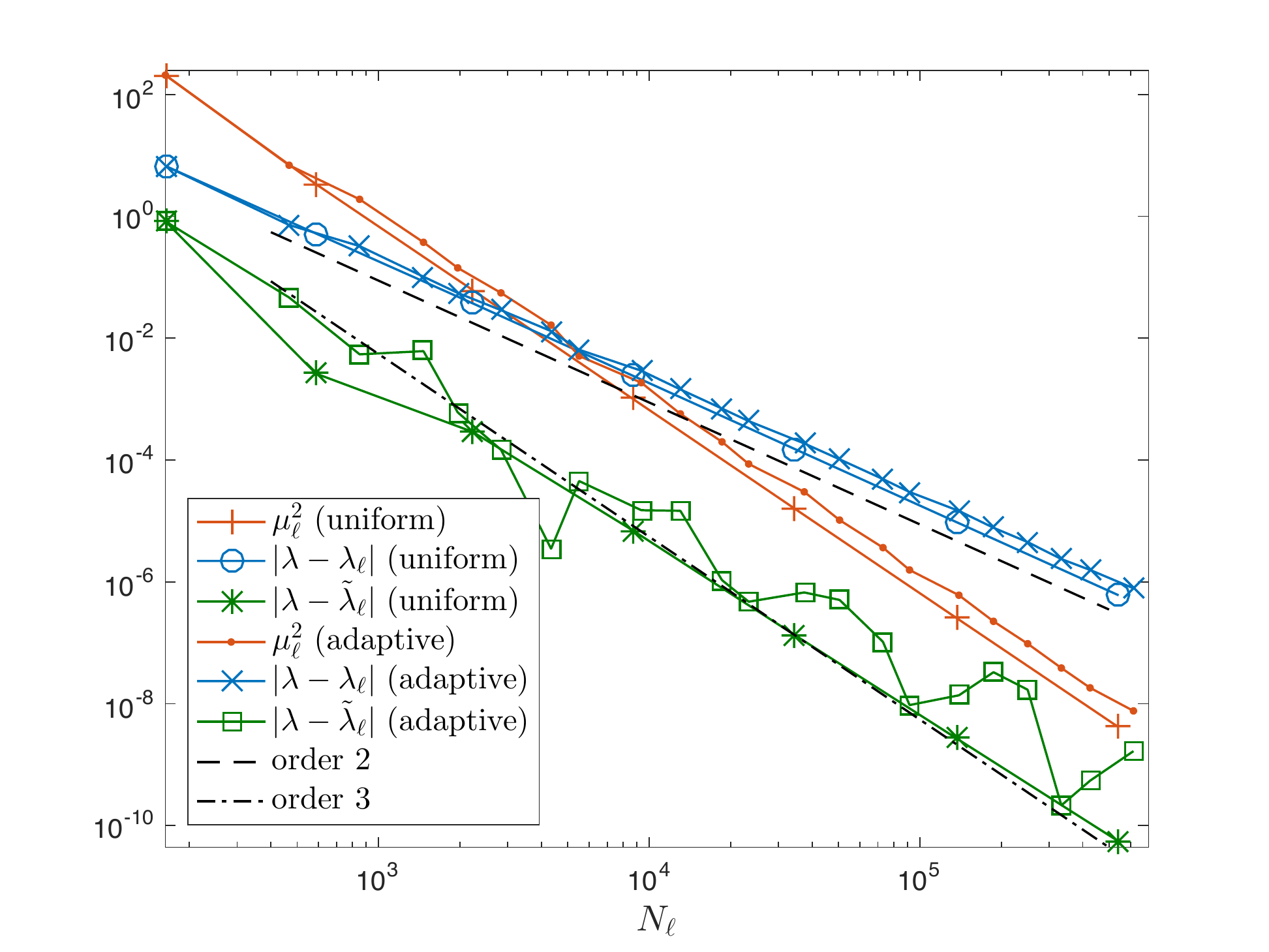}
\label{figex21mu}
}
\caption[]
{Convergence history of 
\subref{figex21}  $|\lambda-\lambda_{\ell}|, |\lambda-\lambda_{\ell}^*|$, $\eta_{\ell}^2$, \subref{figex21mu}  
$|\lambda-\lambda_{\ell}|, |\lambda-\tilde{\lambda}_{\ell}|$ and $\mu_{\ell}^2$ on uniformly and adaptively 
refined meshes for the square domain.}
\label{figex21aw}
\end{figure}
\begin{figure}[tbp]
\centering
\subfigure[]{
\includegraphics[width = \figurewidthb\textwidth]{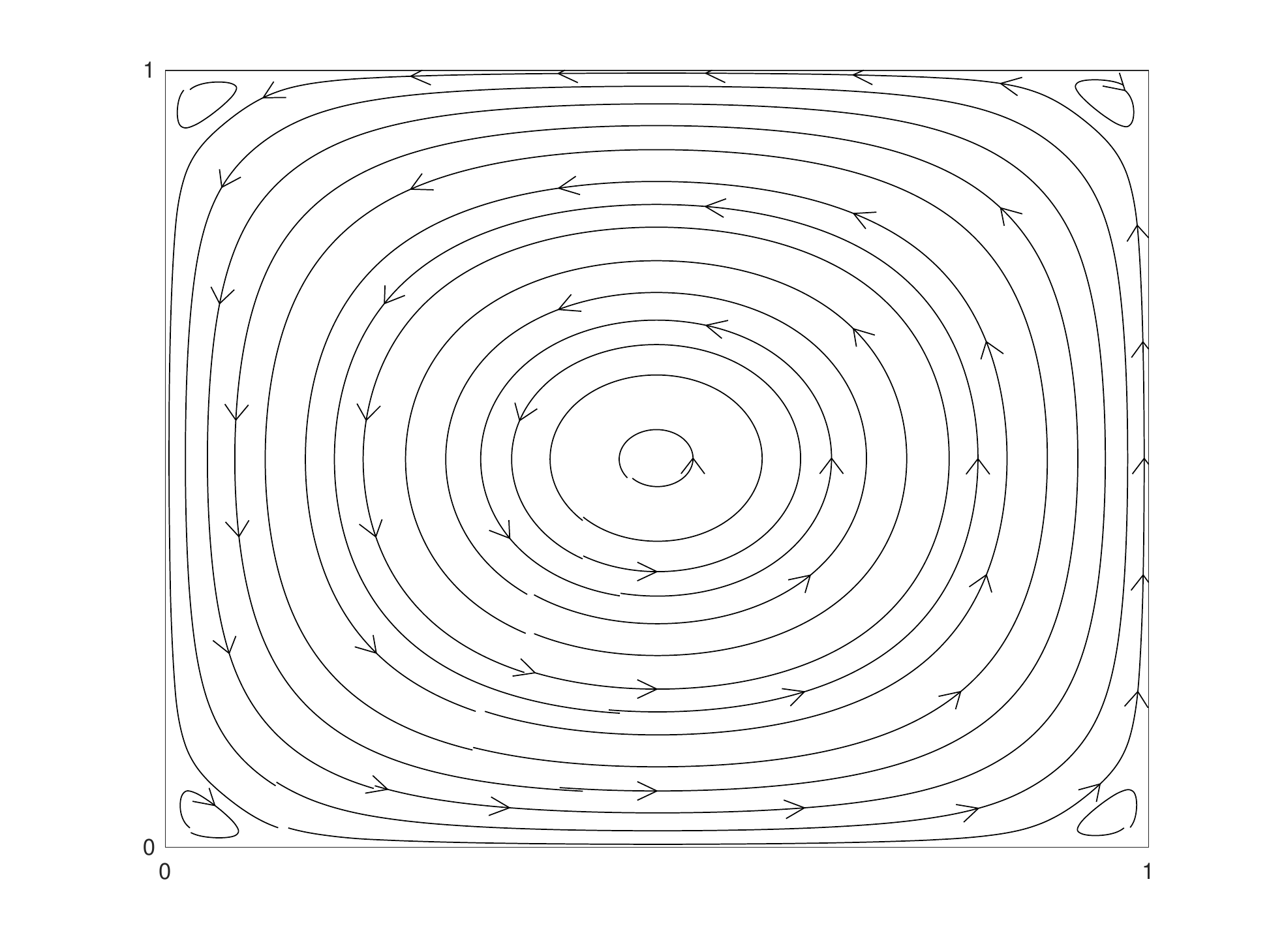}
\label{fig22a}
}
\subfigure[]{
\includegraphics[width = \figurewidthb\textwidth]{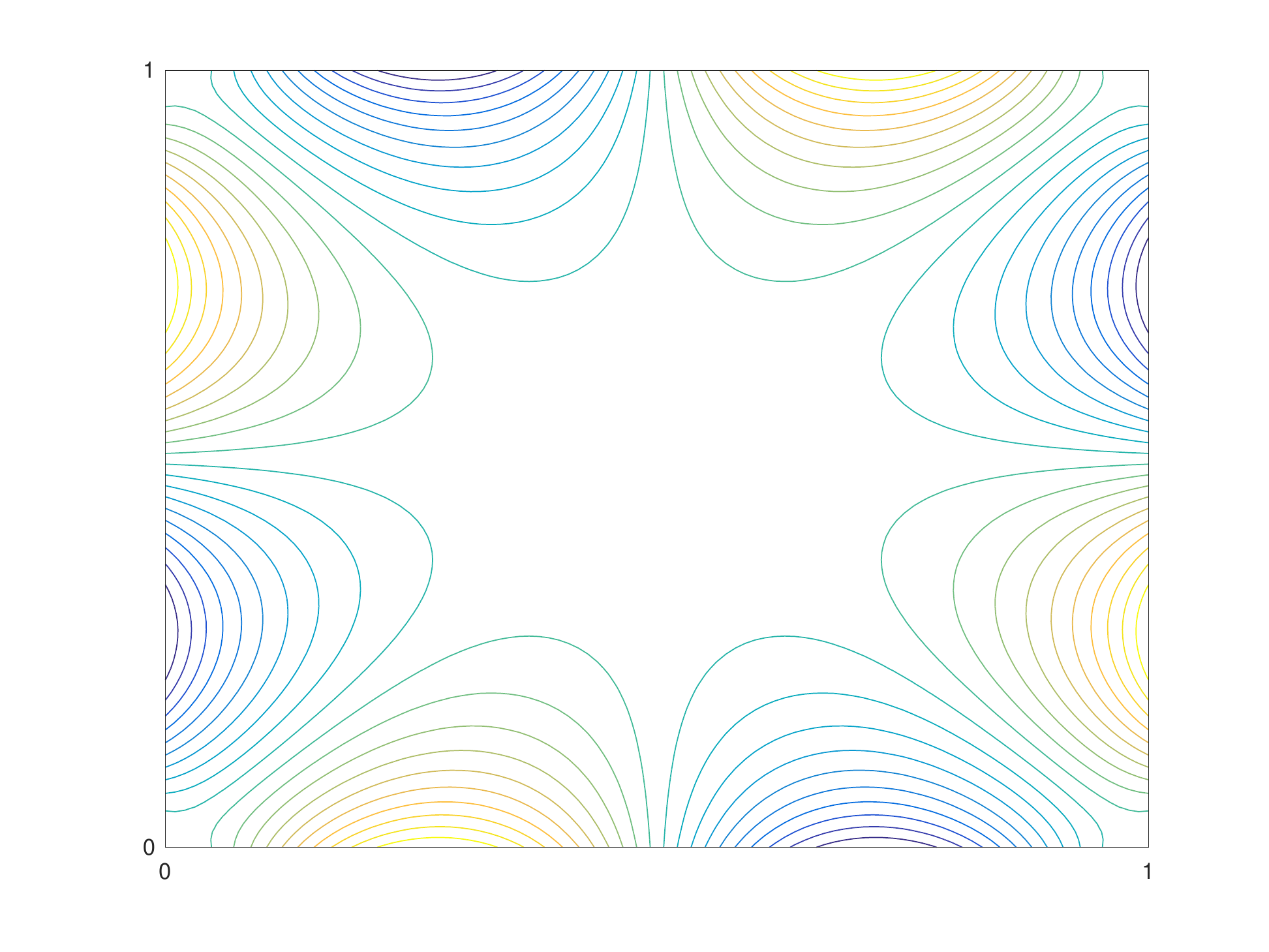}
\label{fig22c}
}
\caption[]
{\subref{fig22a} Streamline plot of the discrete eigenfunction $\bm{u}_{\ell}$.
\subref{fig22c} Plot of discrete pressure $p_{\ell}=-\mbox{tr}\bm{\sigma}_{\ell}/2$.}
\label{figex212}
\end{figure}
\subsection{Square domain}\label{testsquared}
In the first example, we consider the square domain $\Omega=(0,1)^2$.  
The reference value for the first eigenvalue 
$\lambda=52.344691168$ is taken from \cite{BPEBPT,CWQL}.
Figures \ref{figex21} and \ref{figex21mu} are devoted to the convergence history 
of the eigenvalue errors and a~posteriori error estimators.
Due to the smoothness of the eigenfunction,
the error of the post-processed eigenvalue
$\lambda_{\ell}^*$ and the corresponding a~posteriori error
estimator $\eta_\ell^2$ achieve optimal third order 
of convergence for both uniform and adaptive meshes. 
Moreover, the error for the eigenvalue $\tilde{\lambda}_{\ell}$ and 
the estimator $\mu_\ell^2$ also achieve optimal convergence of $\mathcal{O} (N^{-3})$
for both uniform and adaptive meshes. For both uniform and 
adaptive meshes, the convergence rate of 
the eigenvalue error of $\lambda_{\ell}$ is equal to $\mathcal{O} (N^{-2})$, which confirms the theoretical result 
\eqref{eigenlam12}. 
The streamline plot of the discrete eigenfunction $\bm{u}_{\ell}$ and 
a plot of the discrete pressure $p_{\ell}=-\mbox{tr}\bm{\sigma}_{\ell}/2$ 
are displayed in \ref{fig22a} and \ref{fig22c}, respectively. 
%
\subsection{L-shaped domain}\label{testlsha}
\begin{figure}[tbp]
\centering
\subfigure[]{
\includegraphics[width = \figurewidtha\textwidth]{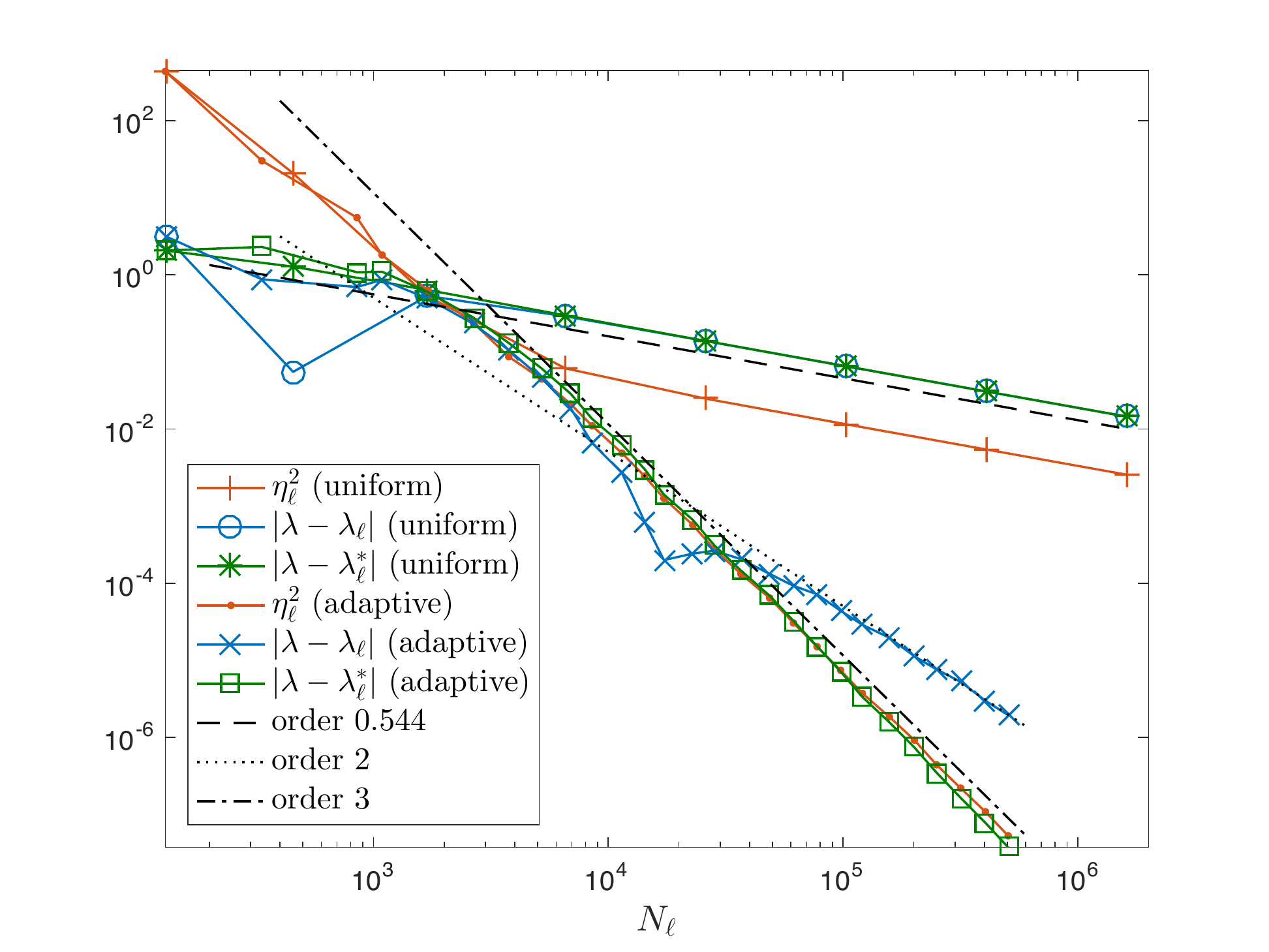}
\label{figLsh122a}
}
\subfigure[]{
\includegraphics[width = \figurewidtha\textwidth]{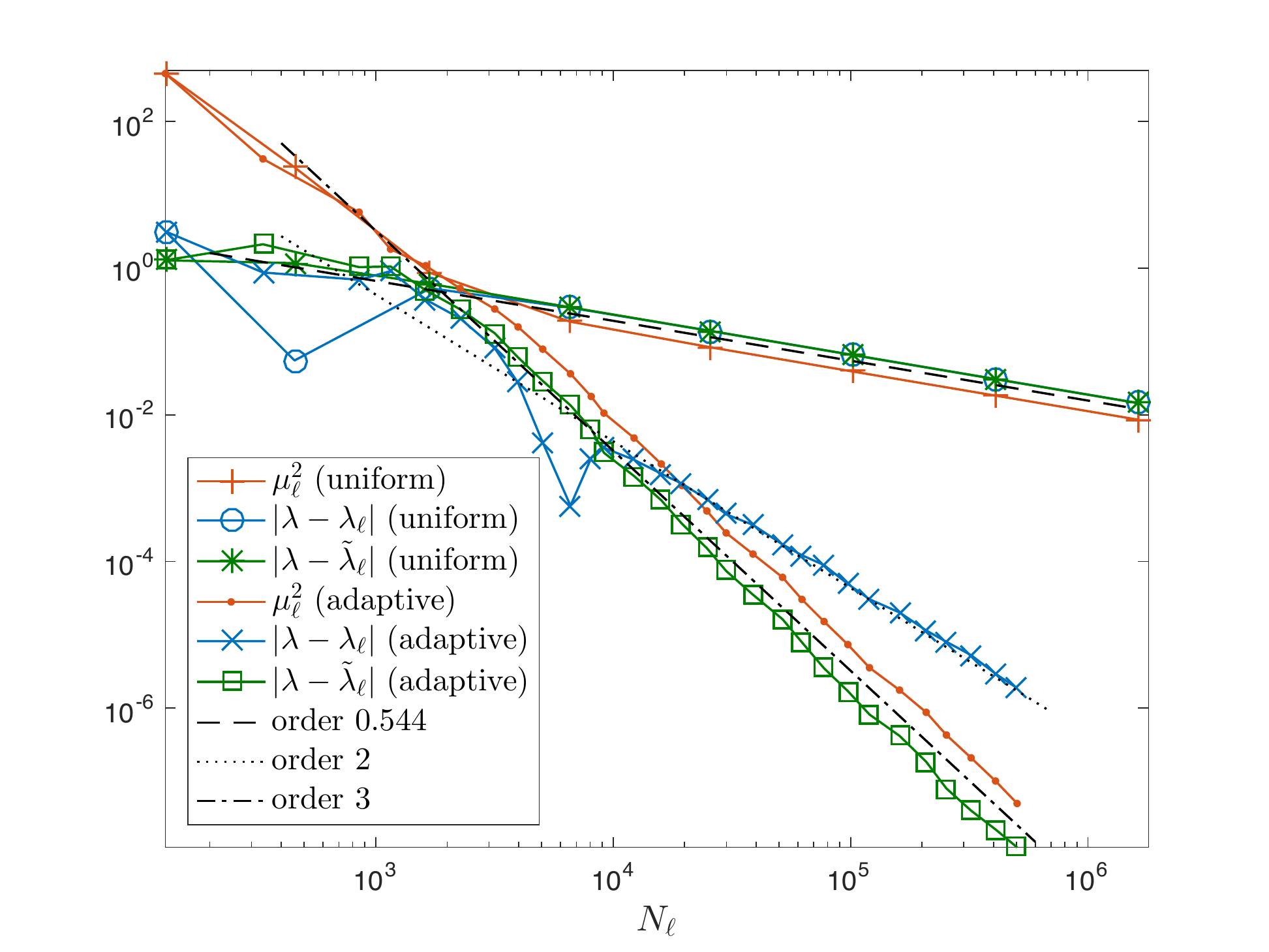}
\label{figLsh122c}
}
\caption[]
{Convergence history of  \subref{figLsh122a}  $|\lambda-\lambda_{\ell}|, |\lambda-\lambda_{\ell}^*|$, $\eta_{\ell}^2$   
\subref{figLsh122c} $|\lambda-\lambda_{\ell}|, |\lambda-\tilde{\lambda}_{\ell}|$ 
and $\mu_{\ell}^2$ on uniformly and adaptively refined meshes for the L-shaped domain.}
\label{figexLsh21}
\end{figure}
\begin{figure}[tbp]
\centering
\subfigure[]{
\includegraphics[width = \figurewidthb\textwidth]{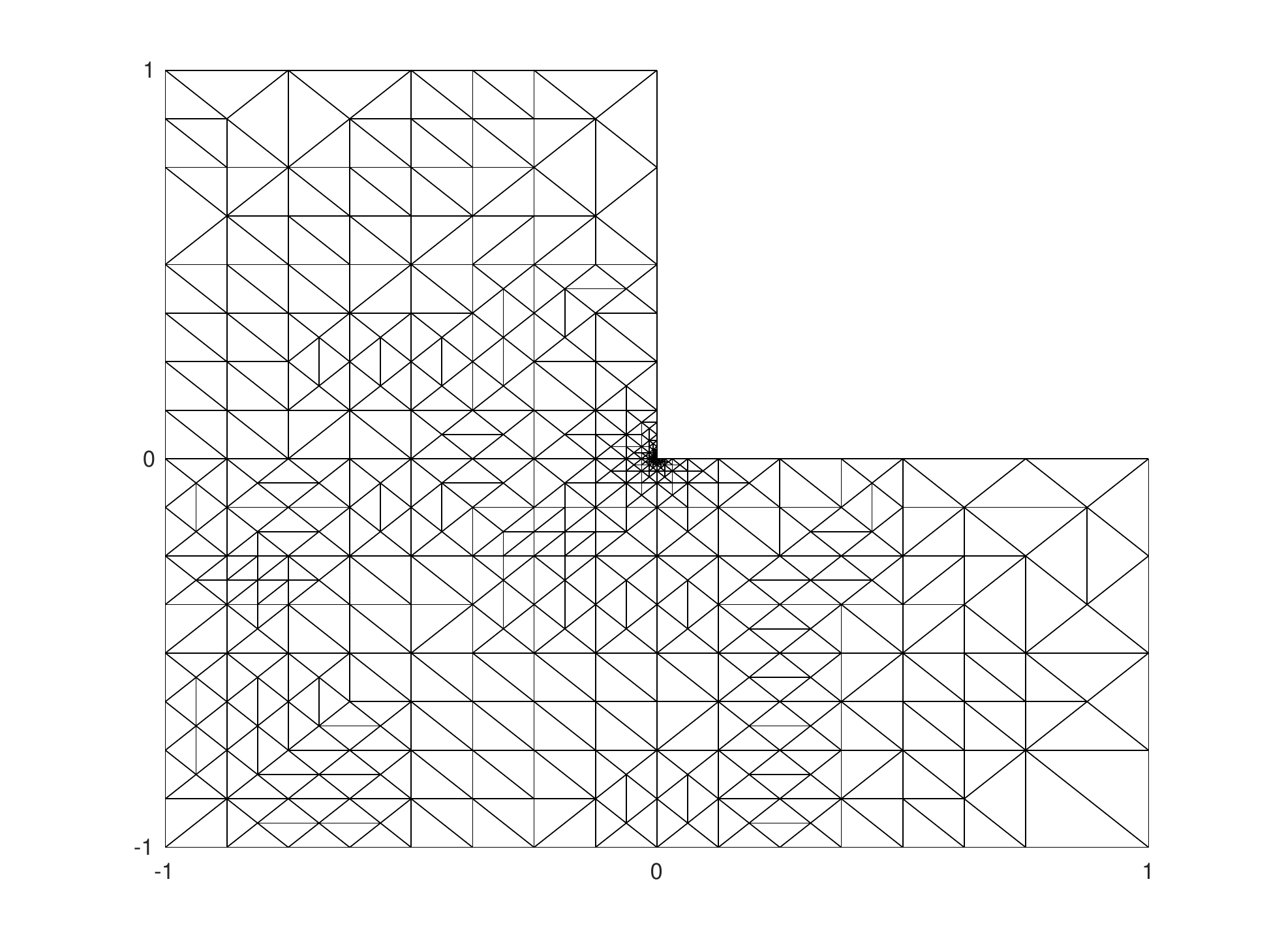}
\label{figLsh122aw}
}
\subfigure[]{
\includegraphics[width = \figurewidthb\textwidth]{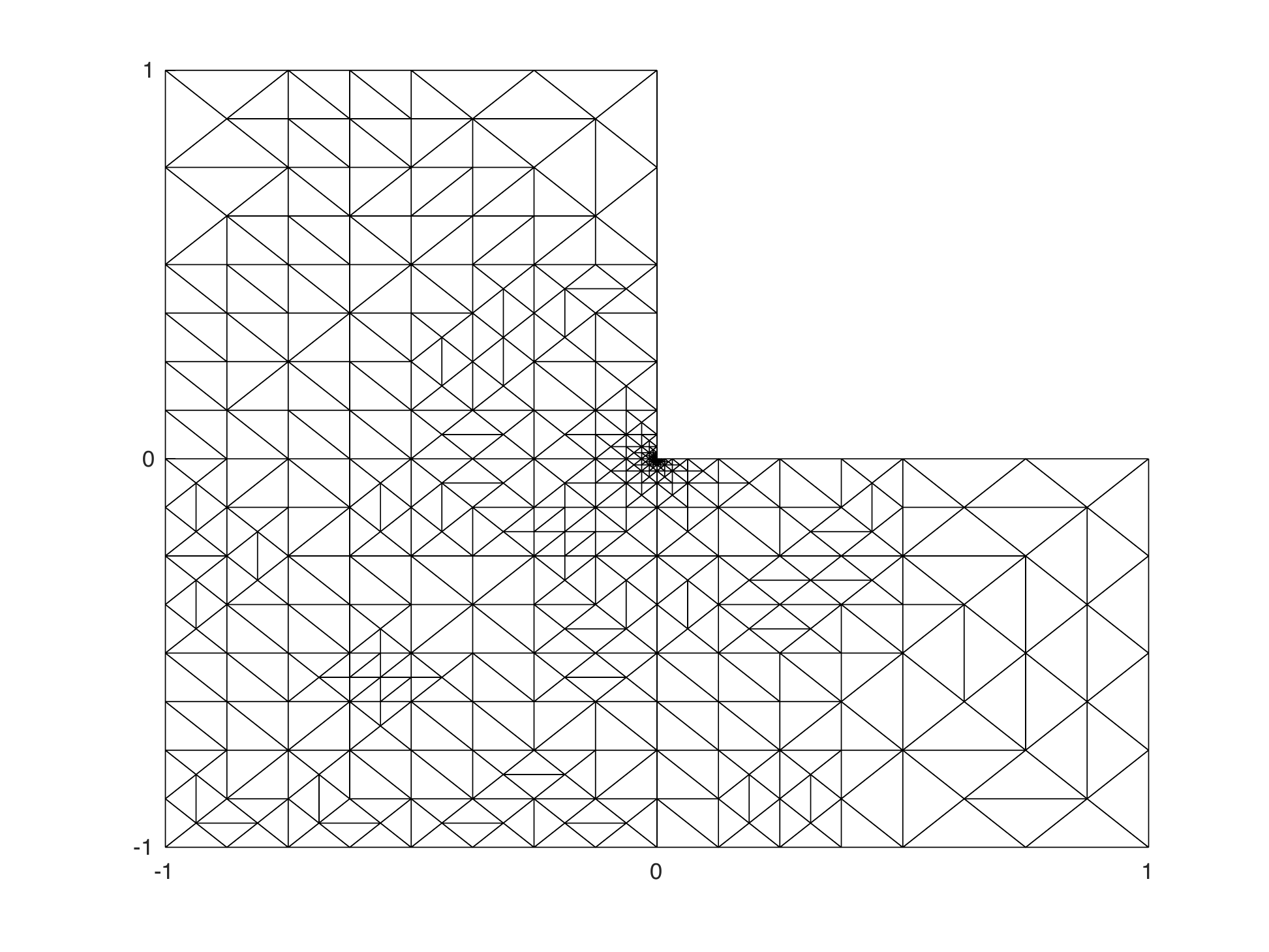}
\label{figLsh122cw}
}
\caption[]
{\subref{figLsh122a} Adaptive refined meshes for $\eta_{\ell}^2$ with $386$ nodes, 
\subref{figLsh122c} Adaptively refined meshes for $\mu_{\ell}^2$ with $392$ nodes.}
\label{figexLsh21w}
\end{figure}
\begin{figure}[tbp]
\centering
\subfigure[]{
\includegraphics[width = \figurewidthb\textwidth]{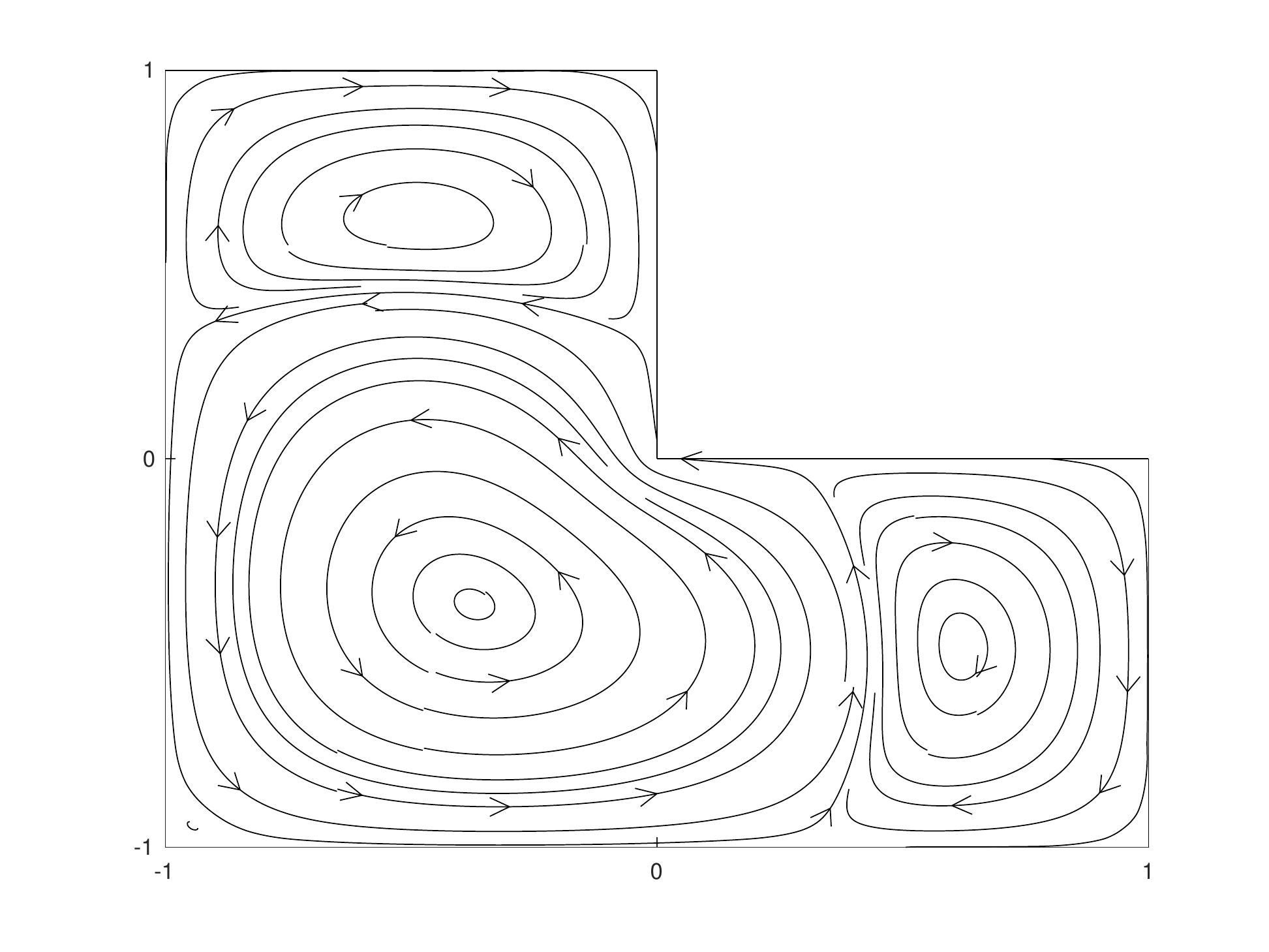}
\label{figLsh22a}
}
\subfigure[]{
\includegraphics[width = \figurewidthb\textwidth]{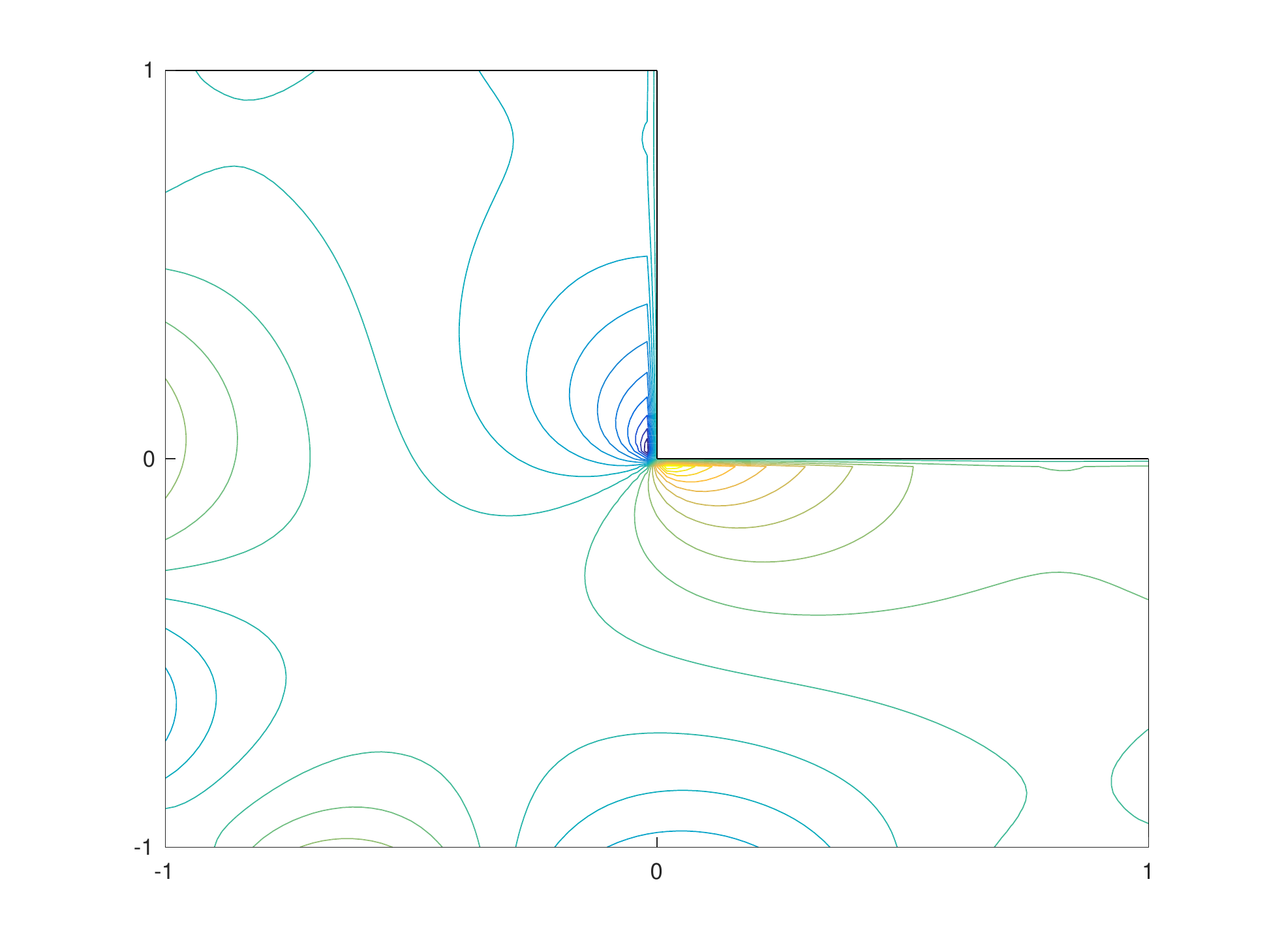}
\label{figLsh22c}
}
\caption[]
{\subref{figLsh22a}Streamline plot of the discrete eigenfunction $\bm{u}_{\ell}$. 
\subref{figLsh22c} Plot of discrete pressure $p_{\ell}=-\mbox{tr}\bm{\sigma}_{\ell}/2$.}
\label{figexLsh212}
\end{figure}
The second example is for the L-shaped domain with
$\Omega=(-1,1)^2\setminus [0,1]^2$.  Here, the 
domain in nonconvex and has a re-entrant corner
at the origin, which causes a singularity in the first eigenfunction.
To compute the eigenvalue error of the first eigenvalue,  
we take $\lambda=32.13269465$ as a reference value,
cf. to the values in \cite{BMS2014,SZ2017} scaled by $1/4$ due to the differently sized domain.
The convergence results for uniform meshes in Figure \ref{figLsh122a}  and \ref{figLsh122c}
show reduced orders of convergence $\mathcal{O}(N^{-0.544})$ 
for all eigenvalue errors 
and both error estimators. 
Furthermore, the convergence results based on 
adaptive refinement recover optimal higher order convergence $\mathcal{O}(N^{-3})$ 
of the post-processed eigenvalues  $\lambda_{\ell}^*$ 
and $\tilde{\lambda}_{\ell}$.
In both cases the corresponding a~posteriori error estimators $\eta_\ell^2$ and $\mu_\ell^2$
are reliable and efficient and close to the true error. 
Moreover, the eigenvalue errors for adaptive refinement are several orders of magnitude
below the eigenvalue errors for uniform refinement, which
illustrates the importance of adaptive mesh refinement.
Figures \ref{figLsh122aw} and  \ref{figLsh122cw} show two adaptively refined 
meshes for the a~posteriori error estimators $\eta_{\ell}^2$ and $\mu_{\ell}^2$, respectively.
Both meshes show strong refinement toward the origin.
Figures \ref{figLsh22a} and \ref{figLsh22c} show the discrete velocity and 
pressure as a streamline plot computed on an adaptive mesh.
%
\subsection{Slit domain}\label{testslitd} 
\begin{figure}[tbp]
\centering
\subfigure[]{
\includegraphics[width = \figurewidtha\textwidth]{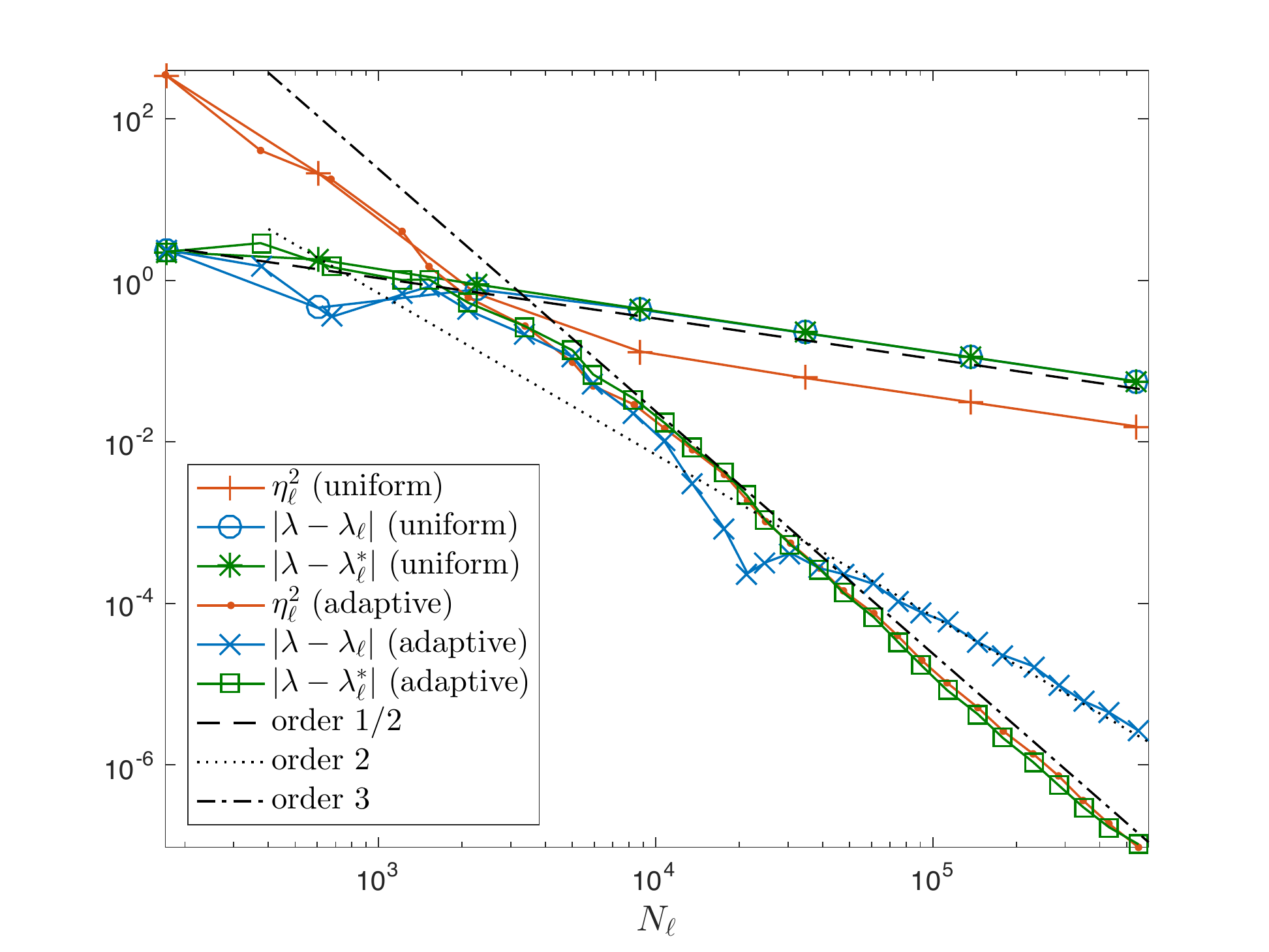}
\label{figSl122a}
}
\subfigure[]{
\includegraphics[width = \figurewidtha\textwidth]{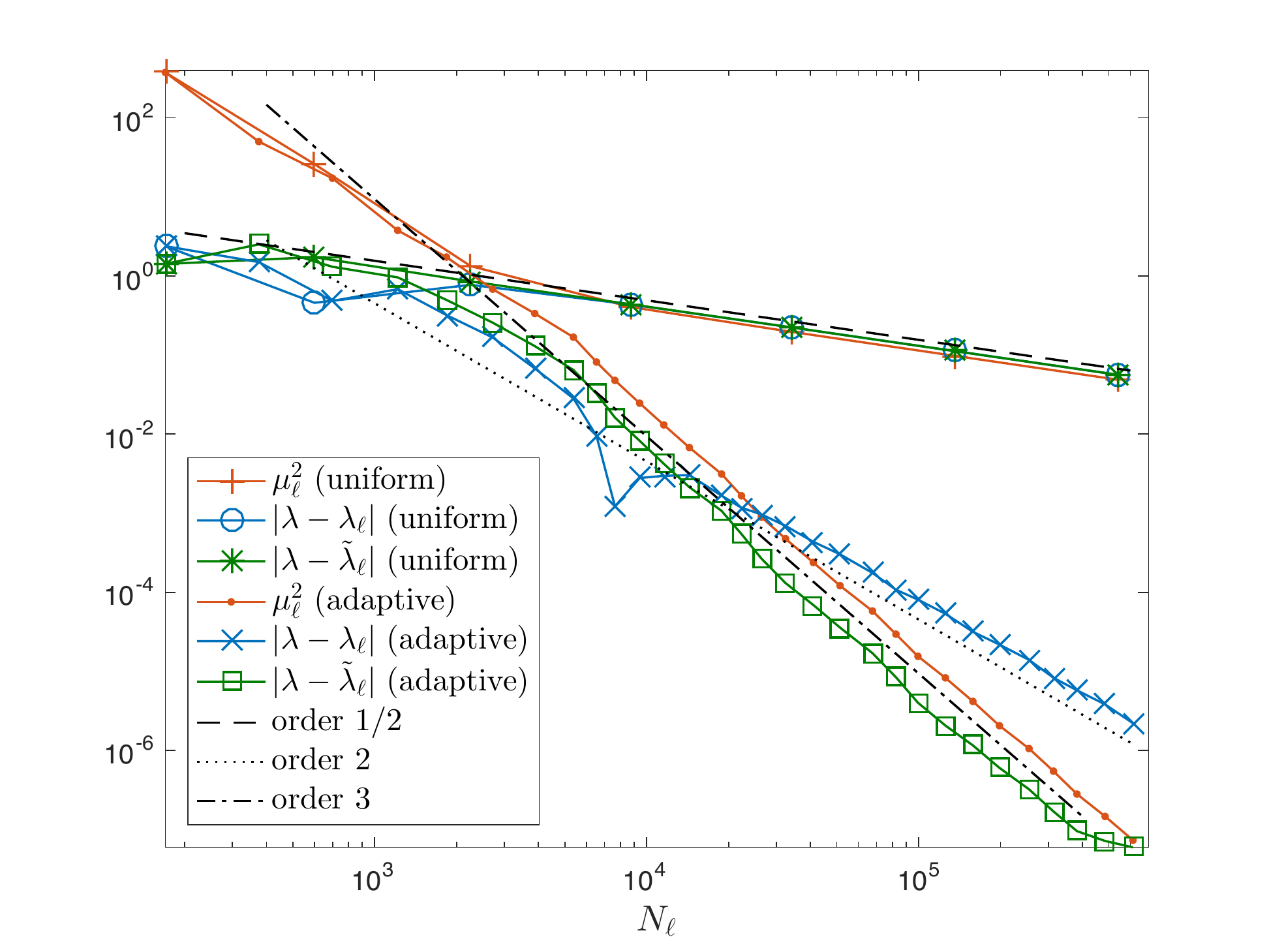}
\label{figSl122c}
}
\caption[]
{ Convergence history of \subref{figSl122a} $|\lambda-\lambda_{\ell}|, |\lambda-\lambda_{\ell}^*|$, $\eta_{\ell}^2$, \subref{figSl122c} $|
\lambda-\lambda_{\ell}|, |\lambda-\tilde{\lambda}_{\ell}|$ and 
$\mu_{\ell}^2$ on uniformly and adaptively refined meshes for the slit 
domain. }
\label{figexSl21}
\end{figure}
\begin{figure}[tbp]
\centering
\subfigure[]{
\includegraphics[width = \figurewidthb\textwidth]{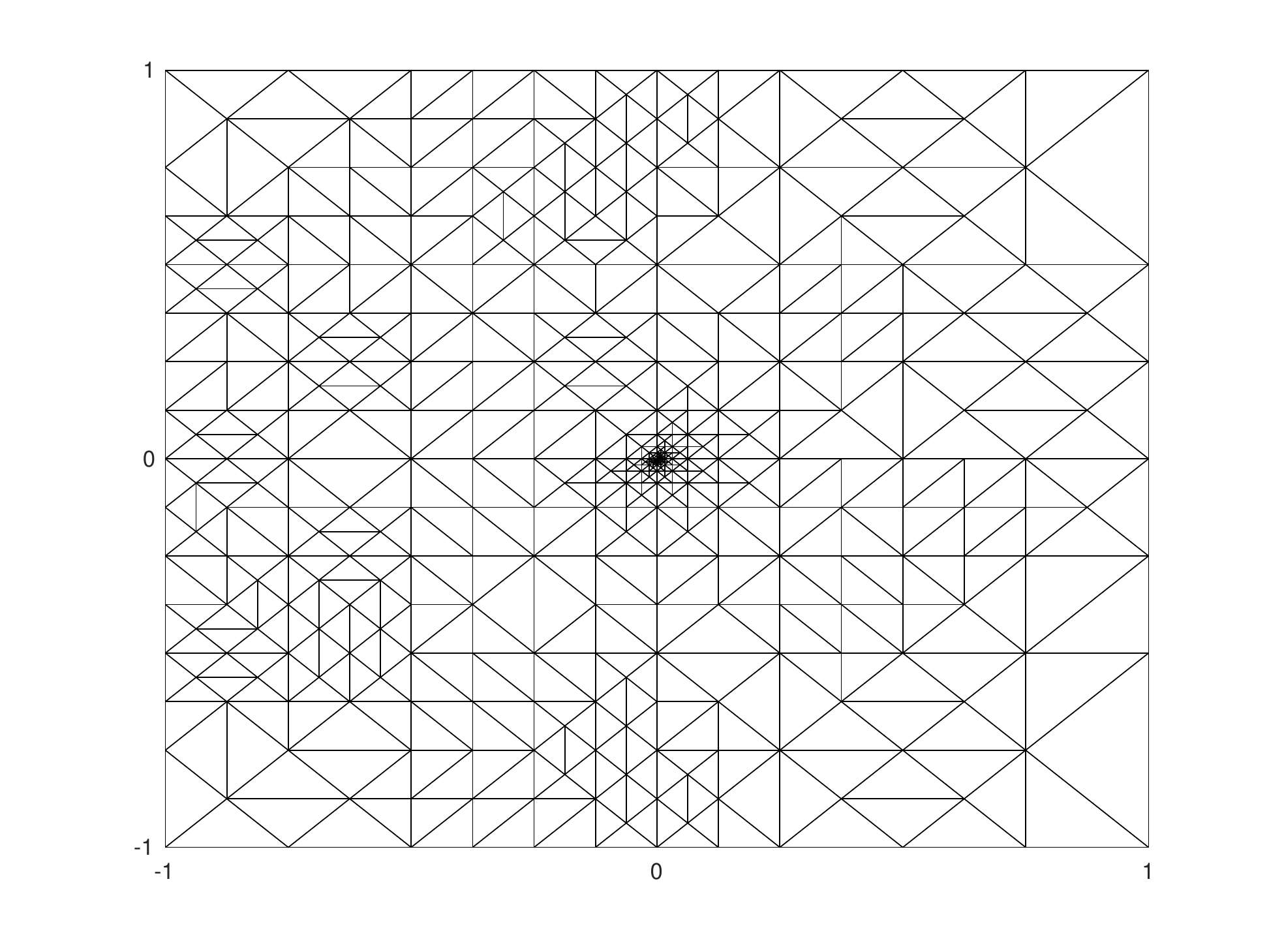}
\label{figSl122aw}
}
\subfigure[]{
\includegraphics[width = \figurewidthb\textwidth]{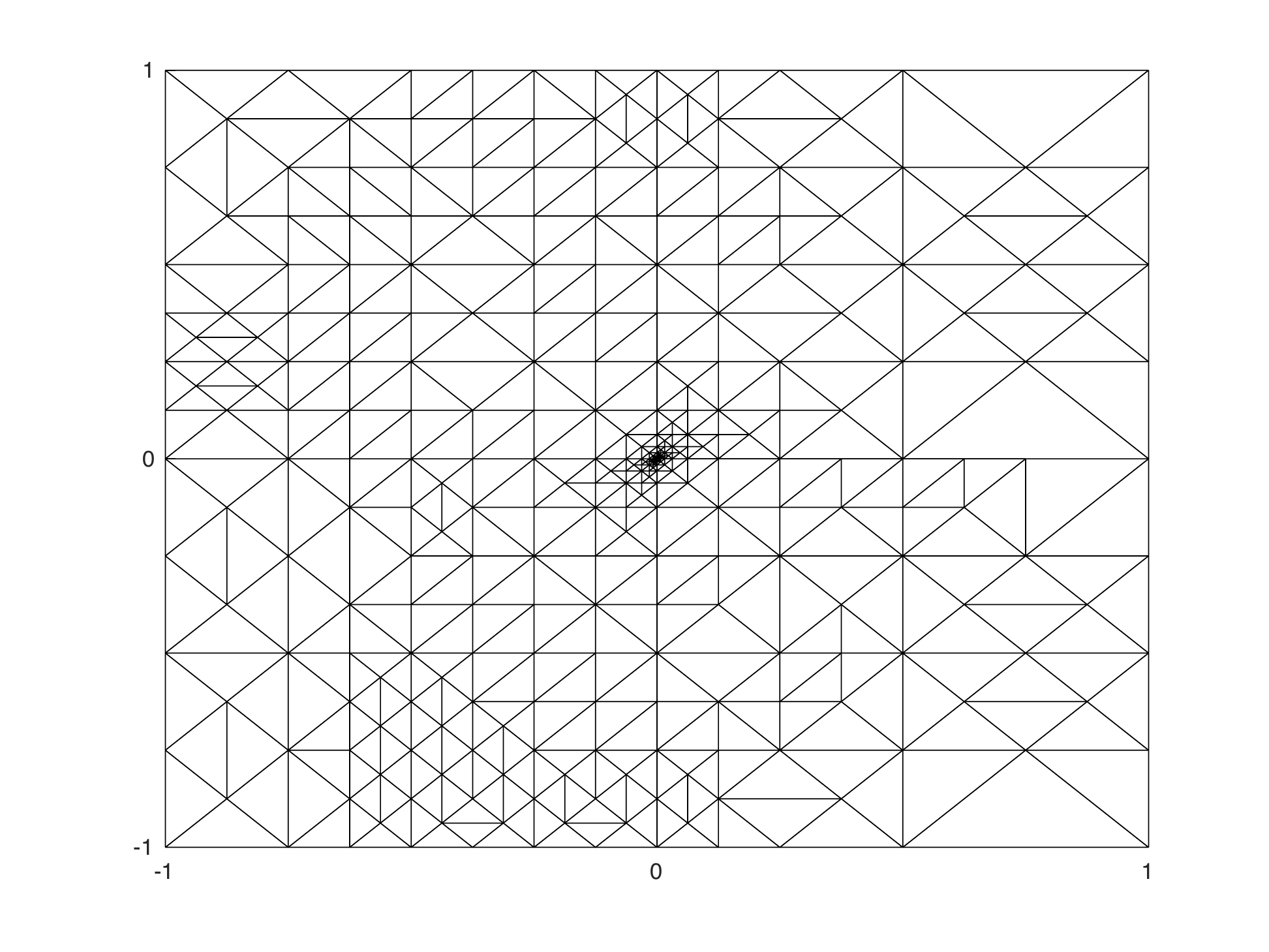}
\label{figSl122cw}
}
\caption[]
{\subref{figSl122aw} Adaptive refined meshes for $\eta_{\ell}^2$ with $375$ nodes, 
\subref{figSl122cw} Adaptive refined meshes for $\mu_{\ell}^2$ with $336$ nodes.}
\label{figexSl21w}
\end{figure}
\begin{figure}[tbp]
\centering
\subfigure[]{
\includegraphics[width = \figurewidthb\textwidth]{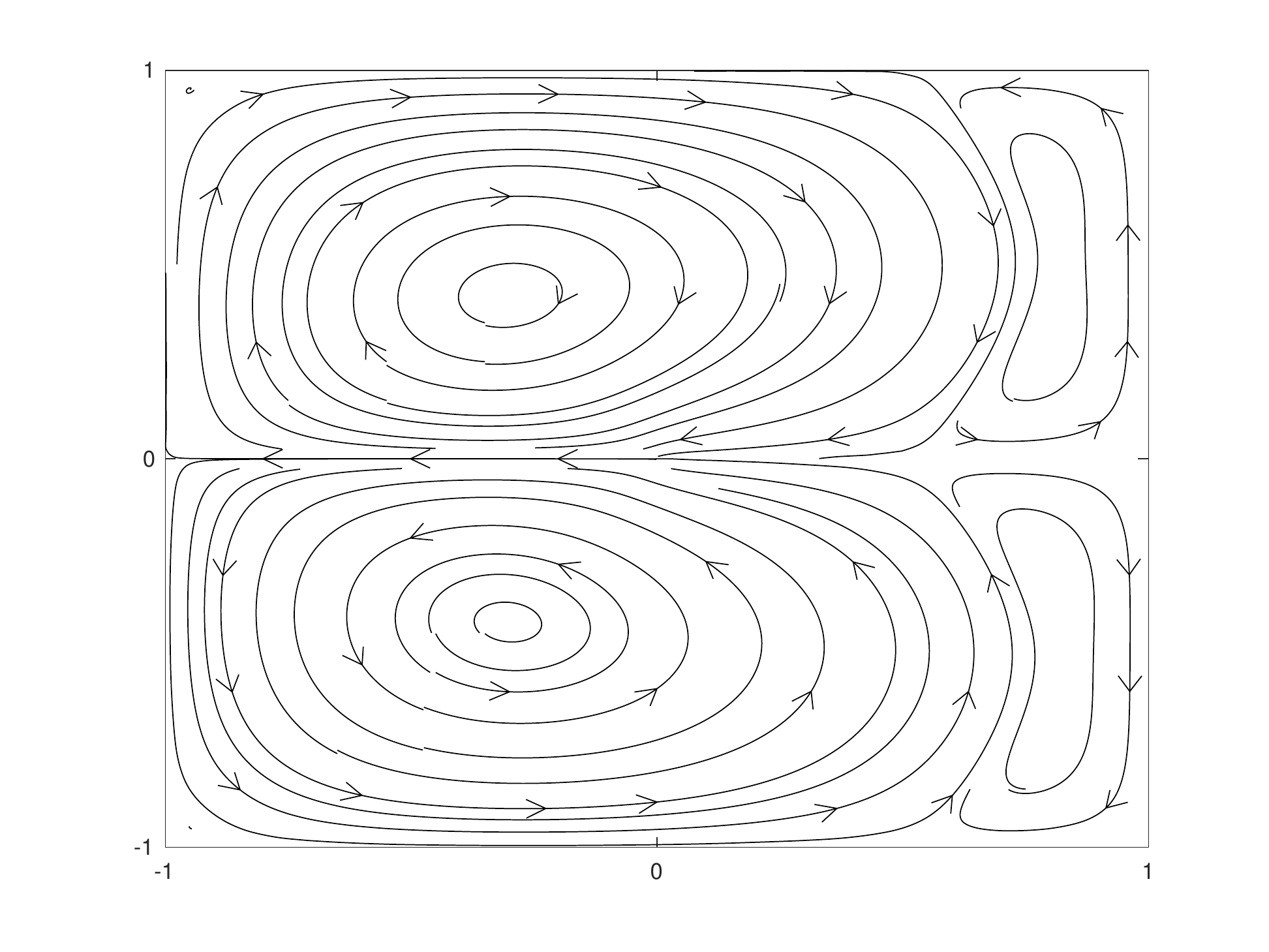}
\label{figSl22a}
}
\subfigure[]{
\includegraphics[width = \figurewidthb\textwidth]{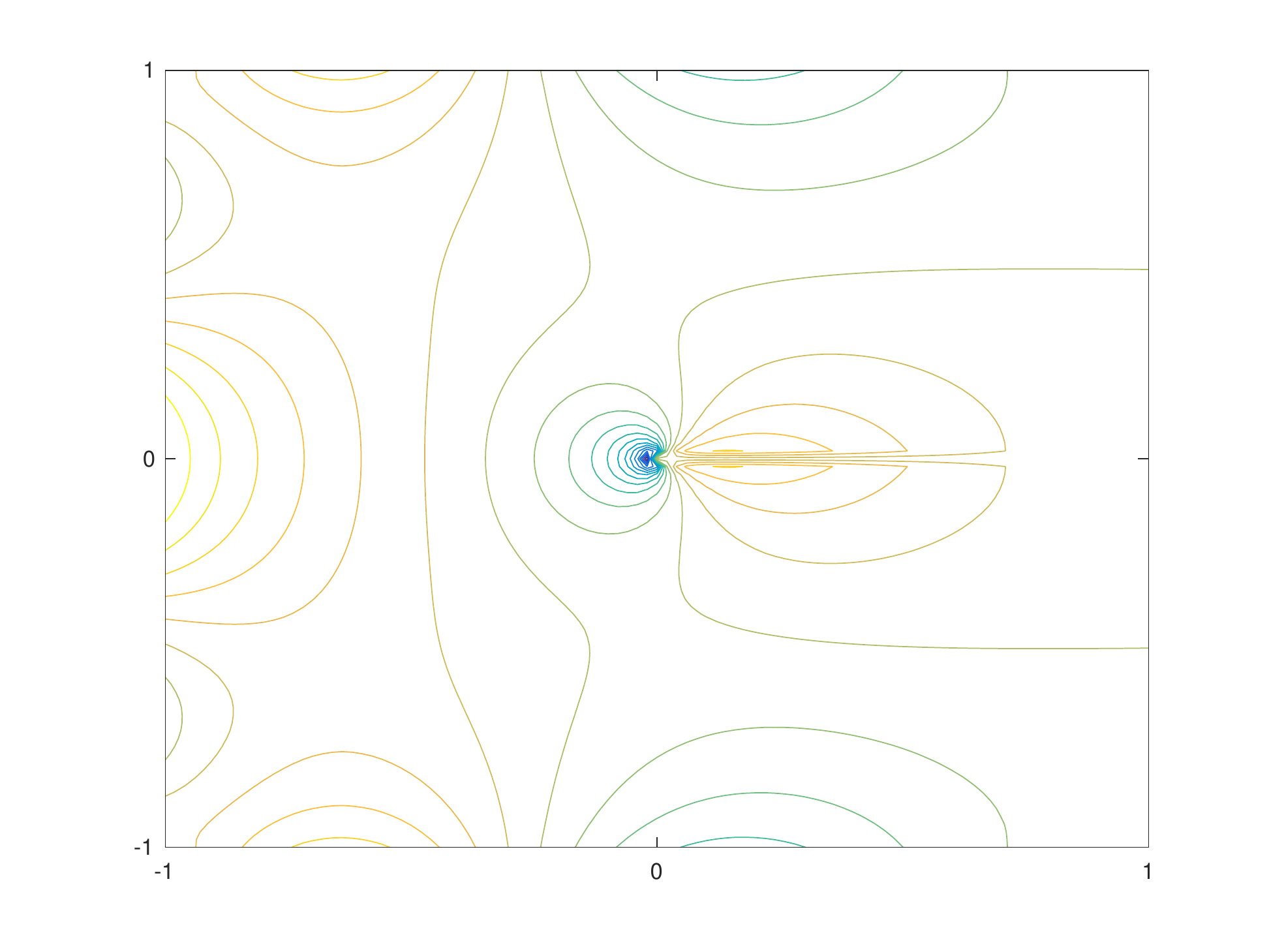}
\label{figSl22c}
}
\caption[]
{\subref{figSl22a}Streamline plot of the discrete eigenfunction $\bm{u}_{\ell}$. 
\subref{figSl22c} Plot of discrete pressure $p_{\ell}=-\mbox{tr}\bm{\sigma}_{\ell}/2$.}
\label{figexSl212}
\end{figure}
Finally,  we consider the slit domain $\Omega=(-1,1)^2\setminus\{0\le x\le 1,y=0\}$,
with maximal re-entrant corner of angle $2\pi$.
We take $\lambda=29.9168629$ as a reference value for the first eigenvalue. 
Again the first eigenfunction is singular.
For uniform meshes in Figure \ref{figSl122a} and \ref{figSl122c}, the convergence results show 
suboptimal convergence $\mathcal{O}(N^{-1/2})$ 
for the eigenvalue errors and error estimators. 
On the contrary, the convergence results, which are based on 
adaptive refinement, achieve optimal convergence $\mathcal{O}(N^{-3})$ 
for the post-processed eigenvalues $\lambda_{\ell}^*$ and $\tilde{\lambda}_{\ell}$, 
and for the a~posteriori error estimators $\eta_{\ell}^2$ and $\mu_{\ell}^2$.
In Figures \ref{figSl122a} and \ref{figSl122c}, the graphs for the 
estimators $\eta_{\ell}^2$ and $\mu_{\ell}^2$ are parallel to the eigenvalue errors 
of the eigenvalues $\lambda_{\ell}^*$ and $\tilde{\lambda}_{\ell}$,
thus it confirms that both error 
estimator are numerically reliable and efficient. 
Note that the efficiency index corresponding to $\eta_{\ell}^2$ 
in Figure \ref{figSl122a} and the efficiency index corresponding $\mu_{\ell}^2$ 
in Figure \ref{figSl122c} are close to one in case of 
adaptive mesh refinement. 
Figures \ref{figSl122aw} and \ref{figSl122cw} display adaptively refined meshes for 
the proposed error estimators $\eta_{\ell}^2$ and $\mu_{\ell}^2$, 
which both show strong refinement toward the re-entrant corner. 
The computed velocity and discrete pressure are shown in Figures \ref{figSl22a} 
and \ref{figSl22c} as a streamline plot on an adaptive mesh. 


\end{document}